\documentclass[reqno,11pt]{amsart} 

\usepackage[top=25truemm,bottom=25truemm,left=25truemm,right=25truemm]{geometry}
\usepackage[bookmarksnumbered,plainpages]{hyperref}
\usepackage{amsmath,amsthm,amssymb,amsfonts,amscd,mathtools,color}
\usepackage{luatex85} 
\usepackage{graphicx,xcolor} 
\usepackage{indentfirst}
\usepackage{here}
\usepackage{inputenc,enumerate,calligra,latexsym}
\usepackage{mathrsfs,bm,dsfont,bbm,extpfeil,textcomp,ascmac,appendix}
\usepackage{tikz,tikz-3dplot,tikz-cd}
\numberwithin{equation}{section}

\theoremstyle{plain}
	\newtheorem{thm}{Theorem}[section]
	\newtheorem*{thm*}{Theorem}
	\newtheorem{cor}[thm]{Corollary}
	\newtheorem{lem}[thm]{Lemma}
	
	\newtheorem{pro}[thm]{Proposition}
	
	\newtheorem*{conj*}{Conjecture}
	
\theoremstyle{definition}
	\newtheorem{dfn}[thm]{Definition}
	\newtheorem{exmp}[thm]{Example}

\theoremstyle{remark}
  \newtheorem{cl}{Claim}

	\newtheorem*{pf}{Proof}

\def\Hom{{\rm Hom}}

\def\Hom{{\rm Hom}}

\def\Ext{{\rm Ext}}

\def\HMF{{\rm HMF}}
\def\Jac{{\rm Jac}}
\def\CM{{\rm CM}}
\def\Cker{{\rm Coker}}
\def\MF{{\rm MF}}

\def\rk{{\rm rk}}

\def\Ob{{\rm Ob}}
\def\syz{{\rm syz}}
\def\gr{{\rm gr}}

\def\C{\mathbb{C}}
\def\Z{\mathbb{Z}}
\newcommand{\mf}[4]{
\begin{tikzcd}[ampersand replacement=\&]
  {#1} \& {#4}
  \arrow["{#2}", shift left, from=1-1, to=1-2]
  \arrow["{#3}", shift left, from=1-2, to=1-1]
\end{tikzcd}
}
\newcommand{\pathTriangle}[4][]{
\path[#1] (#2)--(#3)--(#4)--cycle}
\begin{document}
\title[A category of graded matrix factorizations of a deformed polynomial]%
{A category of graded matrix factorizations of
a deformed polynomial associated to the $A_{\mu}\textrm{-}$singularity}

\author[T. NAKATANI]{Tomoya NAKATANI}
\address{Department of Mathematics and Informatics, 
Graduate School of Science and Engineering, Chiba University,
Yayoicho 1-33, Inage, Chiba, 263-8522 Japan.}
\email{tnakatani@g.math.s.chiba-u.ac.jp}

\begin{abstract}
  We discuss a triangulated category of graded matrix factorizations of 
  a deformed polynomial associated to the $A_{\mu}\textrm{-}$singularity.
  The semi-universal deformation of the $A_{\mu}\textrm{-}$singularity
  is given by a certain deformation of the polynomial of type $A_{\mu}$.
  In this paper, we consider the category of graded matrix factorizations
  associated to this deformed polynomial for a fixed parameter.
  To do so, we introduce a formal variable to make the polynomial homogeneous.
  As our main result, we construct a full strongly exceptional collection
  in this category for a generic parameter. 
\end{abstract}

\maketitle
\thispagestyle{empty}

\setcounter{tocdepth}{1}
\tableofcontents
\section{Introduction}

In his 1994 ICM address \cite{10.1007/978-3-0348-9078-6_11},
Kontsevich proposed \textit{Homological Mirror Symmetry (HMS) conjecture}
as a categorical formulation of mirror symmetry for Calabi-Yau manifolds.
In more detail, the HMS conjecture is the equivalence between the derived category
constructed from the Fukaya category of a Calabi-Yau manifold and 
the derived category of the coherent sheaves on a mirror Calabi-Yau manifold.
The HMS is currently being studied in a broader context beyond Calabi-Yau manifolds,
including Landau-Ginzburg models.

The triangulated categories of (ungraded) matrix factorizations were first
introduced and developed by Eisenbud \cite{eisenbud1980homological} and 
Kn\"{o}rrer \cite{Knorrerperiod} in the study of the maximal
Cohen-Macaulay modules over hypersurfaces. On the other hand,
the categories of matrix factorizations appear in string theory as the categories of 
topological D-branes of type B in Landau-Ginzburg models 
(see \cite{AntonKapustin_2003},\cite{atmp/1112627039}).
In \cite{takahashi2005matrix}, motivated by the work on the categories of 
topological D-branes of type B in Landau-Ginzburg orbifolds by Hori-Walcher \cite{hori2005f},
A. Takahashi introduced a triangulated category
of \textit{graded} matrix factorizations, where the orbifolding corresponds to the grading.
Independently, Orlov defines a triangulated category, called the category of graded 
D-branes of type B, which is in fact equivalent to the one Takahashi introduced.
In \cite{orlov2009derived}, Orlov also established the equivalence between triangulated categories
of graded singularities of hypersurfaces and triangulated categories of graded matrix factorizations.
In relation to their work, Kajiura-Saito-Takahashi introduced 
the category of graded matrix factorizations $\HMF^{gr}_{S}(f)$ of a weighted homogeneous polynomial $f \in S$
by using the Orlov's construction with a slight modification of the scaling of degrees in \cite{kajiura2007matrix}. 
In their paper, for a polynomial $f$ of type $ADE$, they showed that $\HMF^{gr}_{S}(f)$ is equivalent to 
the derived category of modules over the path algebra of the Dynkin quiver of the same type as $f$.

For example, we recall the case that a polynomial $f$ is of type $A_{\mu}$.
The category $\HMF^{gr}_{S}(f)$ is completely reconstructed from the \textit{Auslander-Reiten (AR)}
quiver of the Dynkin quiver of type $A_{\mu}$ described in Figure~\ref{ARquiveforA}.
In fact, the category $\HMF^{gr}_{S}(f)$ admits a full strongly exceptional collection 
$(E_1,\dots,E_{\mu})$ as in Figure~\ref{ARquiveforA} and the \textit{Serre functor} $\mathcal{S}$.
The exsitence of the Serre functor implies that there are no morphisms from $E$ to a object
outside the rectangle whose opposite vertices are given by $E$ and $\mathcal{S}E$ in Figure~\ref{ARquiveforA}.

\begin{figure}[H]
  \begin{tikzpicture}
    \pathTriangle[fill=white]{0,0}{3,3}{6,0};
    \pathTriangle[fill=white]{4,3}{7,0}{10,3};
    \draw[fill=gray!20!white] (3.5,0.5)--(4,0)--(6.5,2.5)--(6,3)--cycle;
    \draw (0,0)[below left,font=\small] node{$E_1$};
    \draw (0.5,0.5)[left,font=\small] node{$E_2$};
    \draw (2.5,2.5)[left,font=\small] node{$E_{\mu-1}$};
    \draw (3,3)[above,font=\small] node{$E_{\mu}$};
    \draw (3.5,0.5) [below left,font=\small] node{$E$};
    \draw (6.5,2.5) [above right,font=\small] node{$\mathcal{S}E$};
    \foreach \x/\y in{0/0,1/0,2/0,0.5/0.5,1.5/0.5,1/1} \fill (\x,\y) circle[radius=0.5mm];
    \foreach \x in{0,0.5} \draw[->,>=latex] (\x,\x) to (\x+0.5,\x+0.5);
    \foreach \x in{0.5,1} \draw[->,>=latex] (\x,\x) to (\x+0.5,\x-0.5);
    \draw[->,>=latex] (1,0) to (1.5,0.5);
    \draw[->,>=latex] (1.5,0.5) to (2,0);
    \foreach \x/\y in{0/0,1/0,2/0,0.5/0.5,1.5/0.5,1/1} \fill (\x+2,\y+2) circle[radius=0.5mm];
    \foreach \x in{2,2.5} \draw[->,>=latex] (\x,\x) to (\x+0.5,\x+0.5);
    \foreach \x in{2.5,3} \draw[->,>=latex] (\x,\x) to (\x+0.5,\x-0.5);
    \draw[->,>=latex] (3,2) to (3.5,2.5);
    \draw[->,>=latex] (3.5,2.5) to (4,2);
    \foreach \x/\y in{0/0,1/0,2/0,0.5/0.5,1.5/0.5,1/1} \fill (\x+4,\y) circle[radius=0.5mm];
    \draw[->,>=latex] (4,0) to (4.5,0.5);
    \draw[->,>=latex] (4.5,0.5) to (5,1);
    \draw[->,>=latex] (4.5,0.5) to (5,0);
    \draw[->,>=latex] (5,1) to (5.5,0.5);
    \draw[->,>=latex] (5,0) to (5.5,0.5);
    \draw[->,>=latex] (5.5,0.5) to (6,0);
    \draw[dashed] (1,1) -- (2,2);
    \draw[dashed] (4,2) -- (5,1);
    \draw[dashed] (1.5,0.5) -- (3,2);
    \draw[dashed] (3,2) -- (4.5,0.5);
    \draw (-0.5,0) -- (8,0);
    \draw (-0.5,3) -- (8,3);
    \fill (3.5,0.5) circle[radius=0.5mm];
    \fill (6.5,2.5) circle[radius=0.5mm];
    \draw[->,>=latex] (1,1) to (1.5,1.5);
    \draw[->,>=latex] (1.5,0.5) to (2,1);
    \draw[->,>=latex] (3,2) to (3.25,1.75);
    \draw[->,>=latex] (4,2) to (4.25,1.75);
  \end{tikzpicture}
  \caption{The AR quiver and the Serre functor describing the category $\HMF^{gr}_{S}(f)$.}
  \label{ARquiveforA}
\end{figure}

Classical mirror symmetry is formulated as an isomorphism between 
Frobenius structures arising from Gromov-Witten theory and deformation theory, respectively.
It is expected that the HMS reproduces the classical mirror symmetry.
More precisely, a caterory is believed to reproduce a Frobenius manifold as a space of
deformations of the category with suitable structure on it.
However, there is no formulation of deformations of categories which reproduces Frobenius manifolds. 

Motivated by this expectation, we aim to understand how to formulate deformations of categories.
In this paper, we discuss the triangulated category of graded matrix factorizations
of a deformation of a polynomial $f(x,y,z)$ of type $A_{\mu}$ as a first step.
More precisely, we construct a family of categories of graded matrix factorizations,
which is parametrized by $\C^{\mu}$, associated to a holomorphic function
\[\tilde{f}(x,y,z,t_1,\dots,t_{\mu}): \C^{3}\times \C^{\mu} \rightarrow \C\]
which gives the \textit{semi-universal deformation} of a hypersurface: $V_0=\{f(x,y,z)=0\}$. 
In \cite{kas1972versal}, A. Kas and Schlessinger constructed the semi-universal deformation
of a complete intersection $V_0$ with an isolated singularity: 
\textit{deformation} means a flat map $\pi:\mathcal{X}\rightarrow B$ for which 
$\pi^{-1}(0)=V_0$ and a map $\pi$ is \textit{semi-universal} if any other deformation is induced from $\pi$.
For an isolated hypersurface singularity, the semi-universal deformation is ginen by 
\[ \mathcal{X}=\Big\{ \tilde{f}(x,y,z,t_1,\dots,t_{\mu})=f(x,y,z)+ \sum_{i=1}^{\mu}t_iP_i(x,y,z)=0\Big\}\]
and a projection $\pi:\C^{3} \times \C^{\mu}\rightarrow \C^{\mu}$ where the $P_i$ determine
a $\C$-basis of the \textit{Jacobi ring}
\[ \Jac(f):=\C[x,y,z]\Big/ \Big(\frac{\partial f}{\partial x},\frac{\partial f}{\partial y},
\frac{\partial f}{\partial z}\Big)\]
and $\mu$ is the Milnor number of $f$.
We would like to consider the category of graded matrix factorizations associated to $\tilde{f}$.
However, the holomorphic function $\tilde{f}$ is not weighted homogeneous with respect to
the variables $x,y$ and $z$.
Then, we introduce a formal variable $q$ to make the holomorphic function $\tilde{f}$ homogeneous.
For a polynomial $f(x,y,z)$ of type $A_{\mu}$ and a fixed deformation parameter $(t_1,\dots,t_{\mu})$,
we consider the category of graded matrix factorizations of the weighted homogeneous polynomial given by
\[\tilde{f}_q(x,y,z,q) = x^{\mu+1} + yz + \sum_{i=1}^{\mu} t_i x^{\mu-i} q^{i+1}. \]
with respect to the variables $x,y,z$ and $q$ instead of the holomorphic function $\tilde{f}$.

Our main theorem (Theorem~\ref{mainthm}) is the following.

\begin{thm}
  Let $f \in \C[x,y,z]$ be a polynomial of type $A_{\mu}$.
  For a \textit{generic} (Definition~\ref{genericpara}) parameter,
  the category of graded matrix factorizations of $\tilde{f_q}$ 
  admits a full strongly exceptional collection 
  $(E_1,\dots,E_{\mu},E_{\mu+1},...,E_{2\mu})$
  consisting of graded matrix factorizations of rank $1$.
\end{thm}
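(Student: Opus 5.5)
We work with $\tilde f_q=x^{\mu+1}+yz+\sum_i t_i x^{\mu-i}q^{i+1}$, where $x$ and $q$ have degree $1$ and $\deg y+\deg z=\mu+1$. Viewed as a binary form in $x,q$, the polynomial $g(x,q):=x^{\mu+1}+\sum_i t_i x^{\mu-i}q^{i+1}$ has degree $\mu+1$. We take the parameter generic in the sense of Definition~\ref{genericpara}, which should make the $\mu+1$ linear factors of $g$ pairwise distinct: $g=\prod_{j=0}^{\mu}(x-\alpha_j q)$. The first step is Kn\"orrer periodicity (in its graded form), which removes the quadratic part $yz$ and reduces the problem to graded matrix factorizations of $g\in\C[x,q]$. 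Geometrically, $\{g=0\}\subset\mathbb{P}^1$ is then $\mu+1$ reduced points. Orlov's theorem relates $\HMF^{gr}(g)$ to $D^b(\mathrm{coh}\,Z)$ with $Z$ these points, adjusted by the Gorenstein parameter: here $a=2-(\mu+1)<0$, so there is a semiorthogonal decomposition
\[
D^b(Z)=\langle \mathcal{O}(-a-1),\dots,\mathcal{O},\ \HMF^{gr}(g)\rangle
\]
(up to the precise form). This is only a heuristic count for the size of the collection, since the actual count must come from the Hom computations below.

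The candidate objects are the rank one factorizations $E_{j}=\bigl(x-\alpha_jq,\ \prod_{k\ne j}(x-\alpha_kq)\bigr)$, and more generally $(h,g/h)$ for products $h$ of the linear factors, together with their grading shifts. After Kn\"orrer these become rank one factorizations of $\tilde f_q$: $\bigl(\begin{smallmatrix} h & y\\ -z & g/h\end{smallmatrix}\bigr)$, or equivalently rank one factorizations such as $(h+ y, \dots)$. Concretely, I will order $2\mu$ objects as follows. The first $\mu$ are built from the chain $h_i=\prod_{j< i}(x-\alpha_jq)$. The remaining $\mu$ are suitable grading shifts of these, mimicking the two slices of the AR quiver in Figure~\ref{ARquiveforA}; the deformation should split the original $A_\mu$ collection into twice as many pieces.

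The Hom spaces are then computed explicitly. For rank one factorizations $(a,b)$ and $(c,d)$ of $g$, the morphism spaces are $H^0$ and $H^1$ of a $2$-periodic complex of $\C[x,q]$-modules. These reduce to modules of the form $\C[x,q]/(\gcd(a,c),\dots)$, taken in a fixed graded degree. Because the roots are distinct, these are finite-dimensional pieces of $\C[x,q]/(\ell_1\cdots\ell_r)$, and they can be evaluated degree by degree. From this I would verify three things:
\begin{itemize}
\item[(i)] $\Hom(E_i,E_i[n])=\C$ for $n=0$ and vanishes otherwise;
\item[(ii)] $\Hom(E_i,E_j[n])=0$ for $i>j$ and all $n$;
\item[(iii)] for $i<j$, the morphisms are concentrated in degree $0$.
\end{itemize}
Genericity enters here, because coincidences among the $\alpha_j$ would create extra morphisms.

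The main obstacle is fullness. My approach is to show that the triangulated subcategory generated by the $E_i$ contains every object. First, cones of morphisms between the $E_i$ produce all $(h,g/h)$ for every divisor $h$ of $g$, together with their grading shifts. Then I would invoke the fact that the graded stable category is generated by the twists of the residue field $k^{\mathrm{stab}}$ (Orlov). Finally, I would check that the stabilized residue field, $(x,q)$ Koszul-type, is built by iterated cones from the $E_i$. As a cross-check on the count, the Grothendieck group rank should be $2\mu$, which can be read off from the decomposition above, from $\mu+1$ points minus a correction of one times a shift count.
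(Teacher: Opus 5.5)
Your plan follows the paper's route: Kn\"orrer reduction to $g\in\C[x,q]$, the rank-one objects $\overline{F_I}=(\ell_I,\ell_{I^c})$ with $\ell_I=\prod_{i\in I}(x+s_iq)$ along a chain $I_1\subsetneq\cdots\subsetneq I_\mu$ plus a shifted copy, direct Hom computations, and fullness via $\tau$-stability and $(R/\mathfrak{m})^{stab}$. Your Hom description is also right: $\HMF^{gr}_S(\overline{F_I},\tau^n\overline{F_J})$ is one graded piece of $S/(\ell_{I\cap J},\ell_{(I\cup J)^c})$, finite-dimensional because genericity makes the two products coprime.

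\textbf{The gap.} The genuine gap is the step you yourself call the main obstacle. The sentence ``cones of morphisms between the $E_i$ produce all $(h,g/h)$ \dots\ together with their grading shifts'' is asserted, not argued, and it is the entire content of fullness. The collection contains only the $\overline{F_{I_k}}$ and one shift of each. Until you show that every $\tau^l\overline{F_I}$ is reached, $\langle\mathcal{E}\rangle$ is not known to be $\tau$-stable, and the twists of $k^{stab}$ are not known to lie in it.

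The paper does this by explicit bookkeeping. It uses $C(\Phi_I^J)\simeq\tau^{|I|}\overline{F_{J\setminus I}}$ for $I\subsetneq J$, the analogous cone of $\overline{\Phi}_I^J$ for $I\supsetneq J$, and $T\overline{F_I}\simeq\tau^{|I|}\overline{F_{I^c}}$. From these it obtains successively $\tau\overline{F_{\{k\}^c}}$, then $\tau^{\mu}\overline{F_{\{k\}}}$, then $\tau^m\overline{F_{\{k\}}}$, then all $\overline{F_{I^c}}$, and finally $\tau^l\overline{F_I}$ for all $I$ and $0\le l\le\mu$. Together with $T^2=\tau^{\mu+1}$ this gives $\tau$-stability. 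Only then is $k^{stab}$ available with all its twists, as a shift of $C\bigl((q,q)\colon\overline{F_{\{i\}}}\to\tau\overline{F_{\{i\}}}\bigr)$.

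Two related points need attention.
\begin{itemize}
\item Generation by the twists of $k^{stab}$ is a thick-closure statement and needs the singularity to be isolated, which is a second place where genericity enters. So you must use that $\langle\mathcal{E}\rangle$ is right admissible, as in the Kajiura--Saito--Takahashi criterion the paper invokes.
\item ``Suitable grading shifts'' must be pinned down. Since $\HMF^{gr}_S(\overline{F_I},\tau^n\overline{F_I})\neq0$ exactly for $0\le n\le\mu-1$, vanishing of $\HMF^{gr}_S(\tau^s\overline{F_{I_k}},T^{2l}\overline{F_{I_k}})$ for all $l$ forces $s\equiv1\pmod{\mu+1}$. So the second block is $\tau\overline{F_{I_k}}$, as in the paper.
\end{itemize}

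\textbf{Two incorrect assertions.} First, your Orlov heuristic has the decomposition backwards. The Gorenstein parameter $a=1-\mu<0$ is the general-type case, where Orlov gives $\HMF^{gr}_S(g)=\langle\mu-1\text{ consecutive twists of }k^{stab},\ D^b(Z)\rangle$, so $\rk K_0=(\mu-1)+(\mu+1)=2\mu$. The decomposition as you wrote it would force $\rk K_0=2$, contradicting your own count. Correctly oriented, it would even offer a shortcut to fullness: it would suffice to produce $\mu-1$ consecutive twists of $k^{stab}$ together with correspondingly twisted images of the $\mu+1$ skyscraper sheaves of $Z$. Those images come from the modules $R/(x+s_jq)$, so they are shifts of the $\overline{F_{\{j\}}}$.

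Second, $g+yz$ is irreducible, so it has no nontrivial rank-one factorizations. The object $\bigl(\begin{smallmatrix} h & y\\ -z & g/h\end{smallmatrix}\bigr)$ has rank $2$. ``Rank one'' in the theorem refers to the Kn\"orrer-reduced category over $\C[x,q]$, and you should work there throughout.
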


\noindent In Figure~\ref{deformation}, we describe the full strongly exceptional collection 
in Theorem~\ref{mainthm} and the Serre functor.
In the category of graded matrix factorizations of $\tilde{f_q}$,
all graded matrix factorizations of rank $1$ forms the similar structure as in Figure~\ref{ARquiveforA}.
However, in contrast to the case of $ADE$ singularities,
starting from graded matrix factorizations of rank $1$, we obtain infinitely many indecomposable
graded matrix factorizations of higher rank by translations and taking mapping cones.

\begin{figure}[H]
  \begin{tikzpicture}
    \pathTriangle[fill=white]{0,0}{3,3}{6,0};
    \pathTriangle[fill=white]{4,3}{7,0}{10,3};
    \draw[fill=gray!20!white] (2,1)--(3,0)--(7,0)--(8,1)--(6,3)--(4,3)--cycle;
    \draw (0,0)[below,font=\small] node{$E_1$};
    \draw (1,0)[below,font=\small] node{$E_{\mu+1}$};
    \draw (3,3)[above,font=\small] node{$E_{\mu}$};
    \draw (4,3)[above,font=\small] node{$E_{2\mu}$};
    \draw (2,1) [left,font=\small] node{$E$};
    \draw (8,1) [right,font=\small] node{$\mathcal{S}E$};
    \foreach \x/\y in{0/0,1/0,2/0,0.5/0.5,1.5/0.5,1/1} \fill (\x,\y) circle[radius=0.5mm];
    \foreach \x in{0,0.5} \draw[->,>=latex] (\x,\x) to (\x+0.5,\x+0.5);
    \foreach \x in{0.5,1} \draw[->,>=latex] (\x,\x) to (\x+0.5,\x-0.5);
    \draw[->,>=latex] (1,0) to (1.5,0.5);
    \draw[->,>=latex] (1.5,0.5) to (2,0);
    \foreach \x/\y in{0/0,1/0,2/0,0.5/0.5,1.5/0.5,1/1} \fill (\x+2,\y+2) circle[radius=0.5mm];
    \foreach \x in{2,2.5} \draw[->,>=latex] (\x,\x) to (\x+0.5,\x+0.5);
    \foreach \x in{2.5,3} \draw[->,>=latex] (\x,\x) to (\x+0.5,\x-0.5);
    \draw[->,>=latex] (3,2) to (3.5,2.5);
    \draw[->,>=latex] (3.5,2.5) to (4,2);
    \foreach \x/\y in{0/0,1/0,2/0,0.5/0.5,1.5/0.5,1/1} \fill (\x+4,\y) circle[radius=0.5mm];
    \draw[->,>=latex] (4,0) to (4.5,0.5);
    \draw[->,>=latex] (4.5,0.5) to (5,1);
    \draw[->,>=latex] (4.5,0.5) to (5,0);
    \draw[->,>=latex] (5,1) to (5.5,0.5);
    \draw[->,>=latex] (5,0) to (5.5,0.5);
    \draw[->,>=latex] (5.5,0.5) to (6,0);
    \draw[dashed] (1,1) -- (2,2);
    \draw[dashed] (4,2) -- (5,1);
    \draw[dashed] (1.5,0.5) -- (3,2);
    \draw[dashed] (3,2) -- (4.5,0.5);
    \draw (-0.5,0) -- (8,0);
    \draw (-0.5,3) -- (8,3);
    \fill (2,1) circle[radius=0.5mm];
    \fill (8,1) circle[radius=0.5mm];
    \fill (4,3) circle[radius=0.5mm];
    \draw[->,>=latex] (3.5,2.5) to (4,3);
    \draw[->,>=latex] (1,1) to (1.5,1.5);
    \draw[->,>=latex] (3,2) to (3.25,1.75);
    \draw[->,>=latex] (4,2) to (4.25,1.75);
  \end{tikzpicture}
  \caption{The structure of all graded matrix factorizations of rank $1$ and the Serre functor.}
  \label{deformation}
\end{figure}

For a weighted homogeneous polynomial, the category of graded matrix factorizations 
is triangulated equivalent to the category of graded maximal Cohen-Macaulay modules 
over the corresponding hypersurface.
In \cite{iyama2013tilting}, Iyama-R. Takahashi showed that categories of graded maximal Cohen-Macaulay modules
over isolated hypersurface singularities admit the Serre functor in the sense of Bondal-Kapranov \cite{bondal1990representable}.
Although the Serre functor on the category of graded matrix factorizations of $\tilde{f_q}$
follows from Theorem 1.5 in \cite{iyama2013tilting}, 
we provide a direct proof using the method due to Bondal \cite{Bondalrep}
as a corollary of our main theorem.

There remain two problems in our study.
For a generic parameter, the holomorphic function $\tilde{f}$ has the following properties:
\begin{itemize}
  \item The holomorphic function $\tilde{f}$ has $\mu$ distinct critical points whose critical values
  are also distinct.
  \item At each critical point, the holomorphic function $\tilde{f}$ defines the 
  $A_{1}\textrm{-}$singularity.
\end{itemize}
The first problem is how to understand these geometric properties from the viewpoint of 
a deformation of the category of graded matrix factorizations.
The second problem is to construct a generator
of the category of graded matrix factorizations of $\tilde{f_q}$
for a \textit{non generic} parameter.
In this case, we need to consider categories of graded matrix factorizations
for non-isolated hypersurface singularities.
We leave these problems for future work.

\subsection{Strategy of proof}
Our strategy of the proof of Theorem~\ref{mainthm} is based on 
the \textit{category generating lemma} which is due to \cite{kajiura2009triangulated}
(see Theorem 4.5).  
Our proof consists of two parts. The first part is checking that 
the collection is a strongly exceptional collection. 
For this purpose, we introduce the notion of \textit{phase} of graded matrix factorizations
and determine the space of morphisms between two graded matrix factorizations of rank $1$ explicitly.
The second part is applying Theorem~\ref{categorygenerating} to the collection.
We need to check that the full triangulated subcategory generated by the collection is closed
under the autoequivalence functor called \textit{grading shift} and contains some special objects.

\subsection{Structure of this paper}
In Section~\ref{prel}, we review the categories of graded matrix factorizations
for weighted homogeneous polynomials and the category of graded maximal
Cohen-Macaulay modules over hypersurfaces.
In Section~\ref{cgt}, after recalling the notion of exceptional collections
on a triangulated category, we review the generation of categories of graded
matrix factorizations which is necessary to prove Theorem~\ref{mainthm}.
In Section~\ref{mainr}, we consider the category of graded matrix factorizations
associated with the semi-universal deformation of the $A_\mu$-singularity
and state our main results.  
Section~\ref{prf} is devoted to proving Theorem~\ref{mainthm}.

\subsection{Acknowledgements.}
I am deeply grateful to my supervisor Professor Hiroshige Kajiura for 
valuable comments. I would like to thank Professor Koji Nishida
for his helpful discussions on the topic of maximal Cohen-Macaulay modules. 
The author is supported by JST SPRING, Grant Number JPMJSP2109.
\section{Preliminaries}\label{prel}

In this section, we set up several definitions which are used in the present paper.
For a positive integer $h$, let $(R, \mathfrak{m})$ be a commutative Noetherian 
$(2\Z/h)$-graded local ring of the Krull dimension $d$ with $R_0=\C$ and the
maximal ideal $\mathfrak{m}:=\oplus_{t\in\frac{2}{h}\Z}R_t$. 
By a graded $R$-module, we always mean a graded $R$-module $M$ which decomposes into the 
direct sum $M=\oplus_{t\in\frac{2}{h}\Z}M_t$. For two graded $R$-modules $M$ and $N$,
a graded $R$-homomorphism $\phi$ of degree $t \in 2\Z/h$ is an $R$-homomorphism
$\phi:M \rightarrow N$ such that $\phi(M_s) \subset N_{s+t}$ for any $s \in 2\Z/h$.

\begin{dfn}
  We denote by $\gr\text{-}R$ the category of finitely generated graded $R$-modules,
  in which morphisms are $R$-homomorphisms of degree zero. The degree shift of 
  $M \in \gr\text{-}R$, denoted by $\tau M$, is defined by $(\tau M)_{s}:=M_{s+\frac{2}{h}}$. 
  This $\tau$ induces an auto-equivalence functor on $\gr\text{-}R$, which we denote by
  the same symbol $\tau$.
\end{dfn}

For two graded $R$-modules $M,N$, we have 
\begin{equation}\label{gradedtoungradedExt}
  \Ext^{i}_R(M,N)\simeq \oplus_{n\in \Z}\Ext^{i}_{\gr\text{-}R}(\tau^{-n}M,N)\simeq 
  \oplus_{n\in \Z}\Ext^{i}_{\gr\text{-}R}(M,\tau^{n}N)
\end{equation}
since $R$ is noetherian. In particular, for $i=0$, 
\begin{equation}\label{gradedtoungradedHom}
  \Hom_R(M,N)\simeq \oplus_{n\in \Z}\Hom_{\gr\text{-}R}(\tau^{-n}M,N)\simeq 
  \oplus_{n\in \Z}\Hom_{\gr\text{-}R}(M,\tau^{n}N)
\end{equation}
forms a graded $R$-module, where the grading of each homogeneous piece is defined 
as $2n/h$. We also have that any graded projective module is free since $R$ is
finitely generated over $R_0=\C$.

\begin{dfn}
  For a graded $R$-module $M \in \gr\text{-}R$, we consider a long exact sequence
  \[0\rightarrow N \rightarrow F_{n-1} \rightarrow F_{n-2} \rightarrow \cdots
  \rightarrow F_1 \rightarrow F_0 \rightarrow M\rightarrow 0\]
  in $\gr\text{-}R$ where each $F_i$ is graded free. \textit{The reduced $n$-th syzygy} 
  $\syz^{n}(M)$ of $M$ is the graded $R$-module obtained from $N$ by deleting 
  all graded free summands. 
\end{dfn}

The reduced $n$-th syzygy $\syz^{n}(M)$ is uniquely determined by $M$ and $n$
up to isomorphism.

\subsection{Category of graded maximal Cohen-Macaulay modules.}

In this subsection, we recall some notions of the category of graded
maximal Cohen-Macaulay modules over a graded Gorenstein local ring $R$.
We refer to \cite{Matsumura1987} and \cite{yoshino1990maximal} for terminologies
and the statements presented here.

\begin{dfn}
  An object $M\in \gr\text{-}R$ is called a \textit{graded maximal Cohen-Macaulay module} over $R$
  if $\Ext^{i}_R(R/\mathfrak{m},M)=0$ for $i<d$.
  We denote by $\CM^{gr}(R)$ the full subcategory of $\gr\text{-}R$ consisting of
  all graded maximal Cohen-Macaulay modules which forms an extension-closed subcategory.
\end{dfn}

An object $K_R \in \gr\text{-}R$ is called a \textit{canonical module} of $R$
if $\Ext^{i}_R(R/\mathfrak{m},K_R)=0$ for $i \neq d$ and
$\Ext^{d}_R(R/\mathfrak{m},K_R) \simeq \C$.
A cononical module need not exist, but if it dose, it is unique up to isomorphism.

\begin{dfn}
  For a graded Gorenstein local ring $R$, the cononical module $K_R$ is isomorphic
  to $\tau^{-\varepsilon(R)}R$ for some $\varepsilon(R) \in \Z$. The integer $\varepsilon(R)$
  is called the \textit{Gorenstein parameter} of $R$.
\end{dfn}

\begin{lem}
  The category $\CM^{gr}(R)$ is a Frobenius category, namely,
  it has enough projectives and enough injectives and the projectives
  coincide with the injectives.
\end{lem}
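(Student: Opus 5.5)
We have to show that $\CM^{gr}(R)$ has enough projectives and enough injectives, and that the two classes of objects coincide. The plan is to show that the graded free modules $\tau^{n}R$, and their finite direct sums, are exactly the projective objects and also exactly the injective objects, and then to build the needed resolutions inside $\CM^{gr}(R)$. Extension-closedness was already noted in the definition. The standing hypothesis is that $R$ is Gorenstein, so $K_R\simeq\tau^{-\varepsilon(R)}R$.

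\emph{Projectives.} Since $\Ext^{i}_R(R/\mathfrak{m},R)=0$ for $i<d$, the module $R$ is maximal Cohen--Macaulay, and so is every $\tau^nR$. These are projective in $\gr\text{-}R$, hence projective in the exact category $\CM^{gr}(R)$. Given $M\in\CM^{gr}(R)$, take a graded free cover $0\to N\to F\to M\to 0$. The depth lemma, or the long exact sequence of $\Ext_R(R/\mathfrak{m},-)$, gives $\Ext^i_R(R/\mathfrak{m},N)=0$ for $i<d$, so $N\in\CM^{gr}(R)$. This gives enough projectives. Conversely, every projective $P$ of $\CM^{gr}(R)$ is a direct summand of such an $F$. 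It is therefore graded projective, hence free, since graded projective modules are free here.

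\emph{Injectives.} The key input is local duality for the Gorenstein ring $R$: for $M$ maximal Cohen--Macaulay, $\Ext^{i}_R(M,K_R)=0$ for $i>0$. Since $K_R$ is a shift of $R$, this gives $\Ext^i_R(M,R)=0$ for $i>0$. By \eqref{gradedtoungradedExt} the graded pieces vanish as well, so $\Ext^1_{\gr\text{-}R}(M,\tau^nR)=0$, and every $\tau^nR$ is injective in $\CM^{gr}(R)$. For enough injectives we use the duality $M^*=\Hom_R(M,R)$, which is graded by \eqref{gradedtoungradedHom}. It preserves $\CM^{gr}(R)$ and satisfies $M\simeq M^{**}$; both facts follow from the same vanishing $\Ext^{>0}_R(M,R)=0$ together with the canonical-module duality $\Hom_R(-,K_R)$. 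Take a free cover $0\to N'\to F'\to M^*\to 0$ in $\CM^{gr}(R)$ and dualize. Because $\Ext^1_R(M^*,R)=0$, we obtain an exact sequence $0\to M\to F'^*\to N'^*\to 0$ with $F'^*$ free and $N'^*\in\CM^{gr}(R)$. Finally, an injective $I$ embeds into a free module $F'^*$ in this way, and the inclusion splits, so $I$ is free. Hence the injectives coincide with the projectives, namely the graded free modules.

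The main obstacle is the duality step: proving $\Ext^{>0}_R(M,K_R)=0$ and $M\simeq M^{**}$ for graded maximal Cohen--Macaulay $M$. I would quote it from \cite{yoshino1990maximal} or \cite{Matsumura1987}, using the graded local setting with $R_0=\C$. Alternatively one can argue by induction on $d$, cutting down by a homogeneous $M$-regular and $R$-regular element of $\mathfrak{m}$; the case $d=0$ is the self-injectivity of an Artinian Gorenstein ring.
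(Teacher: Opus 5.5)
Your proof is correct, modulo the two standard duality facts you flag: $\Ext^{>0}_R(M,K_R)=0$, and the fact that maximal Cohen--Macaulay modules are reflexive with maximal Cohen--Macaulay duals. The paper gives no proof of this lemma; it records it as standard with a blanket reference to Matsumura and Yoshino, and your argument is the standard one behind that citation (free covers and the depth lemma for projectives, $K_R\simeq\tau^{-\varepsilon(R)}R$ for injectivity of the $\tau^nR$, and the duality $M\mapsto M^*$ for enough injectives and to show injectives are free).
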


\begin{dfn}
  We define an additive category $\underline{\CM^{gr}}(R)$ as follows: 
  its objects are graded maximal Cohen-Macaulay module over $R$ and,
  for any $M,N \in \underline{\CM^{gr}}(R)$,
  the space of morphisms $\underline{\Hom}_{\gr\text{-}R}(M,N)$ is given by 
  $\Hom_{\gr\text{-}R}(M,N)/\mathcal{P}(M,N)$,
  where $\mathcal{P}(M,N)$ is the subspace consisting of morphisms factoring through 
  projectives, namely, $g\in \mathcal{P}(M,N)$ 
  if and only if $g=g'' g'$ for $g':M \rightarrow P$ and
  $g'':P \rightarrow N$ with a projective object $P$.
\end{dfn}

Since $\underline{\CM^{gr}}(R)$ is the stable category of the Frobenius category
$\CM^{gr}(R)$, due to Happel \cite{happel1988triangulated}, one obtains that: 

\begin{pro}
  The stable category $\underline{\CM^{gr}}(R)$ forms a triangulated category.
\end{pro}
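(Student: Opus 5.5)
The plan is to apply Happel's theorem on stable categories of Frobenius categories, using the preceding lemma that $\CM^{gr}(R)$ is Frobenius. Since $\CM^{gr}(R)$ is extension-closed in the abelian category $\gr\text{-}R$, it is an exact category, with conflations the short exact sequences in $\gr\text{-}R$ whose terms all lie in $\CM^{gr}(R)$. The key steps, in order, are the following.

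\emph{Step 1: the suspension functor.} For each $M\in\CM^{gr}(R)$, choose a conflation
\[0\to M\to I(M)\to \Omega^{-1}M\to 0\]
with $I(M)$ projective-injective; one exists because $\CM^{gr}(R)$ has enough injectives. I will show that $\Omega^{-1}$ is well defined on the stable category up to unique isomorphism. If $f\colon M\to N$ is a morphism, injectivity of $I(N)$ lets the composite $M\to N\to I(N)$ extend along $M\to I(M)$. This induces a map $\Omega^{-1}M\to\Omega^{-1}N$. The induced map is unique modulo morphisms factoring through projectives, since any two extensions differ by a map that factors through $I(M)$. Consequently $\Omega^{-1}$ is an additive endofunctor of $\underline{\CM^{gr}}(R)$. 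It is an autoequivalence, with quasi-inverse the syzygy functor built from projective covers; this dual construction works because projectives coincide with injectives.

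\emph{Step 2: distinguished triangles.} Given $u\colon X\to Y$, form the pushout of $X\to I(X)$ along $u$. This produces a conflation $0\to Y\to C_u\to\Omega^{-1}X\to 0$, and $C_u$ lies in $\CM^{gr}(R)$ by extension-closedness. The standard triangles are declared to be
\[X\xrightarrow{u}Y\to C_u\to\Omega^{-1}X,\]
and the distinguished triangles are all triangles isomorphic to these in the stable category.

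\emph{Step 3: the axioms.} TR1 holds essentially by construction. For the identity, note that $C_{\mathrm{id}}\cong I(X)\cong 0$ stably. For rotation (TR2), I will compare the pushout of $Y\to C_u$ with the rotated sequence $Y\to C_u\to\Omega^{-1}X\xrightarrow{-\Omega^{-1}u}\Omega^{-1}Y$, using that $I(X)$ becomes zero in the stable category. TR3 follows from the functoriality of pushouts. The octahedral axiom TR4 is obtained by composing pushout squares and applying the $3\times 3$ lemma in $\gr\text{-}R$; every object produced stays in $\CM^{gr}(R)$ by extension-closedness.

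I expect the main obstacle to be the bookkeeping in TR2, in particular the sign on $\Omega^{-1}u$, and the verification of TR4. In both cases the difficulty is checking that the relevant maps commute modulo morphisms factoring through projectives. To handle this, I will work throughout with explicit pushout diagrams and reduce each commutativity check to the statement that a difference of maps extends through an injective object. Since this is exactly Happel's general theorem, the argument can alternatively be cited: it requires nothing beyond the Frobenius property established in the previous lemma.
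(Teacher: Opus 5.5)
Your proposal is correct and matches the paper, which simply invokes Happel's theorem for the stable category of the Frobenius category $\CM^{gr}(R)$ given by the preceding lemma; your sketch of Happel's construction is a faithful unpacking of that citation. One small slip in Step 1: the two extensions differ by a map that vanishes on $M$ and so descends to a map $\Omega^{-1}M\to I(N)$, which means the difference of the induced maps $\Omega^{-1}M\to\Omega^{-1}N$ factors through $I(N)$, not through $I(M)$.
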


In the rest of this paper, we consider the case that $R$ is a graded Gorenstein 
local ring of hypersurface $S/(f)$.
A polynomial $f(x_1,\dots,x_n)\in S:=\C[x_1,\dots,x_n]$ is called \textit{weighted homogeneous},
if there are positive integers $w_1,\dots,w_n$ and $h$ such that 
\[f(\lambda^{w_1} x_1,\dots,\lambda^{w_n} x_n)=\lambda^{h}f(x_1,\dots,x_n)\]
for $\lambda \in \C^{*}$. 
A pair $(w_1,\dots,w_n;h)$ of positive integers  with $\gcd(w_1,\dots,w_n)=1$ is called 
a \textit{weight system}. 
For a weight system, we define the \textit{Euler vector field} $E$ by
\[ E:= \sum_{i=1}^{n}\frac{w_i}{h} x_i \frac{\partial}{\partial x_i}.\]
For a weight system, the ring $S$ becomes a $(2\Z/h)$-graded local ring with the maximal ideal 
$\mathfrak{m}=(x_1,\dots,x_n)$ by putting $\deg(x_i)=2w_i/h$. 
A graded piece decomposition $S=\oplus_{t\in\frac{2}{h}\Z_{\ge 0}}S_t$ is given by
$S_t:=\{g\in S\ |\ 2Eg=tg\}$ with $f \in S_2$.
For the category $\underline{\CM^{gr}}(R)$, we remark the following properties.

\begin{pro}\label{syztau}
  For a graded maximal Cohen-Macaulay module $M$ over $R$, we have
  \[\syz^{2}(M) \simeq \tau^{-h}M\]
  in $\underline{\CM^{gr}}(R)$.
\end{pro}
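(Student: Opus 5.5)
The plan is to go through matrix factorizations (Eisenbud's theorem): every graded maximal Cohen--Macaulay module over the hypersurface $R=S/(f)$ is $2$-periodic up to a grading shift determined by $\deg f = 2$.

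First, the structure of a maximal Cohen--Macaulay module. Since $R$ is a hypersurface and $M$ is maximal Cohen--Macaulay, I may assume $M$ has no free summands, because free summands vanish in $\underline{\CM^{gr}}(R)$. The Auslander--Buchsbaum formula over the regular ring $S$ gives $\mathrm{pd}_S M = n - \dim M = 1$. Hence there is a graded minimal free resolution over $S$,
\[
0 \rightarrow F_1 \xrightarrow{\ \varphi\ } F_0 \rightarrow M \rightarrow 0 .
\]
The graded free modules $F_0$ and $F_1$ have the same rank, because $M$ is torsion over $S$.

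Second, build the matrix factorization. Multiplication by $f$ kills $M$, and $f$ has degree $2$. Therefore $f\cdot \mathrm{id}_{F_0}$, viewed as a map $\tau^{-h}F_0 \to F_0$ of degree zero, factors through $\varphi$. This gives a graded map $\psi : \tau^{-h}F_0 \to F_1$ with $\varphi\psi = f$. Since $\varphi$ is injective, also $\psi\varphi = f$.

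Third, compute the syzygies over $R$. Reducing modulo $f$ gives the exact, $2$-periodic sequence of graded free $R$-modules
\[
\cdots \rightarrow \tau^{-h}\overline{F_1} \xrightarrow{\bar\psi} \overline{F_1}\,\text{-}\cdots
\]
More concretely, it is
\[
\cdots \rightarrow \tau^{-h}\overline{F_0} \xrightarrow{\bar\psi} \overline{F_1} \xrightarrow{\bar\varphi} \overline{F_0} \rightarrow M \rightarrow 0 .
\]
Exactness is the standard matrix factorization argument, which uses $\varphi\psi=\psi\varphi=f$ and that $f$ is a nonzerodivisor on $S$. From this sequence,
\[
\syz^1(M) = \operatorname{Im}\bar\varphi \cong \operatorname{Coker}\bar\psi .
\]
Continuing one more step,
\[
\syz^2(M) = \operatorname{Im}\bar\psi \cong \operatorname{Coker}(\tau^{-h}\bar\varphi) = \tau^{-h}M .
\]

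Fourth, minimality. Because $M$ has no free summands, minimality of the resolution means the entries of $\varphi$ lie in $\mathfrak m$. I also need the entries of $\psi$ to lie in $\mathfrak m$ after discarding trivial summands of the form $(1,f)$, so that no free summands have to be deleted from the resolution. If some entries of $\psi$ are units, I split off those trivial factorizations; these correspond exactly to free summands of the syzygy. This does not change the result in the stable category, so $\syz^2(M)\simeq \tau^{-h}M$ in $\underline{\CM^{gr}}(R)$.

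I expect the main obstacle to be the degree bookkeeping: checking that the shift is exactly $\tau^{-h}$, which comes from $\deg f = 2 = h\cdot(2/h)$. The minimality issue in the fourth step also needs care.
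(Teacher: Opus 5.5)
Your proof is correct and follows essentially the same route as the paper: both take the length-one graded $S$-free resolution $0\to F_1\to F_0\to M\to 0$ and reduce it modulo $f$ to obtain the exact sequence $0\to\tau^{-h}M\to \overline{F_1}\to\overline{F_0}\to M\to 0$. The paper gets this sequence from the snake lemma applied to multiplication by $f$ on the resolution, and its connecting map is exactly your $\bar\psi$; your Auslander--Buchsbaum argument and your discussion of free summands supply details the paper leaves implicit.
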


\begin{pf}
  Let $M$ be a graded maximal Cohen-Macaulay module over $R$.
  We have the following graded free resolution of $M$ as a graded $S$-module:
  \[ 0 \rightarrow S^n \overset{f_0}{\rightarrow} S^n \rightarrow M \rightarrow 0\]
  for some $n\in \Z_{>0}$.
  We consider the following commutative diagram in $\gr\text{-}S$.
  \[\begin{tikzcd}
	& 0 & 0 & {\tau^{-h}M} \\
	0 & {\tau^{-h}S^n} & {\tau^{-h}S^n} & {\tau^{-h}M} & 0 \\
	0 & {S^n} & {S^n} & M & 0 \\
	& {R^n} & {R^n} & M
	\arrow[from=1-2, to=2-2]
	\arrow[from=1-3, to=2-3]
	\arrow["{\textrm{id}}"', from=1-4, to=2-4]
	\arrow[from=2-1, to=2-2]
	\arrow["{f_0}", from=2-2, to=2-3]
	\arrow["f"', from=2-2, to=3-2]
	\arrow["\pi", from=2-3, to=2-4]
	\arrow["f"', from=2-3, to=3-3]
	\arrow[from=2-4, to=2-5]
	\arrow["f"', from=2-4, to=3-4]
	\arrow[from=3-1, to=3-2]
	\arrow["{f_0}", from=3-2, to=3-3]
	\arrow[from=3-2, to=4-2]
	\arrow["\pi", from=3-3, to=3-4]
	\arrow[from=3-3, to=4-3]
	\arrow[from=3-4, to=3-5]
	\arrow["{\textrm{id}}"', from=3-4, to=4-4]
  \end{tikzcd}\]
  By the snake lemma, we have the exact sequence 
  \[ 0 \rightarrow \tau^{-h}M \rightarrow R^n \rightarrow R^n \rightarrow M \rightarrow 0\]
  in $\gr\text{-}R$. By the definition of syzygy, we have $\syz^{2}(M) \simeq \tau^{-h}M$. 
\end{pf}

Proposition~\ref{syztau} implies that the isomorphism $T^{-2}M \simeq \tau^{-h}M$ holds in
$\underline{\CM^{gr}}(R)$ where $T$ is the translation functor on $\underline{\CM^{gr}}(R)$.

\begin{pro}\label{period}
  A graded maximal Cohen-Macaulay module over a graded local ring of a hypersurface 
  has a periodic graded free resolution with periodicity two.
\end{pro}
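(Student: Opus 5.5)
The plan is to obtain the periodic resolution from a matrix factorization of $f$, produced by resolving $M$ over the regular ring $S$. Here $R=S/(f)$, so $M$ is also a finitely generated graded $S$-module, annihilated by $f$.

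The first step is to show $M$ has projective dimension one over $S$. Since $M$ is maximal Cohen-Macaulay over $R$, we have $\operatorname{depth} M=\dim R=n-1$. The graded Auslander-Buchsbaum formula over $S$ then gives $\operatorname{pd}_S M=n-(n-1)=1$; this uses that graded projectives are free and that $S$ is regular. Hence there is a graded free resolution
\[0\rightarrow F_1\overset{f_0}{\rightarrow} F_0\rightarrow M\rightarrow 0,\]
with $F_0,F_1$ graded free $S$-modules. This is the resolution already used in the proof of Proposition~\ref{syztau}. The ranks of $F_0$ and $F_1$ agree, because $M\otimes_S \mathrm{Frac}(S)=0$, as $f$ annihilates $M$.

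The second step constructs the companion map. Since $fM=0$, multiplication by $f$ on $F_0$ lands in $\operatorname{Im}(f_0)$. Because $F_0$ is free, and $f_0$ is injective and graded, there is a unique graded map $f_1:\tau^{-h}F_0\rightarrow F_1$ (with shifts matching degrees) such that $f_0f_1=f\cdot\mathrm{id}$. Then $f_0f_1f_0=ff_0=f_0(f\cdot \mathrm{id})$, and injectivity of $f_0$ gives $f_1f_0=f\cdot\mathrm{id}$ as well. So $(f_0,f_1)$ is a graded matrix factorization of $f$.

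The third step reduces modulo $f$ to get the periodic complex
\[\cdots\rightarrow \tau^{-h}\bar F_1\overset{\bar f_0}{\rightarrow}\tau^{-h}\bar F_0\overset{\bar f_1}{\rightarrow}\bar F_1\overset{\bar f_0}{\rightarrow}\bar F_0\rightarrow M\rightarrow 0,\]
where $\bar F_i:=F_i/fF_i$ are graded free $R$-modules. We must check exactness. Exactness at $\bar F_0$ is immediate from $\operatorname{Coker}\bar f_0=\operatorname{Coker}f_0=M$.

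For exactness at $\bar F_1$, suppose $f_0(x)=fy$ for some $x\in F_1$ and $y\in F_0$. Then $f_0(x)=f_0f_1(y)$, so $x=f_1(y)$ by injectivity of $f_0$. The roles of $f_0$ and $f_1$ are symmetric, since $f_1$ is also injective: $S$ is a domain and $f_0f_1=f$ is injective. So exactness at the remaining spots follows by the same argument.

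The resolution therefore has period two, with $\tau^{-h}$ twist accounting for the grading; this is consistent with Proposition~\ref{syztau}. The main obstacle is the first step, namely justifying $\operatorname{pd}_S M=1$ in the graded local setting via Auslander-Buchsbaum and the depth of $M$ over $S$ versus $R$. After that, the remaining steps are formal diagram chases using injectivity.
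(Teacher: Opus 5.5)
Your proof is correct. It is Eisenbud's classical construction. The paper gives no proof of Proposition~\ref{period}; it quotes it as standard. The closest argument in the paper is the proof of Proposition~\ref{syztau}, which reaches the same resolution somewhat differently.

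\textbf{The paper's route.} It simply asserts the length-one $S$-resolution $0\to S^n\xrightarrow{f_0}S^n\to M\to 0$. It does not construct the companion map or check exactness spot by spot. Instead it maps the $\tau^{-h}$-twisted resolution into the original one by multiplication by $f$ and applies the snake lemma. Since $fM=0$, this gives the exact sequence $0\to\tau^{-h}M\to R^n\to R^n\to M\to 0$ in one stroke. Splicing its $\tau^{-kh}$-twists then produces the resolution that is $2$-periodic up to $\tau^{-h}$. The connecting homomorphism works by lifting, multiplying by $f$, and pulling back through $f_0$. Composed with $\tau^{-h}\bar F_0\to\tau^{-h}M$, it is exactly your $\bar f_1$, so the two resolutions coincide.

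\textbf{Comparison.} The snake-lemma route is shorter, since exactness comes for free. It also directly yields $\syz^{2}(M)\simeq\tau^{-h}M$. Your route has two advantages:
\begin{itemize}
\item It makes the matrix factorization $(f_0,f_1)$ explicit, which is the bridge to Proposition~\ref{A}.
\item It justifies $\operatorname{pd}_S M=1$, via graded Auslander--Buchsbaum and $\operatorname{depth}_S M=\operatorname{depth}_R M$. The paper takes this step for granted.
\end{itemize}
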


We assume that $R$ is a graded isolated singularity, namely,
the graded localization $R_{\mathfrak{p}}$ is regular for any garded prime
$\mathfrak{p} \neq \mathfrak{m}$.
The category $\underline{\CM^{gr}}(R)$ admits the \textit{Serre functor}:

\begin{thm}[{{\cite[Theorem 1.5]{iyama2013tilting}}\cite[Theorem 3.8]{kajiura2009triangulated}}]\label{cmserre}
  Let $R$ be a graded Gorenstein local ring of a hypersurface which defines an isolated singularity
  and has the canonical module.
  The functor $\mathcal{S}:=T^{d-1}\tau^{-\varepsilon(R)}$ is the Serre functor
  on $\underline{\CM^{gr}}(R)$.
  More precisely, the functor $\mathcal{S}$ is an auto-equivalence functor
  satisfying the following properties:
  \begin{itemize}
    \item[(i)] $\Hom_{\gr\text{-}R}(M,\mathcal{S}M) \simeq \C$
    for any object $M \in \underline{\CM^{gr}}(R)$.
    \item[(ii)] The isomorphism of (i) induces the following 
    nondegenerate bilinear map:
    \[ \Hom_{\gr\text{-}R}(M,N) \otimes \Hom_{\gr\text{-}R}(N,\mathcal{S}M)
    \rightarrow \C\]
    for any $M,N \in \underline{\CM^{gr}}(R)$. 
  \end{itemize}
\end{thm}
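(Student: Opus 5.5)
The plan is to derive the statement from graded Auslander–Reiten duality, proved with graded local duality and Tate cohomology. Write $D(-):=\Hom_{\C}(-,\C)$ for the graded $\C$-dual, taken degreewise, and $\underline{\Hom}$ for stable morphisms. Since the degree-zero part of $\underline{\Hom}_R(M,N)$ is $\underline{\Hom}_{\gr\text{-}R}(M,N)$, it suffices to prove a natural graded isomorphism
\[ D\,\underline{\Hom}_R(M,N)\simeq \underline{\Hom}_R\bigl(N,T^{d-1}(M\otimes_R K_R)\bigr) \]
and then take degree-zero parts. Because $R$ is Gorenstein, $K_R\simeq\tau^{-\varepsilon(R)}R$, so $M\otimes_R K_R\simeq\tau^{-\varepsilon(R)}M$. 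This gives exactly $\mathcal{S}=T^{d-1}\tau^{-\varepsilon(R)}$. The functor $\mathcal{S}$ is an auto-equivalence, since $T$ and $\tau$ are.

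\emph{Step 1: finite length.} Let $M,N\in\CM^{gr}(R)$. For every graded prime $\mathfrak{p}\neq\mathfrak{m}$, the localization $R_{\mathfrak{p}}$ is regular, so $M_{\mathfrak{p}}$ is free. Hence $\underline{\Hom}_R(M,N)_{\mathfrak{p}}=0$, and likewise all Tate groups $\widehat{\Ext}^i_R(M,N)=\underline{\Hom}_R(M,T^iN)$ vanish after localizing at $\mathfrak{p}$. These modules are therefore of finite length, and $D$ of them is again a finitely generated graded module.

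\emph{Step 2: local cohomology of a complete resolution.} Take a complete graded free resolution $F_\bullet$ of $M$; it is $2$-periodic by Proposition~\ref{period}. Form the complex $C=\Hom_R(F_\bullet,N)$. Each term is a finite sum of shifts of $N$, so it is maximal Cohen--Macaulay and satisfies $H^j_{\mathfrak{m}}(C^k)=0$ for $j\neq d$. The cohomology of $C$ is the Tate cohomology, which has finite length by Step 1, so $H^j_{\mathfrak{m}}$ of it vanishes for $j>0$. Comparing the two hypercohomology spectral sequences for $R\Gamma_{\mathfrak{m}}(C)$ gives
\[ \widehat{\Ext}^{i}_R(M,N)\simeq H^{i+d}\bigl(H^d_{\mathfrak{m}}(C)\bigr). \]
This is legitimate because $C$ is bounded up to periodicity; one truncates far out, using exactness of the complete resolution there.

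\emph{Step 3: graded local duality.} For a maximal Cohen--Macaulay module $X$ we have $H^d_{\mathfrak{m}}(X)\simeq D\Hom_R(X,K_R)$. For $F$ free there is a natural isomorphism $\Hom_R(\Hom_R(F,N),K_R)\simeq \Hom_R(N,F\otimes_R K_R)$, and $F_\bullet\otimes K_R$ is a complete resolution of $M\otimes K_R$. Combining these, $H^d_{\mathfrak{m}}(C)\simeq D\,\Hom_R(N,F_\bullet\otimes K_R)$. Since $D$ is exact, taking cohomology yields
\[ \widehat{\Ext}^{i}_R(M,N)\simeq D\,\widehat{\Ext}^{-i-d}_R(N,M\otimes K_R). \]
Setting $i=0$, replacing $D$ by its inverse, and using $T^2\simeq\tau^{h}$ (Proposition~\ref{syztau}) to track the periodicity gives the displayed isomorphism.

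For (ii), naturality of the isomorphism in $N$ means it is the pairing obtained by composing and then applying the linear form on $\underline{\Hom}(M,\mathcal{S}M)$ that corresponds to $\mathrm{id}_M$. Nondegeneracy then follows because $D$ of a finite-dimensional space is perfect. Statement (i) is the special case $N=M$, read as the existence of this canonical trace functional.

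The main obstacle is Step 2/3 bookkeeping. One must check that the identification is natural in both variables, so that it really is the composition pairing. One must also track the internal degrees (the $\tfrac{2}{h}$ grading and the sign of $\varepsilon(R)$) and the homological shift $d$ versus $d-1$ precisely. I would first verify the shifts on the simplest case $d=1$, $R=\C[x]/(x^2)$ with $M=R/(x)$, and only then run the general argument.
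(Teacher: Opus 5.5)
The paper does not prove this statement; it cites Iyama--Takahashi and Kajiura--Saito--Takahashi. Only for $\tilde f_q$ does it recover $\mathcal{S}=\tau^{\mu-1}$ independently, via Bondal's mutation formula once the full exceptional collection is known. Your route is Tate duality from graded local duality (graded Auslander--Reiten/Buchweitz duality), the standard argument behind those references, and it is sound in outline. However, the shift bookkeeping is wrong in two places, and your final step of using $T^2\simeq\tau^h$ to reach $T^{d-1}$ cannot work.

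\textbf{Step 2.} The \v{C}ech-first spectral sequence is concentrated in the row $q=d$. So $E_2^{p,d}=H^p(H^d_{\mathfrak m}(C))$ contributes to total degree $p+d$, and the correct statement is $\widehat{\Ext}^i_R(M,N)\simeq H^{i-d}(H^d_{\mathfrak m}(C))$, not $H^{i+d}$. Convergence holds because the \v{C}ech complex on generators of $\mathfrak m$ is bounded, so the double complex has finitely many rows. It does not come from truncating $C$: $C$ is not exact far out, since its cohomology is $2$-periodic.

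\textbf{Step 3.} Index the complete resolution with $F_0\to M$ and $M\hookrightarrow F_{-1}$. The kernel of $\Hom_R(N,F_k)\to\Hom_R(N,F_{k-1})$ is $\Hom_R(N,\Omega^{k+1}M)$. The image from $\Hom_R(N,F_{k+1})$ consists of the maps factoring through $F_{k+1}\to\Omega^{k+1}M$. Hence $H_k(\Hom_R(N,F_\bullet))\simeq\underline{\Hom}_R(N,\Omega^{k+1}M)=\widehat{\Ext}^{-k-1}_R(N,M)$; you dropped the $-1$.

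With both corrections you get
\[ \widehat{\Ext}^{i}_R(M,N)\simeq D\,\widehat{\Ext}^{\,d-1-i}_R(N,M\otimes_R K_R), \]
and $i=0$ gives $T^{d-1}$ directly, with no appeal to periodicity. Your formulas instead give $T^{-d}$ at $i=0$. Now $T^{-d}$ and $T^{d-1}$ differ by the odd power $T^{2d-1}$, and $T^2\simeq\tau^h$ cannot turn that into a grading shift. In the paper's case $d=1$ you would wrongly predict $T^{-1}\tau^{\mu-1}$ instead of $\tau^{\mu-1}$.

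Your planned test case would catch this, but $\C[x]/(x^2)$ has $d=0$, not $d=1$. There $\mathcal{S}=T^{-1}\tau$ and $\mathcal{S}(R/(x))\simeq R/(x)$. For a $d=1$ check, use $\C[x,y]/(xy)$ with $\deg x=\deg y$, where $\mathcal{S}=\mathrm{id}$.

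Finally, your reinterpretation of (i) is necessary, not optional. Taken literally, $\underline{\Hom}_{\gr\text{-}R}(M,\mathcal{S}M)\simeq\C$ fails whenever $\dim\underline{\mathrm{End}}_{\gr\text{-}R}(M)>1$, for example for $M\oplus M$. What the duality actually gives is $\underline{\Hom}_{\gr\text{-}R}(M,\mathcal{S}M)\simeq D\,\underline{\mathrm{End}}_{\gr\text{-}R}(M)$ together with its trace functional. Statement (ii) then follows by Yoneda from naturality in $N$, as you say.
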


\subsection{Category of graded matrix factorizations.}

In this subsection, we recall some notions of the category of graded matrix factorizations
associated to a weighted homogeneous polynomial $f$. 
We refer to \cite{kajiura2007matrix} and \cite{kajiura2009triangulated} 
for terminologies and the statements presented here.

\begin{dfn}
  For a weighted homogeneous polynomial $f \in S$, we define an additive category $\MF^{gr}_{S}(f)$
  as follows. The set of objects consist of all \textit{graded matrix factorizations} of $f$ defined by
  \[ \overline{F}=\Bigl(\mf{F_0}{f_0}{f_1}{F_1}\Bigr),\]
  where $F_0$ and $F_1$ are graded free $S$-modules of finite rank, and
  $f_0:F_0 \rightarrow F_1$ is a graded $S$-homomorphism of degree zero and
  $f_1:F_1\rightarrow F_0$ is a graded $S$-homomorphism of degree two such that 
  $f_1 f_0 = f \cdot \mathrm{id}_{F_0}$ and $f_0 f_1 = f \cdot \mathrm{id}_{F_1}$.
  A morphism $\Phi : \overline{F} \rightarrow \overline{F'}$ in $\MF^{gr}_{S}(f)$ is a 
  pair of graded $S$-homomorphisms
  $\phi_0 : F_0 \rightarrow F'_0$ and $\phi_1 : F_1 \rightarrow F'_1$ of degree zero
  satisfying $\phi_1 f_0 = f'_0 \phi_0$ and $\phi_0 f_1 = f'_1 \phi_1$.
  The space of all morphisms from $\overline{F}$ to $\overline{F'}$ is
  denoted by $\MF^{gr}_{S}(f)(\overline{F},\overline{F'})$.
\end{dfn}

For a graded matrix factorization $\overline{F}$, the rank of $F_0$
coincides with that of $F_1$, which we call the rank of the matrix factorization 
$\overline{F}$ and denote it by $\rk (\overline{F})$.
An object $\overline{F}\in \HMF^{gr}_{S}(f)$ is a zero object if and only if 
it is a direct sum of the graded matrix factorizations of the forms
\[ \Bigl( \mf{\tau^{n}S}{f}{1}{\tau^{n+h}S} \Bigr) \]
and
\[ \Bigl( \mf{\tau^{n'}S}{1}{f}{\tau^{n'}S} \Bigr) \]
for some $n,n' \in \Z$.
A graded matrix factorization $\overline{F}$ is called \textit{reduced} if it has no 
direct summand of zero object. 

\begin{dfn}
  An additive category $\HMF^{gr}_{S}(f)$ is defined by the following data.
  The set of objects is given by the set of all graded matrix factorizations:
  \[ \Ob(\HMF^{gr}_{S}(f)):= \MF^{gr}_{S}(f).\]
  For any two objects $\overline{F},\overline{F'} \in \MF^{gr}_{S}(f)$, the space of morphisms
  is given by the quotient module:
  \[ \HMF^{gr}_{S}(f)(\overline{F},\overline{F'}):=\MF^{gr}_{S}(f)(\overline{F},\overline{F'})/\sim,\]
  where two morphisms $\Phi,\Phi' : \overline{F} \rightarrow \overline{F'}$ is called 
  homotopic $\Phi \sim \Phi'$ if there are graded $S$-homomorphisms $h_0 : F_0 \rightarrow F'_1$ 
  of degree minus two and $h_1 : F_1 \rightarrow F'_0$ of degree zero such that 
  \[\Phi - \Phi'=( h_1 f_0 + f'_1 h_0,f'_0 h_1 + h_0 f_1).\]
\end{dfn}

The auto-equivalence functor $\tau$ on $\gr\text{-}R$ induces the one on
$\HMF^{gr}_{S}(f)$, which we also denote by the same symbol $\tau$.
Explicitly, the action of $\tau$ is given by 
\[\tau\overline{F}:=\Bigl(\mf{\tau F_0}{\tau(f_0)}{\tau(f_1)}{\tau F_1}\Bigr),\]
for an object $\overline{F}$ and 
$\tau\Phi:=(\tau(\phi_0),\tau(\phi_1))$ for a morphism $\Phi=(\phi_0,\phi_1)$. 
We shall denote by $T$ the translation functor on $\HMF^{L_{f}}_{S}(f)$, 
which is given by 
\[ T\overline{F}:=\Bigl(\mf{F_{1}}{-f_1}{-\tau^{h}(f_0)}{F_{0}}\Bigr)\]
for an object $\overline{F}$ and $T\Phi:=(\phi_1,\tau^{h}(\phi_0))$ for a morphism $\Phi=(\phi_0,\phi_1)$.

Next, we explain the triangulated structure in $\HMF^{L_{f}}_{S}(f)$.

\begin{dfn}
  For a morphism $\Phi=(\phi_0,\phi_1) \in \HMF^{gr}_{S}(f)(\overline{F},\overline{F'})$, 
  we define the \textit{mapping cone} $C(\Phi) \in \MF^{gr}_{S}(f)$ as
  \[ C(\Phi):=\Bigl(\mf{F_{1}\oplus F'_0}{c_0}{c_1}{\tau^{h}F_0 \oplus F'_1}\Bigr),\]
  where
  \[ c_0:=\begin{pmatrix}
          -f_1   & 0    \\
          \phi_1 & f'_0 \\
           \end{pmatrix}
    ,\  
      c_1:=\begin{pmatrix}
          -\tau^{h}(f_0)   & 0 \\
          \tau^{h}(\phi_0) & f'_1 \\
           \end{pmatrix}.\]
\end{dfn}

Since there exist morphisms $\overline{F'} \overset{^{t}(0\ \mathrm{id})}{\rightarrow} C(\Phi)$ and 
$C(\Phi) \overset{(-\mathrm{id}\ 0)}{\rightarrow} T\overline{F}$, one can easily see that

\begin{pro}
  Each exact triangle in $\HMF^{gr}_{S}(f)$ is isomorphic to a triangle of the form
  \[ \overline{F} \overset{\Phi}{\rightarrow} \overline{F'} 
  \overset{^{t}(0\ \mathrm{id})}{\rightarrow} C(\Phi) \overset{(-\mathrm{id}\ 0)}{\rightarrow} T\overline{F}\]
  for some $\overline{F},\overline{F'} \in \HMF^{gr}_{S}(f)$ and 
  $\Phi \in \HMF^{gr}_{S}(f)(\overline{F},\overline{F'})$.
\end{pro}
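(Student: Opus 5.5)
The plan is to derive the statement from the triangulated structure on $\HMF^{gr}_{S}(f)$. I view $\HMF^{gr}_{S}(f)$ as the stable category of a Frobenius exact category, in the same way that $\underline{\CM^{gr}}(R)$ was obtained from $\CM^{gr}(R)$ via Happel's theorem. I then check that Happel's standard triangles are exactly the cone triangles displayed in the statement, up to isomorphism. Since the exact triangles of a stable category of a Frobenius category are, by definition, those isomorphic to standard triangles, this proves the statement.

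\emph{Step 1: the Frobenius structure.} Equip $\MF^{gr}_{S}(f)$ with the class of sequences $\overline{F'} \to \overline{G} \to \overline{F''}$ that are split exact in each component $F_0$, $F_1$ as graded free $S$-modules. The projective-injective objects for this structure are the direct sums of
\[ \Bigl( \mf{\tau^{n}S}{f}{1}{\tau^{n+h}S} \Bigr) \quad\text{and}\quad \Bigl( \mf{\tau^{n'}S}{1}{f}{\tau^{n'}S} \Bigr), \]
that is, the zero objects listed above. A morphism factors through such an object if and only if it is null-homotopic, so the stable category is precisely $\HMF^{gr}_{S}(f)$. For each $\overline{F}$, the cone of $\mathrm{id}_{\overline{F}}$ is a projective-injective object $I(\overline{F})$. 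It comes with a componentwise split inclusion $\overline{F} \to I(\overline{F})$, whose cokernel is $T\overline{F}$. Hence the suspension in Happel's construction agrees with the translation $T$ defined above.

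\emph{Step 2: identify the standard triangle with the cone.} Given $\Phi : \overline{F} \to \overline{F'}$, Happel's standard triangle is built from the pushout of $\overline{F} \to I(\overline{F})$ along $\Phi$. This pushout has components $F_1 \oplus F'_0$ and $\tau^{h}F_0 \oplus F'_1$. Writing out its differentials, I expect them to coincide with $c_0$ and $c_1$, possibly after the sign change $-\mathrm{id}$ on the first summand. The induced maps $\overline{F'} \to C(\Phi)$ and $C(\Phi) \to T\overline{F}$ are then ${}^{t}(0\ \mathrm{id})$ and $(-\mathrm{id}\ 0)$. As a sanity check I also verify directly that both maps commute with the differentials, and that the compositions $\overline{F} \to C(\Phi)$ and $\overline{F'} \to T\overline{F}$ are zero in $\HMF^{gr}_{S}(f)$. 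For the first, the homotopy $(h_0,h_1)$ given by the inclusion of $F_0$, resp. $F_1$, into the first summand should work.

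\emph{Main obstacle.} The main obstacle is Step 2, specifically keeping track of the signs and the twist $\tau^{h}$. The map $f_1$ has degree two, and $T$ involves $\tau^{h}(f_0)$, so both the degree bookkeeping and the sign convention $(-\mathrm{id}\ 0)$ must be matched exactly with Happel's conventions. If they do not match on the nose, I would compose with the automorphism of $C(\Phi)$ that is $-\mathrm{id}$ on one summand. This automorphism turns one triangle into an isomorphic one, which is all the statement requires.
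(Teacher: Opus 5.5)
Your overall route is sound and does more than the paper does. The paper only observes that ${}^{t}(0\ \mathrm{id})$ and $(-\mathrm{id}\ 0)$ are morphisms, and takes the displayed cone triangles as the standard triangles defining the structure (the Orlov / Kajiura--Saito--Takahashi convention). The gap is in Step~2, exactly at the sign you flagged, and your proposed repair does not work.

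\textbf{What the pushout actually gives.} Take $I(\overline F)=C(\mathrm{id}_{\overline F})$, $\iota={}^{t}(0\ \mathrm{id})$ and the natural deflation $\pi=(\mathrm{id}\ 0)\colon I(\overline F)\to T\overline F$. Then the pushout is $C(\Phi)$ with differentials exactly $c_0,c_1$; no sign change is needed there. The map from $\overline{F'}$ is ${}^{t}(0\ \mathrm{id})$. But the induced map $C(\Phi)\to T\overline F$ is $(\mathrm{id}\ 0)$, not $(-\mathrm{id}\ 0)$.

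\textbf{Why your fallback fails.} The map $\mathrm{diag}(-\mathrm{id},\mathrm{id})$ is not an automorphism of $C(\Phi)$. Conjugating $c_0$ by it negates the $\phi_1$ entry, so it is an isomorphism $C(\Phi)\cong C(-\Phi)$. More fundamentally, negating exactly one of the three maps of an exact triangle does not give an isomorphic triangle in general. Suppose $(\Phi,v,w)$ and $(\Phi,v,-w)$ were both exact. Then TR3 would give $\gamma\in\mathrm{Aut}(C(\Phi))$ with $\gamma v=v$ and $w=-w\gamma$. This is impossible when $\mathrm{End}(C(\Phi))=\C$ and $v,w\neq 0$. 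That happens, for example, for $\Phi=\Phi_I^J$ with $\emptyset\neq I\subsetneq J\subsetneq\{1,\dots,\mu+1\}$: all three terms are exceptional and the triangle does not split. So with the natural choices, the statement is false for Happel's class of triangles, and no rearrangement up to isomorphism can rescue it.

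\textbf{The fix.} Happel's construction allows any conflation $\overline F\to I(\overline F)\to T\overline F$ with projective-injective middle term. So choose the deflation to be $-\pi=(-\mathrm{id}\ 0)$; this is still a morphism and a componentwise split cokernel of $\iota$.
\begin{itemize}
\item Negating all deflations at once leaves the suspension on morphisms unchanged. The map $T\Phi=(\phi_1,\tau^{h}(\phi_0))$ is still induced by the lift $\mathrm{diag}(\phi_1,\phi_0)$, $\mathrm{diag}(\tau^{h}(\phi_0),\phi_1)$ of $\Phi$ to $I(\overline F)\to I(\overline{F'})$.
\item The pushout now induces exactly $(-\mathrm{id}\ 0)$, so Happel's standard triangles are literally the displayed ones.
\end{itemize}
Equivalently, the two sign choices give genuinely different classes of exact triangles, related by the triangle equivalence consisting of the identity functor together with $-\mathrm{id}_T$. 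Which sign appears is therefore a convention. That is why the paper can settle the statement in one line, by adopting the $(-\mathrm{id}\ 0)$ convention as the definition.
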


From these descriptions of auto-equivalences $\tau$ and $T$, we obtain the following.

\begin{pro}\label{T2}
  On the category $\HMF^{gr}_{S}(f)$, we have $T^2 = \tau^{h}$.
\end{pro}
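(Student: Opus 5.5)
The plan is to compute $T^2$ directly from the explicit formula for the translation functor and then exhibit a natural isomorphism to $\tau^{h}$. For an object $\overline{F}=(F_0\overset{f_0}{\to}F_1\overset{f_1}{\to}F_0)$, we have $T\overline{F}=(F_1\overset{-f_1}{\to}F_0\overset{-\tau^h(f_0)}{\to}F_1)$, where the target of $-f_1$ is implicitly $\tau^hF_0$ so that the first map has degree zero. Applying $T$ once more and using that $\tau^h$ is additive and commutes with negation, we get
\[
T^2\overline{F}=\Bigl(\mf{\tau^{h}F_0}{\tau^{h}(f_0)}{\tau^{h}(f_1)}{\tau^{h}F_1}\Bigr).
\]
Here the two sign changes cancel on each map, and the shift $\tau^h$ has been applied once to each component. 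This is precisely $\tau^{h}\overline{F}$ as defined via the action of $\tau$ on objects.

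On morphisms, we will check that for $\Phi=(\phi_0,\phi_1)$ we have $T\Phi=(\phi_1,\tau^h(\phi_0))$. Hence
\[
T^2\Phi=(\tau^h(\phi_0),\tau^h(\phi_1))=\tau^h\Phi .
\]
Then $T^2$ and $\tau^h$ agree on objects and on morphisms of $\MF^{gr}_S(f)$.

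Both functors send homotopies to homotopies: the homotopy $(h_0,h_1)$ is transported to $(\tau^h h_0,\tau^h h_1)$ up to the same relabelling. So the equality descends to $\HMF^{gr}_S(f)$, and the identity natural transformation gives the isomorphism $T^2=\tau^h$.

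The only delicate point is the degree bookkeeping. We must confirm that the identification of $\tau^h\tau^{h}$-shifted copies implicit in writing $T\overline{F}$ with underlying modules $F_1,F_0$ is consistent, namely that the second application of $T$ shifts exactly the component that was not shifted before. Concretely, $T\overline{F}$ has zeroth module $F_1$ and first module $\tau^hF_0$ (as the mapping cone formula makes explicit). Then $T^2\overline{F}$ has zeroth module $\tau^hF_0$ and first module $\tau^hF_1$, which matches $\tau^h\overline{F}$. Tracking this convention carefully is the main, though routine, obstacle.
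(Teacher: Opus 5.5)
Your proof is correct and takes the same approach as the paper: the paper reads off $T^2=\tau^{h}$ directly from the explicit formulas for $T$ and $\tau$ on objects and morphisms. Your bookkeeping ($T\overline{F}$ has components $F_1$ and $\tau^{h}F_0$, the two sign changes cancel, and $T^2\Phi=(\tau^{h}(\phi_0),\tau^{h}(\phi_1))$) is exactly the verification left implicit there, and since the equality already holds strictly on $\MF^{gr}_{S}(f)$ it descends to $\HMF^{gr}_{S}(f)$ automatically.
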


In this paper, for the convenience of the later discussion, we introduce the notion
of a \textit{phase} for a graded matrix factorization.
Let $\overline{F}=\Bigl(\mf{F_0}{f_0}{f_1}{F_1}\Bigr)$ 
be a reduced graded matrix factorization of rank $r$. 
We choose homogeneous free basis $(b_1,\dots,b_r;\bar{b}_1,\dots,\bar{b}_r)$
such that $F_0 = b_1 S \oplus \dots \oplus b_r S$ and  
$F_1 = \bar{b}_1 S \oplus \dots \oplus \bar{b}_r S$.
Then, we can express $F_0$ and $F_1$ as $\tau^{t_1} S \oplus \dots \oplus \tau^{t_r} S$
and $\tau^{\bar{t}_1} S \oplus \dots \oplus \tau^{\bar{t}_r} S$, respectively,
such that $b_i \in S_{2t_i/h}$ and $\bar{b}_i \in S_{2\bar{t}_i/h}$, $i=1,\dots,r$.
The graded $S$-homomorphisms $f_0:F_0 \rightarrow F_1$ and $f_1:F_1 \rightarrow F_0$
have the matrix expressions $q_0$ and $q_1$, respectively, whose entries are homogeneous elements in $S$. 
They are defined as $f_0(b_1,\dots,b_r)=(b_1,\dots,b_r)q_0$ and 
$f_1(\bar{b}_1,\dots,\bar{b}_r)=(\bar{b}_1,\dots,\bar{b}_r)q_1$.
The pair $(t_1,\dots,t_r;\bar{t}_1,\dots,\bar{t}_r)$ of integers is uniquely
determined by the matrix expressions $q_0$ and $q_1$ up to the action of $\tau$.

\begin{dfn}[cf. {\cite[Definition 6.1 and Definition 6.2]{kajiura2009triangulated}}]\label{phasespectrum}
  For an indecomposable graded matrix factorization $\overline{F} \in \HMF^{gr}_{S}(f)$,  
  the \textit{phase} $\phi(\overline{F})$ of $\overline{F}$ is defined by 
  \[ \phi(\overline{F}):= \frac{1}{\rk(\overline{F})}\sum_{i=1}^{\rk(\overline{F})}(t_i+\bar{t}_i). \]
  For two indecomposable graded matrix factorizations 
  $\overline{F},\overline{F'} \in \HMF^{gr}_{S}(f)$, we define
  $$\phi(\overline{F},\overline{F'}):= \phi(\overline{F'})-\phi(\overline{F}).$$
  The spectrum $\mathfrak{sp}(\overline{F},\overline{F'})$ is defined by the 
  following multi-set of rational numbers:
  \[ \mathfrak{sp}(\overline{F},\overline{F'}):=
  \{ \phi(\overline{F},\tau^{n}\overline{F'})^
  {\dim_{\C}\HMF^{gr}_{S}(f)(\overline{F},\tau^{n}\overline{F'})}\ |\ n\in\Z\},\]
  where $\phi(\overline{F},\tau^{n}\overline{F'})^
  {\dim_{\C}\HMF^{gr}_{S}(f)(\overline{F},\tau^{n}\overline{F'})}$ indicates
  that we include $\dim_{\C}\HMF^{gr}_{S}(f)(\overline{F},\tau^{n}\overline{F'})$ copies
  of $\phi(\overline{F},\tau^{n}\overline{F'})$ in $\mathfrak{sp}(\overline{F},\overline{F'})$.
  In particular, we denote ${\mathfrak sp}(\overline{F},\overline{F})$ by $\mathfrak{sp}(\overline{F})$.
\end{dfn}

From Definition \ref{phasespectrum}, for any two indecomposable graded matrix factorizations
$\overline{F},\overline{F'}$, one has
\begin{align*}
  &\phi(\tau\overline{F})= \phi(\overline{F})+2, \\ 
  &\phi(T\overline{F})= \phi(\overline{F})+h, \\
  &\mathfrak{sp}(\overline{F},\tau\overline{F'})=\mathfrak{sp}(\tau\overline{F},\overline{F'})=
  \mathfrak{sp}(\overline{F},\overline{F'}), \\
  &\mathfrak{sp}(T\overline{F},T\overline{F'})
  =\mathfrak{sp}(\overline{F},\overline{F'}).
\end{align*}

At the end of this section, we give the construction of a graded matrix factorization
of $f$ for an $R$-module $M$ of the form $S/I$, where the ideal $I$ is generated by 
a regular sequence of $R$ and contains $f$.

\begin{dfn}[cf. {\cite[Section 2.3]{dyckerhoff2011compact}}]
  Let $M$ be an $R$-module of the form $S/(g_1,\dots,g_s)$ such that $g_i \in S_{2 t_i/h}$
  for some $t_i \in \Z$, and $g_1,\dots,g_s$ forms a regular sequence and $f \in (g_1,\dots,g_s)$.
  We write $f = g_1 h_1 + \cdots + g_s h_s$ with $h_i \in S_{2-2 t_i/h}$ and put 
  $P:=\oplus_{i=1}^{s}\tau^{-t_i}S$.
  The Koszul resolution of $M$ as a graded $S$-module
  \[ 0\rightarrow \wedge^{s}P \overset{d_s}{\rightarrow} \wedge^{s-1}P 
  \overset{d_{s-1}}{\rightarrow} \cdots \overset{d_{2}}{\rightarrow} \wedge^1 P 
  \overset{d_{1}}{\rightarrow} \wedge^0 P=S \rightarrow M \rightarrow 0\]
  yeilds the graded matrix factorization 
  $\overline{F}=\Bigl(\mf{F_0}{f_0}{f_1}{F_1}\Bigr)$ of $f$, where
  \[ F_0:=\bigoplus_{k} \tau^{kh}(\wedge^{2k}P)\ ,\ 
  F_1:=\bigoplus_{k}\tau^{kh}(\wedge^{2k-1}P),\]
  and graded $S$-homomorphisms $f_0$ and $f_1$ are determined by $d_{\bullet}$ and 
  $\bar{d}_{\bullet}:\wedge^{\bullet-1}P \rightarrow \wedge^{\bullet}P$
  which gives a ``contracting homotopy" of $d_{\bullet}$ satisfying $\bar{d}^2=0$ 
  and $d\bar{d}+\bar{d}d=f\cdot \textrm{id}_{\wedge^{\bullet}P}$. 
  It is called the \textit{stabilization} of $M$ and denoted by $M^{stab}$.
\end{dfn}

\begin{exmp}
  We construct the stabilization $M^{stab}$ of $M$ in the case of $s=2$ explicitly.
  Let $M$ be an $R$-module of the form $S/(g_1,g_2)$.
  We write $f=g_1 h_1 + g_2 h_2$. The Koszul resolution of $M$ is given by
  \[ 0 \rightarrow \tau^{(-t_1-t_2)}S \overset{d_2}{\rightarrow} 
  \tau^{-t_1}S \oplus \tau^{-t_2}S \overset{d_1}{\rightarrow} S \rightarrow M \rightarrow 0,\]
  where $d_1(x_1,x_2)=g_1 x_1 - g_2 x_2$ and $d_2(y)=(g_2 y,g_1 y)$ 
  for homogeneous elements $x_1,x_2$ and $y$ in $S$.
  A contracting homotopy of such $d_{\bullet}$ is given by
  $\bar{d}_1(z)=(h_1 z,-h_2 z)$ and $\bar{d}_2(x_1,x_2)=h_2 x_1 + h_1 x_2$.
  We can easily see that $\bar{d}_1 d_1 + d_2 \bar{d}_2 = f \cdot \mathrm{id}_{F_1}$
  and $d_1 \bar{d}_1 + \bar{d}_2 d_2 = f \cdot \mathrm{id}_{F_0}$, 
  then we obtain $M^{stab}$ which has the form
  \[ M^{stab} = \Bigl(\mf{S \oplus \tau^{(h-t_1-t_2)}S}{f_0}{f_1} 
  {\tau^{(h-t_1)}S \oplus \tau^{(h-t_2)}S}\Bigr),\]
  where
  \[ f_0=\begin{pmatrix}
        h_1   &  g_2 \\
        -h_2  &  g_1 \\
         \end{pmatrix},
    f_1=\begin{pmatrix}
        g_1   &  -g_2\\
        h_2   &  h_1 \\
        \end{pmatrix}.
  \]
\end{exmp}
\section{Category generating theorem.}\label{cgt}

In this section, we recall the generation of the category $\HMF^{gr}_{S}(f)$.
We assume that $R$ is a graded Gorenstein local ring of a hypersurface $S/(f)$
which is a graded isolated singularity.
First, we collect some definitions and facts concerning admissible categories and
exceptional collections from \cite{Bondalrep} and \cite{orlov2009derived}.
A $\C$-linear triangulated category $\mathcal{T}$ with the translation functor $T$ 
is called to be of \textit{finite type} if, for any $E,E' \in \mathcal{T}$, 
the space $\Hom_{\mathcal{T}}(E,T^{p}E')$ is of finite rank over $\C$ and 
\[ \sum_{p \in \Z}\dim_{\C}\Hom_{\mathcal{T}}(E,T^{p}E') < \infty.\]

\begin{dfn}
  Let $\mathcal{T}$ be a triangulated category and $\mathcal{T}'$ be a full subcategory
  of $\mathcal{T}$. The right orthogonal to $\mathcal{T}'$ is a full subcategory 
  $(\mathcal{T}')^{\perp}$ of $\mathcal{T}$ consisting of all objects $M$ such that
  $\Hom_{\mathcal{T}}(N,M)=0$ for any $N \in \mathcal{T}'$. 
  A full subcategory $\mathcal{T}'$ is called 
  \textit{right admissible} if, for any $L \in \mathcal{T}$, there exists an exact triangle
  $N \rightarrow L \rightarrow M \rightarrow TN$ with $N \in \mathcal{T}'$ and
  $M \in (\mathcal{T}')^{\perp}$.
\end{dfn}

\begin{dfn}
  Let $\mathcal{T}$ be a $\C$\textrm{-}linear triangulated category of finite type
  with the translation functor $T$.
  \begin{itemize}
    \item[(i)] An object $E$ in $\mathcal{T}$ is called \textit{exceptional} if
    $\Hom_{\mathcal{T}}(E,E)=\C \cdot \mathrm{id}_{E}$ and 
    $\Hom_{\mathcal{T}}(E,T^{p}E)=0$ when $p \neq 0$.
    \item[(ii)] An ordered set $\mathcal{E}=(E_1,\dots,E_l)$ of exceptional objects
    is called an \textit{exceptional collection} if $\Hom_{\mathcal{T}}(E_m,T^{p}E_n)=0$
    for all $p \in \Z$ and $m > n$.
    \item[(iii)] An exceptional collection $\mathcal{E}=(E_1,\dots,E_l)$ is called 
    a \textit{strongly exceptional collection} if $\Hom_{\mathcal{T}}(E_m,T^{p}E_n)=0$
    for all $p \neq 0$ and $m,n=1,\dots,l$.
    \item[(iv)] An exceptional collection $\mathcal{E}=(E_1,\dots,E_l)$ is called
    \textit{full} if the smallest full triangulated subcategory of $\mathcal{T}$ containing
    all elements in $\mathcal{E}$ is equivalent to $\mathcal{T}$ as a triangulated
    category.
  \end{itemize}
\end{dfn}

\begin{pro}[{\cite[Theorem 3.2]{Bondalrep}}]
  Let $\mathcal{T}$ be a $\C$\textrm{-}linear triangulated category of finite type
  and $\mathcal{T}'$ be a full triangulated subcategory generated 
  by an exceptional collection $(E_1,\dots,E_l)$. Then $\mathcal{T}'$ is right
  admissible.
\end{pro}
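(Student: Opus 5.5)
We must prove Bondal's Theorem 3.2: if $\mathcal{T}$ is $\C$-linear of finite type and $\mathcal{T}'=\langle E_1,\dots,E_l\rangle$ is generated by an exceptional collection, then $\mathcal{T}'$ is right admissible. The argument is an induction on $l$. Each step uses the elementary fact that a single exceptional object generates a right admissible subcategory, together with a gluing argument for semi-orthogonal pieces.

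\emph{Step 1: the case $l=1$.} Put $E=E_1$. Since $E$ is exceptional, $\langle E\rangle$ is the full subcategory of finite direct sums $\bigoplus_p V_p\otimes T^{p}E$ with $V_p$ finite-dimensional vector spaces. Indeed, these objects are closed under $T$, and the cone of any morphism between two of them is again of this form: because $\Hom(E,T^{p}E)=0$ for $p\neq 0$ and $\Hom(E,E)=\C$, such a morphism is a direct sum of linear maps $V_p\to W_p$ tensored with $\mathrm{id}_{T^pE}$.

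Given $L\in\mathcal{T}$, finite type lets us form
\[ N:=\bigoplus_{p\in\Z}\Hom_{\mathcal{T}}(T^{p}E,L)\otimes T^{p}E, \]
a finite sum, with the canonical evaluation morphism $N\to L$. Complete it to an exact triangle $N\to L\to M\to TN$ and apply $\Hom(T^{q}E,-)$. Exceptionality identifies $\Hom(T^{q}E,N)$ with $\Hom(T^{q}E,L)$, and the map induced by evaluation is this identification. The long exact sequence then gives $\Hom(T^{q}E,M)=0$ for all $q$, so $M\in\langle E\rangle^{\perp}$.

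\emph{Step 2: the induction step.} Assume the claim for collections of length $l-1$, and put $\mathcal{A}=\langle E_l\rangle$ and $\mathcal{B}=\langle E_1,\dots,E_{l-1}\rangle$.

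For $L\in\mathcal{T}$, first take the triangle $N_1\to L\to M_1\to TN_1$ from Step 1, with $N_1\in\mathcal{A}$ and $M_1\in\mathcal{A}^{\perp}$. Next apply the induction hypothesis to $M_1$ to get $N_2\to M_1\to M\to TN_2$ with $N_2\in\mathcal{B}$ and $M\in\mathcal{B}^{\perp}$.

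Since $\Hom(E_l,T^{p}E_m)=0$ for $m<l$, we have $\mathcal{B}\subset\mathcal{A}^{\perp}$. Hence both $M_1$ and $N_2$ lie in the triangulated subcategory $\mathcal{A}^{\perp}$, and therefore so does $M$. Thus $M\in\mathcal{A}^{\perp}\cap\mathcal{B}^{\perp}=(\mathcal{T}')^{\perp}$, where the last equality holds because $\mathcal{T}'$ is generated by $\mathcal{A}$ and $\mathcal{B}$ and right orthogonals of generators pass to extensions and shifts.

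Now apply the octahedral axiom to the composite $L\to M_1\to M$. It produces an object $N$ with a triangle $N\to L\to M\to TN$, and $N$ sits in a triangle $N_1\to N\to N_2\to TN_1$. So $N$ is an extension of $N_2\in\mathcal{B}$ by $N_1\in\mathcal{A}$, and hence $N\in\mathcal{T}'$.

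The main obstacle is this last octahedral bookkeeping: one must check the directions of the maps and confirm that $N$ really lands in $\mathcal{T}'$. Everything else is formal once finite type guarantees that the sum in Step 1 is finite.
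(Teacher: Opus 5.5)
Your proof is correct and follows essentially the route of Bondal's Theorem~3.2, which the paper cites without reproducing. First, $\langle E\rangle$ is right admissible for a single exceptional object, via the evaluation morphism $\bigoplus_p \Hom_{\mathcal{T}}(T^{p}E,L)\otimes T^{p}E\to L$ (a finite sum by the finite-type hypothesis); then one inducts on $l$ using $\langle E_1,\dots,E_{l-1}\rangle\subset\langle E_l\rangle^{\perp}$ and the octahedral axiom. The octahedral step you flag does work as you state: the cones of $L\to M_1$ and $M_1\to M$ are $TN_1$ and $TN_2$, so the octahedron yields $TN_1\to TN\to TN_2\to T^{2}N_1$, and hence $N\in\mathcal{T}'$.
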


Next, we recall the equivalence between the category of graded maximal Cohen-Macaulay 
modules over $R$ and the category of graded matrix factorizations of $f$. 

\begin{pro}[{\cite[Corollary 6.3]{eisenbud1980homological}}]\label{A}
  The category $\HMF^{gr}_{S}(f)$ is equivalent to $\underline{\CM^{gr}}(R)$ 
  as a triangulated category. The equivalence is given by the functor
  \[\overline{F}=\Bigl(\mf{F_0}{f_0}{f_1}{F_1}\Bigr) \mapsto M:=\Cker(f_1).\]
\end{pro}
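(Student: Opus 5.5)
We must prove Proposition~\ref{A}: the functor sending $\overline{F}$ to $M:=\Cker(f_1)$ is a triangulated equivalence $\HMF^{gr}_{S}(f)\to\underline{\CM^{gr}}(R)$. The plan is to follow Eisenbud's argument, keeping track of gradings; throughout, $\deg f_1=2$ and $\tau^{h}$ accounts for the shift by $\deg f$.

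\textbf{Step 1: $M$ is a graded MCM $R$-module.} Since $f_0f_1=f\cdot\mathrm{id}$ and $f$ is a nonzerodivisor in $S$, the map $f_1$ is injective. Hence $0\to F_1\xrightarrow{f_1}F_0\to M\to0$ is a graded free $S$-resolution. Because $f\cdot F_0=f_1f_0F_0\subset \mathrm{Im} f_1$, $M$ is annihilated by $f$, so it is an $R$-module. Its projective dimension over $S$ is at most $1$, so Auslander--Buchsbaum gives $\mathrm{depth}\,M\ge \dim S-1=d$. Thus $M\in\CM^{gr}(R)$.

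\textbf{Step 2: the functor is well defined.} A morphism $(\phi_0,\phi_1)$ induces $\Cker(f_1)\to\Cker(f'_1)$ via $\phi_0$. Suppose $(\phi_0,\phi_1)$ is null-homotopic, so $\phi_0=h_1f_0+f'_1h_0$. The term $f'_1h_0$ induces zero on cokernels. The term $h_1f_0$ factors as $F_0\otimes R\to F_1\otimes R\to F'_0\otimes R$, i.e.\ through a free $R$-module. Here one checks that $\bar f_0$ descends from $\Cker f_1$: this holds because $f_0f_1=f$ is zero mod $f$, using $\tau^h$ so that the degrees match. So the induced morphism is zero in the stable category. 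Zero objects are sent to $0$ or to $R$, and $R$ is projective.

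\textbf{Step 3: essential surjectivity and full faithfulness.} Given $M\in\CM^{gr}(R)$, its $S$-depth is $d=\dim S-1$, so graded Auslander--Buchsbaum gives a graded free $S$-resolution $0\to F_1\xrightarrow{f_1}F_0\to M\to0$ of equal ranks. Since $fM=0$, we have $fF_0\subset\mathrm{Im} f_1$, and injectivity of $f_1$ yields a unique $f_0$ with $f_1f_0=f$. Then $f_1f_0f_1=f_1 f$, and injectivity again gives $f_0f_1=f$. For fullness, a degree-zero map $M\to M'$ lifts to a chain map $(\phi_0,\phi_1)$ of resolutions. This commutes with $f_0$ by uniqueness, since $f_1$ and $f'_1$ are injective. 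For faithfulness, suppose the induced map factors through a free $R$-module. Then one lifts the factorization to produce homotopies $h_0,h_1$. Concretely, a map $M\to R^n$ lifts to $F_0\to S^n$, and one uses that the matrix factorization of $R$ is trivial, $\Bigl(\mf{S}{f}{1}{\tau^{h}S}\Bigr)$. Unwinding this yields $\phi_0=h_1f_0+f'_1h_0$.

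\textbf{Step 4: triangulated structure.} Check that $T\overline F=(F_1\rightleftarrows F_0)$ maps to $\Cker(\tau^h f_0)$. The sequence $0\to\Cker(\tau^{h}f_0)\to R\otimes F_0\to M\to0$ is exact (compare Proposition~\ref{syztau}), which identifies this object with the cosyzygy of $M$, the translation in $\underline{\CM^{gr}}(R)$. Then show that the image of the mapping cone $C(\Phi)$ is the cone in the Frobenius sense. This follows by writing the pushout of $M\to M'$ along the injective hull $M\hookrightarrow$ free.

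I expect faithfulness in Step 3, the homotopy construction from a factorization through projectives, to be the main obstacle. Handling the cone compatibility requires careful degree bookkeeping, but it is otherwise routine.
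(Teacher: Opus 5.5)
The paper gives no proof of this statement; it only cites Eisenbud. Your Steps 1--3 are Eisenbud's standard argument and are sound in outline. Step 4, however, does not prove what it claims, and this is exactly where the grading matters. A short exact sequence $0\to X\to P\to M\to 0$ with $P$ free exhibits $X$ as the syzygy $\Omega M\simeq T^{-1}M$, not the cosyzygy: the translation in the stable category of a Frobenius category is defined by embedding $M$ into a projective-injective. Moreover, the kernel of $R\otimes F_0=F_0/fF_0\twoheadrightarrow F_0/f_1F_1=M$ is $f_1F_1/f_1f_0F_0$. Since $f_1$ has degree $2$, it identifies this kernel with $\tau^{-h}(F_1/f_0F_0)$, not with $F_1/f_0F_0$, which is the image of $T\overline{F}$. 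So your sequence only shows $\tau^{-h}(\text{image of }T\overline{F})\simeq T^{-1}M$. In the ungraded theory $\Omega M\simeq\Omega^{-1}M$, so this would be harmless. In $\underline{\CM^{gr}}(R)$, however, $\Omega M\simeq\tau^{-h}\Omega^{-1}M$, so to reach your conclusion you would still have to invoke $T^{2}\simeq\tau^{h}$ (Proposition~\ref{syztau}) with the correct sign of the shift.

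The direct fix uses the map you already found in Step 2. The map $f_0$ induces $\bar f_0\colon M\to R\otimes F_1$, which is injective: if $f_0m\in fF_1=f_0f_1F_1$, then $m\in f_1F_1$ because $f_0$ is injective. Its cokernel is $F_1/(f_0F_0+fF_1)=F_1/f_0F_0$. Thus $0\to M\xrightarrow{\bar f_0}R\otimes F_1\to F_1/f_0F_0\to 0$ identifies the image of $T\overline{F}$ with $TM$ on the nose. This $\bar f_0$ is also the embedding into a free module that you need for the pushout description of the cone.

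Two smaller remarks. First, the factorization with $\Cker(f_1)=R$ is $(S\xrightarrow{1}S\xrightarrow{f}S)$, not $(S\xrightarrow{f}\tau^hS\xrightarrow{1}S)$, which has $\Cker(f_1)=0$; both are contractible, so nothing breaks. Second, faithfulness is easy once you isolate the lemma that lifts are unique up to homotopy. If $\phi_0$ induces $0$ on cokernels, then $\phi_0=f'_1h_0$ for some $h_0$, and injectivity of $f'_1$ forces $\phi_1=h_0f_1$, so $(\phi_0,\phi_1)\sim 0$ with $h_1=0$. A factorization through a free module then lifts, up to homotopy, to one through a contractible factorization.
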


The following theorem is necessary to prove our main result.

\begin{thm}[{\cite[Theorem 4.5]{kajiura2009triangulated}}{\cite[Lemma 2.12]{aramaki2020maximally}}]
  \label{categorygenerating}
  Let $\mathcal{T}'$ be a right admissible full triangulated subcategory of 
  $\HMF^{gr}_{S}(f)$, with $R$ is a graded isolated singularity, satisfying the following conditions:
  \begin{itemize}
    \item[(i)] The grading shift functor $\tau$ on $\HMF^{gr}_{S}(f)$
    induces an auto-equivalence of $\mathcal{T}'$,
    \item[(ii)] $\mathcal{T}'$ has an object which is isomorphic to $(R/\mathfrak{m})^{stab}$
    in $\HMF^{gr}_{S}(f)$. 
  \end{itemize}
  Then, the natural fully faithful functor $\mathcal{T}' \rightarrow \HMF^{gr}_{S}(f)$
  is a triangulated equivalence.
\end{thm}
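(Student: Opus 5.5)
Our plan is to go through the Verdier localization / semiorthogonal decomposition picture. By hypothesis $\mathcal{T}'$ is right admissible. For every $L\in\HMF^{gr}_{S}(f)$ this gives an exact triangle $N\to L\to M\to TN$ with $N\in\mathcal{T}'$ and $M\in(\mathcal{T}')^{\perp}$. It therefore suffices to show that $(\mathcal{T}')^{\perp}=0$. Then $L\simeq N$ lies in $\mathcal{T}'$, and since the inclusion is fully faithful and exact it is a triangulated equivalence.

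First we show that $(\mathcal{T}')^{\perp}$ is $\tau$-stable. If $M\in(\mathcal{T}')^{\perp}$ and $N\in\mathcal{T}'$, then $\Hom(N,\tau^{n}M)\simeq\Hom(\tau^{-n}N,M)$. By (i), $\tau^{-n}N\in\mathcal{T}'$, so this space vanishes. Since $\mathcal{T}'$ is triangulated it is also $T$-stable, and so is its right orthogonal. Next, (ii) puts $(R/\mathfrak{m})^{stab}$ and all of its $\tau$- and $T$-shifts in $\mathcal{T}'$. Hence every $M\in(\mathcal{T}')^{\perp}$ satisfies
\[\Hom_{\HMF^{gr}_{S}(f)}\bigl((R/\mathfrak{m})^{stab},T^{p}\tau^{n}M\bigr)=0\quad\text{for all }p,n\in\Z .\]
Summing over $n$ as in (\ref{gradedtoungradedHom}), the ungraded stable Hom and stable Ext groups from the stabilization of the residue field into $M$ all vanish.

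The main step, and the expected obstacle, is to show that this vanishing forces $M\simeq0$. We pass through Proposition~\ref{A} to $\underline{\CM^{gr}}(R)$. There $(R/\mathfrak{m})^{stab}$ corresponds, up to shift, to a high syzygy $\syz^{d}(R/\mathfrak{m})$ of the residue field. For a Gorenstein ring and a maximal Cohen--Macaulay module $M$ we have $\underline{\Hom}(\syz^{d}k,T^{p}M)\simeq\Ext^{d+p}_R(k,M)$ for $d+p>0$. This needs a dimension-shifting argument together with the 2-periodicity of Proposition~\ref{period}, which lets every $p$ be reduced to large positive ones. So all $\Ext^{i}_R(k,M)$ with $i\gg0$ vanish, i.e.\ $M$ has finite injective dimension. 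Since $R$ is Gorenstein, $M$ then has finite projective dimension. A maximal Cohen--Macaulay module of finite projective dimension is free by Auslander--Buchsbaum, hence zero in the stable category.

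The isolated singularity hypothesis enters in making the stable categories Hom-finite, and in justifying that the graded statements above follow from the ungraded ones via the decomposition (\ref{gradedtoungradedExt}). Alternatively, one can argue as in Orlov: the residue field generates the subcategory of finite-length modules, and $D^{b}(\gr\text{-}R)$ modulo perfect complexes is generated by it together with $\tau$-shifts of $R$, which vanish in the singularity category. We would follow the Ext argument, citing \cite{kajiura2009triangulated} for the careful identification of $(R/\mathfrak{m})^{stab}$ with the shifted syzygy of $k$.
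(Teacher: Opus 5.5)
Your proposal is correct and is essentially the argument behind the results the paper cites. The paper gives no proof of its own, only the remark that the result rests on Proposition~\ref{A}; in your version, right admissibility reduces the claim to $(\mathcal{T}')^{\perp}=0$, and an object of that orthogonal becomes a graded maximal Cohen--Macaulay module $M$ with $\Ext^{i}_R(R/\mathfrak{m},M)=0$ for $i\gg 0$, which therefore has finite injective dimension, hence finite projective dimension, hence is free. Three small corrections: $\underline{\Hom}(\syz^{d}(R/\mathfrak{m}),T^{p}M)\simeq\Ext^{d+p}_R(R/\mathfrak{m},M)$ holds only for $p>0$, not for all $d+p>0$ (for $p\le 0$ the left side is Tate cohomology), though large $p$ is all you use; your argument never actually uses the isolated-singularity hypothesis, since (\ref{gradedtoungradedExt}) needs only finite generation and right admissibility is assumed; and your Orlov-style aside is false as stated, because the triangulated, non-thick hull of the shifts of $(R/\mathfrak{m})^{stab}$ can be proper---for $\mu=1$ here all its objects have classes in $\Z\bigl([\overline{F_{\{1\}}}]-[\tau\overline{F_{\{1\}}}]\bigr)\subset K_0\simeq\Z^{2}$, so it misses $\overline{F_{\{1\}}}$---which is exactly why right admissibility must be assumed.
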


We remark that the proof of Theorem~\ref{categorygenerating} relies on the equivalence in Proposition~\ref{A}.
Moreover, by Theorem~\ref{cmserre}, Proposition~\ref{A} and Theorem~\ref{categorygenerating}, 
we obatin the following corollary.

\begin{cor}
  The category $\HMF^{gr}_{S}(f)$ admits the Serre functor $T^{d-1}\tau^{-\varepsilon(R)}$.
\end{cor}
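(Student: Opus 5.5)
The plan is to transport the Serre functor of Theorem~\ref{cmserre} along the triangulated equivalence of Proposition~\ref{A}. Write $\Psi:\HMF^{gr}_{S}(f)\to\underline{\CM^{gr}}(R)$ for the functor $\overline{F}\mapsto\Cker(f_1)$. Our standing assumptions give the hypotheses of Theorem~\ref{cmserre}: $R=S/(f)$ is a graded Gorenstein local ring, hence has the canonical module $K_R\simeq\tau^{-\varepsilon(R)}R$, and it is a graded isolated singularity. So $\mathcal{S}_{\CM}:=T^{d-1}\tau^{-\varepsilon(R)}$ is a Serre functor on $\underline{\CM^{gr}}(R)$. Fix a quasi-inverse $\Psi^{-1}$ and define
\[\mathcal{S}:=\Psi^{-1}\circ\mathcal{S}_{\CM}\circ\Psi.\]
Since $\Psi$ is fully faithful and essentially surjective, properties (i) and (ii) of Theorem~\ref{cmserre} transfer formally. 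For $\overline{F},\overline{F'}$ we have $\HMF^{gr}_{S}(f)(\overline{F},\overline{F'})\simeq\underline{\Hom}_{\gr\text{-}R}(\Psi\overline{F},\Psi\overline{F'})$, and these isomorphisms are compatible with composition. Hence the nondegenerate pairing upstairs pulls back to a nondegenerate pairing
\[\HMF^{gr}_{S}(f)(\overline{F},\overline{F'})\otimes\HMF^{gr}_{S}(f)(\overline{F'},\mathcal{S}\overline{F})\to\C .\]

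The remaining point is to identify $\mathcal{S}$ with $T^{d-1}\tau^{-\varepsilon(R)}$ computed inside $\HMF^{gr}_{S}(f)$. For this it suffices to check that $\Psi$ commutes, up to natural isomorphism, with both $\tau$ and $T$.

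For $\tau$ this is immediate: $\Cker(\tau(f_1))=\tau\Cker(f_1)$, and morphisms are shifted compatibly.

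For $T$, observe that $\Psi(T\overline{F})=\Cker(-\tau^{h}(f_0))\simeq\tau^{h}\Cker(f_0)$. Then:
\begin{itemize}
  \item[(a)] The exact sequence $0\to\Cker(f_1)\to F_1\otimes R\to F_0\otimes R\to\Cker(f_0)$-type sequence coming from the periodic resolution (Proposition~\ref{period}) exhibits $\Cker(f_1)$ as a syzygy of $\tau^{h}\Cker(f_0)$, up to the degree bookkeeping forced by $f_1$ having degree two.
  \item[(b)] Combining this with Proposition~\ref{syztau} ($\syz^2\simeq\tau^{-h}$) and Proposition~\ref{T2} ($T^2=\tau^h$) identifies $\tau^{h}\Cker(f_0)$ with $T\,\Cker(f_1)$ in $\underline{\CM^{gr}}(R)$.
\end{itemize}
Since Proposition~\ref{A} already asserts that $\Psi$ is a \emph{triangulated} equivalence, this compatibility $\Psi T\simeq T\Psi$ is part of its structure, and I would simply cite it.

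With both compatibilities in hand, $\Psi T^{d-1}\tau^{-\varepsilon(R)}\simeq\mathcal{S}_{\CM}\Psi$. Therefore $\mathcal{S}\simeq T^{d-1}\tau^{-\varepsilon(R)}$ on $\HMF^{gr}_{S}(f)$, which is an auto-equivalence satisfying (i) and (ii), proving the corollary.

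The only real obstacle I expect is the degree bookkeeping in step (a). One must check that the normalizations agree: $f_1$ has degree two, the cokernel is taken of $f_1$, and the shift convention in $T\overline{F}$ involves $\tau^{h}(f_0)$. These must combine so that $\Psi T$ matches the suspension of the Frobenius category $\CM^{gr}(R)$ exactly, and not its inverse or a $\tau$-twist of it. If the signs of the shifts turn out reversed, I would correct by composing with the appropriate power of $\tau$, using $T^2=\tau^h$. The form $T^{d-1}\tau^{-\varepsilon(R)}$ is stable under such a consistent renormalization.
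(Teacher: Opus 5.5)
Your proposal is correct and follows the same route as the paper, which obtains the corollary by transporting the Serre functor of Theorem~\ref{cmserre} along the triangulated equivalence of Proposition~\ref{A}; the paper also lists Theorem~\ref{categorygenerating}, which is not actually needed for this step. Your explicit check that $\Psi$ commutes with $\tau$, and with $T$ because $\Psi$ is exact, fills in the step the paper leaves implicit. The hedging about degree bookkeeping in (a) is unnecessary once you cite that $\Psi$ is a triangulated equivalence.
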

\section{Main results.}\label{mainr}

In this section, we shall only consider a weighted homogeneous polynomial which defines
the $A_{\mu}$-singularity at the origin in three variables:
\[ f(x,y,z) = x^{\mu + 1} + yz.\]
For a hypersurface $V_0:=\{f(x,y,z)=0\}$, the semi-universal deformation of $V_0$ is given
by the holomorphic function
\[ \tilde{f}(x,y,z,t_1,\dots,t_\mu) = x^{\mu + 1} + yz + \sum_{i=1}^{\mu} t_i x^{\mu-i} ,\]
where $t_1,\dots,t_\mu$ are coordinates on the base space $\C^{\mu}$ of the deformation.
It is known that the base space $\C^{\mu}$ of the semi-universal deformation associated to $f$
can be identified with $\mathfrak{H}/W$, where $\mathfrak{H}$ is a subspace of $\C^{\mu+1}$ defined by
\[ \mathfrak{H}:=\Bigl\{ {\mathbf s}=(s_1,\dots,s_{\mu+1})\in \C^{\mu+1}\ |
\ \sum_{i=1}^{\mu+1} s_i=0\Bigr\}\]
and $W$ is the Weyl group of the root system of type $A_{\mu}$.
The action of the Weyl group $W$  on $\mathfrak{H}$ is a permutation of coordinates.
We have the isomorphism $\mathfrak{H}/W \rightarrow \C^{\mu}$ given by
\[ {\mathbf s} \mapsto (I_2({\mathbf s}),...,I_{\mu+1}({\mathbf s})),\]
where $I_{k}({\mathbf s})$ is an elementary symmetric polynomial of degree $k$ in 
$(\mu+1)$-variables $s_1,\dots,s_{\mu+1}$ for all $k=2,\dots,\mu+1$.

From now on, we define the category of garaded matrix factorizations associated to the holomorphic function
$\tilde{f}$. The weighted homogeneous polynomial $f$ has the weight system
$(1,b,\mu+1-b;\mu+1)$, $1 \le b \le \mu$. However, in general, 
the holomorphic function $\tilde{f}$ is not weighted homogeneous with respect to the variables
$x,y$ and $z$. Now, we introduce the formal variable $q$ with degree $2/h$ and consider
a weighted homogeneous polynomial associated to $\tilde{f}$ which is given by 
\[\tilde{f}_q(x,y,z,q) = x^{\mu+1} + yz + \sum_{i=1}^{\mu} t_i x^{\mu-i} q^{i+1}. \]
Note that the polynomial $\tilde{f_q}$ can be factored as follows:
\[\tilde{f_q}(x,y,z,q) = \prod_{i=1}^{m}(x+s_i q)^{n_i} + yz\]
where $s_i \neq s_j$ for all $i \neq j$ and $\sum_{i=1}^{m}n_i=\mu+1$.

\begin{dfn}\label{genericpara}
  We call a parameter $(t_1,\dots,t_\mu) \in \C^{\mu}$ \textit{generic} 
  if $(s_1,\dots,s_{\mu+1}) \in \mathfrak{H}$
  corresponding to $(t_1,\dots,t_\mu)$ through the isomorphism 
  $\mathfrak{H}/W \rightarrow \C^{\mu}$ is in the configuration space
  \[ \mathrm{Conf}_{\mu+1}(\C) = \{ (s_1,\dots,s_{\mu+1}) \in \C^{\mu+1}\ |
  \ s_i \neq s_j (i \neq j )\}.\]
\end{dfn}

In the rest of this paper, we focus on the category of graded matrix factorizations of $\tilde{f_q}$
for a \textit{generic} parameter and provide the details of its triangulated structure.
For the sake of simplicity in what follows,
we recall the following result which is called \textit{Kn\"{o}rrer periodicity}.

\begin{pro}[{\cite[Theorem 3.1]{Knorrerperiod}}]\label{Knorrer}
  Let $R$ be a hypersurface $S/(f)$ and set $R_2:=S[y,z]/(f+yz)$.
  We heve an equivalence between the triangulated categories $\HMF^{gr}_{S}(f)$ and
  $\HMF^{gr}_{S[y,z]}(f+yz)$.
\end{pro}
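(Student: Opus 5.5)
The plan is to realize the equivalence by tensoring with a fixed rank-one factorization of $yz$, and then to prove full faithfulness and essential surjectivity separately, following Kn\"{o}rrer's original argument adapted to the $(2\Z/h)$-graded setting.

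\textbf{Step 1: the functor.} Put $S' := S[y,z]$ with $\deg y = 2b/h$ and $\deg z = 2 - 2b/h$, so that $yz \in S'_2$ and $f + yz$ is weighted homogeneous. Let $\overline{K}$ be the graded matrix factorization of $yz$ over $\C[y,z]$ given by $\bigl(\mf{\C[y,z]}{y}{z}{\tau^{b}\C[y,z]}\bigr)$, with the twist chosen so that $y$ has degree zero and $z$ has degree two as required. I define
\[
\Psi(\overline{F}) := \overline{F} \otimes \overline{K},
\]
the $\Z/2$-graded tensor product. Concretely, its even part is $F_0 \otimes K_0 \oplus F_1 \otimes K_1$, its odd part is $F_1 \otimes K_0 \oplus F_0 \otimes K_1$, and its differentials are the usual Koszul-sign combinations of $f_0, f_1, y, z$. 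It squares to $(f + yz)\cdot \mathrm{id}$, sends homotopies to homotopies, and commutes with $\tau$ and $T$. It sends mapping cones to mapping cones, so $\Psi$ is a triangulated functor $\HMF^{gr}_{S}(f) \to \HMF^{gr}_{S'}(f + yz)$.

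\textbf{Step 2: full faithfulness.} The morphism complex of $\Psi(\overline{F})$ and $\Psi(\overline{G})$ is the $\Z/2$-graded $\Hom$-complex $\underline{\Hom}(\overline{F}, \overline{G}) \otimes_{\C} \underline{\Hom}(\overline{K}, \overline{K})$, where the second factor is taken over $\C[y,z]$. The endomorphism complex of $\overline{K}$ is the Koszul-type complex on $y, z$. I will exhibit an explicit contracting homotopy that retracts it, degree by degree, onto its cohomology $\C[y,z]/(y,z) = \C$, concentrated in degree zero and spanned by the identity. Because the retraction is $\C$-linear and preserves the internal grading, it tensors with $\underline{\Hom}(\overline{F}, \overline{G})$. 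Taking degree-zero cohomology then gives
\[
\HMF^{gr}_{S}(f)(\overline{F}, \overline{G}) \simeq \HMF^{gr}_{S'}(f+yz)(\Psi\overline{F}, \Psi\overline{G}).
\]
This isomorphism is induced by $\Psi$.

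\textbf{Step 3: essential surjectivity.} Define $\Phi$ in the other direction by setting $y = z = 0$ in a matrix factorization of $f + yz$; the result is a graded matrix factorization of $f$ over $S$. The key claim, which is Kn\"{o}rrer's formula, is
\[
\Psi\Phi(\overline{N}) \simeq \overline{N} \oplus T\tau^{c}\overline{N}
\]
for a suitable integer $c$ determined by $b$. To prove it, I will write $\overline{N} \otimes \overline{K}$ as a factorization of $(f + yz) + (-yz)$ twisted appropriately, and then use the change of variables $y \mapsto -y$ together with the explicit homotopy that kills the $(y,z)$-directions. This yields an isomorphism between $\Psi\Phi(\overline{N})$ and $\overline{N} \otimes_{S'} (\text{stabilization of } S'/(y,z))$, which splits as two shifted copies of $\overline{N}$.

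Given the formula, $\overline{N}$ is a direct summand of $\Psi(\Phi\overline{N})$. Since all $\Hom$-spaces are finite dimensional and gradings are bounded, both categories are Krull--Schmidt. Decomposing $\Phi\overline{N} = \bigoplus_j \overline{F}_j$ into indecomposables, full faithfulness makes each $\Psi\overline{F}_j$ indecomposable. Uniqueness of decomposition then forces every indecomposable summand of $\overline{N}$ to be some $\Psi\overline{F}_j$. Hence $\Psi$ is essentially surjective and therefore an equivalence.

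\textbf{Main obstacle.} I expect the hard step to be establishing $\Psi\Phi(\overline{N}) \simeq \overline{N} \oplus T\tau^{c}\overline{N}$ with the correct grading shifts. In the ungraded case this is a direct matrix manipulation. In the $(2\Z/h)$-graded case, one must check that every base change and homotopy used is homogeneous of degree zero, and this is exactly where the weights of $y$ and $z$ (summing to $2$) are needed. I would first carry out the computation for rank-one $\overline{N}$ to pin down $c$, and then observe that the same block matrices work entrywise for arbitrary rank.
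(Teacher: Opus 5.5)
Your overall architecture is sound, but the proof you sketch for the key formula of Step~3 does not work; the paper itself offers nothing to compare against beyond its citation of Kn\"{o}rrer.

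\textbf{What is fine.} $\Psi$ is exact and commutes with $\tau$ and $T$. In Step~2, $\underline{\Hom}(\overline K,\overline K)$ is indeed the folded Koszul complex of $(y,z)$ with finite-dimensional graded pieces. It therefore retracts homogeneously onto $\C\cdot\mathrm{id}_{\overline K}$, and $\phi\mapsto\phi\otimes\mathrm{id}_{\overline K}$ is a homotopy equivalence. The Krull--Schmidt deduction at the end of Step~3 is also fine.

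\textbf{The gap.} Your route does not produce $\Psi\Phi(\overline N)\simeq\overline N\oplus T\tau^{c}\overline N$. The object to be split is $\Psi\Phi(\overline N)=\Phi(\overline N)\otimes\overline K$, not $\overline N\otimes\overline K$. Tensoring $\overline N$ over $\C[y,z]$ with a factorization of $-yz$ gives a factorization of $f$ over $S'$. That belongs to a different construction (an inverse functor into $\HMF^{gr}_{S}(f)$ via restriction of scalars), and the substitution $y\mapsto -y$ plays no role. Nor does your comparison object split into two copies. If the ``stabilization of $S'/(y,z)$'' is its Koszul resolution, then $\overline N\otimes_{S'}\mathrm{Kos}(y,z)$ is the iterated cone of $y\cdot\mathrm{id}$ and $z\cdot\mathrm{id}$ on $\overline N$. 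It splits into \emph{four} shifted copies of $\overline N$, with rank $4\,\rk\overline N$ against $2\,\rk\overline N$ for $\Psi\Phi(\overline N)$. If instead it is a stabilization with respect to $\pm yz$, the tensor product is a factorization of $f\pm 2yz$ or of $f$, not an object of $\HMF^{gr}_{S'}(f+yz)$.

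\textbf{The fix.} Take the cone of multiplication by a single variable; an elementary matrix computation then does the job. Write $\overline N=(N_0\xrightarrow{n_0}N_1\xrightarrow{n_1}N_0)$, $n'_i=n_i|_{y=0}$ and $\bar n_i=n_i|_{y=z=0}$. Set $n_i=n'_i+y\beta_i$ and $n'_i=\bar n_i+z\gamma_i$ with homogeneous $\beta_i,\gamma_i$. Comparing $n_1n_0=f+yz$ with $n'_1n'_0=f$ gives $n'_1\beta_0+\beta_1n_0=z$. Comparing $n'_0n'_1=f$ with $\bar n_0\bar n_1=f$ gives $\gamma_0\bar n_1+n'_0\gamma_1=0$. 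Hence
\[
\begin{pmatrix}1&0\\-\gamma_0&1\end{pmatrix}\begin{pmatrix}1&\beta_1\\0&1\end{pmatrix}\begin{pmatrix}-n_1&0\\y&n_0\end{pmatrix}\begin{pmatrix}1&-\beta_0\\0&1\end{pmatrix}\begin{pmatrix}1&0\\\gamma_1&1\end{pmatrix}=\begin{pmatrix}-\bar n_1&z\\y&\bar n_0\end{pmatrix}.
\]
The middle factor is the first matrix of $C(y\cdot\mathrm{id}_{\overline N})$. After changing the signs of some basis vectors, the right-hand side is the odd-to-even matrix of $\Phi(\overline N)\otimes\overline K$. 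A matrix factorization is determined up to isomorphism by either of its matrices, so $C(y\cdot\mathrm{id}_{\overline N})\simeq T^{i}\tau^{j}\Psi\Phi(\overline N)$ for suitable $i,j$.

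Finally, $y=\partial_z(f+yz)$, so $y\cdot\mathrm{id}_{\overline N}$ is null-homotopic via $(\partial_z n_0,\partial_z n_1)$. The cone therefore splits as $\overline N\oplus T\tau^{-b}\overline N$. Thus $\Psi\Phi(\overline N)$ is a sum of two objects of the form $T^{i}\tau^{j}\overline N$. That is exactly what your Krull--Schmidt argument needs, since the essential image of $\Psi$ is closed under $T$ and $\tau$.
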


By Proposition \ref{Knorrer}, putting $S:=\C[x,q]$ and 
\[\tilde{f}_q(x,q) = x^{\mu+1} + \sum_{i=1}^{\mu} t_i x^{\mu-i} q^{i+1},\]
we consider the category $\HMF^{gr}_{S}(\tilde{f_q})$ instead of the category 
$\HMF^{gr}_{S[y,z]}(\tilde{f_q}+yz)$.

Before stating our main result, we would like to emphasize that,
due to the formal variable $q$ introduced for the deformation,
the corresponding hypersurface $S/(\tilde{f_q})$ defines a graded isolated singularity
when a deformation parameter $(t_1,\dots,t_\mu)$ is generic.

\begin{pro}
  The Gorenstein local ring $S/(\tilde{f_q})$ is a graded isolated singularity
  if and only if a parameter $(t_1,\dots,t_\mu)$ is generic.
\end{pro}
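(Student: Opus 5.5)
We reduce the statement to a Jacobian criterion in the two variables $x,q$. Since $\tilde{f_q}\in S=\C[x,q]$ is weighted homogeneous, $R=S/(\tilde{f_q})$ is a graded isolated singularity exactly when the only graded prime $\mathfrak{p}\neq\mathfrak{m}$ at which $R_{\mathfrak{p}}$ could fail to be regular does not occur. By the Jacobian criterion, $R_{\mathfrak p}$ is singular exactly when $\mathfrak{p}\supseteq(\tilde{f_q},\partial_x\tilde{f_q},\partial_q\tilde{f_q})$. The Euler relation $\tilde{f_q}=\frac{1}{h}\bigl(x\partial_x\tilde{f_q}+q\partial_q\tilde{f_q}\bigr)$ (here $w_x=w_q=1$, $h=\mu+1$) shows that $\tilde{f_q}$ lies in the Jacobian ideal. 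So the singular locus is $V(\partial_x\tilde{f_q},\partial_q\tilde{f_q})$, which is a cone, i.e.\ a union of lines through the origin in $\C^2$. Hence $R$ is a graded isolated singularity if and only if this common zero locus is $\{0\}$.

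We use the factorization $\tilde{f_q}=\prod_{i=1}^{m}(x+s_iq)^{n_i}$ with distinct $s_i$, where $\sum_i n_i=\mu+1$ and the $s_i$ are the roots counted with multiplicities $n_i$ (the coordinates of $\mathbf s\in\mathfrak H$ up to sign).

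For the direction ``generic $\Rightarrow$ isolated'', genericity means all $n_i=1$, so $\tilde{f_q}$ is a product of $\mu+1$ pairwise non-proportional linear forms. Suppose $(x_0,q_0)\neq 0$ is a common zero of both partials. By the Euler relation it is also a zero of $\tilde{f_q}$, so exactly one factor $\ell_j=x+s_jq$ vanishes there. Then
\[
\nabla\tilde{f_q}(x_0,q_0)=\Bigl(\prod_{i\neq j}\ell_i(x_0,q_0)\Bigr)\nabla\ell_j,
\]
which is nonzero because $\nabla\ell_j=(1,s_j)\neq 0$ and the other factors do not vanish. This is a contradiction, so the locus is $\{0\}$.

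For the converse, if some $n_j\ge 2$, then $(x+s_jq)^2$ divides $\tilde{f_q}$. Both partials then vanish on the whole line $x=-s_jq$. The corresponding graded prime $\mathfrak p=(x+s_jq)\neq\mathfrak m$ then has $R_{\mathfrak p}$ non-regular: $R_{\mathfrak p}=S_{\mathfrak p}/(\ell_j^{n_j}u)$ with $u$ a unit, which is non-reduced, hence not regular.

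The only delicate point is the bookkeeping between ``generic'' in Definition~\ref{genericpara} (distinct $s_i$ in $\mathfrak H$) and the factorization. One has to check, via the isomorphism $\mathfrak H/W\to\C^\mu$, that the coefficients $t_i$ are, up to sign, elementary symmetric functions of the $s_i$. It follows that $\tilde{f_q}(x,q)=\prod_{i=1}^{\mu+1}(x+s_iq)$, and the vanishing of the $x^{\mu}q$ coefficient matches $\sum_i s_i=0$. I expect this identification to be routine but it is the step to state carefully.
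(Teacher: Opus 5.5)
Your proof is correct and follows the same route as the paper. Euler's identity puts any singular point on some line $x+s_jq=0$. In the generic case the product rule then forces the point onto a second line, hence to the origin. In the non-generic case a squared factor $(x+s_jq)^2$ makes both partials vanish along a whole line.

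Your additions make explicit what the paper leaves implicit: you translate the vanishing locus into graded primes via the Jacobian criterion, and you show non-regularity of $R_{\mathfrak p}$ directly through non-reducedness. You also check that $t_i=I_{i+1}(\mathbf s)$ for the factorization $\prod_{i}(x+s_iq)$; in fact this holds exactly, so your ``up to sign'' hedge is not needed.
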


\begin{proof}
  We shall prove both directions. First, we prove the necessity.
  It suffices to prove the contrapositive.
  We suppose that a parameter is not generic.
  There exists a number $k \in \{1,...,\mu+1\}$ such that
  \[\tilde{f_q}(x,y,z,q) = (x+s_k q)^2 g(x,q)\]
  where $g(x,q)$ is a weighted homogeneous polynomial. The partial derivatives are:
  \[\frac{\partial \tilde{f_q}}{\partial x}=(x+s_k q)\Big\{ 2g(x,q)+(x+s_k q)\frac{\partial g}{\partial x} \Big\}\]
  and
  \[\frac{\partial \tilde{f_q}}{\partial q}=(x+s_k q)\Big\{ 2s_k g(x,q)+(x+s_k q)\frac{\partial g}{\partial q} \Big\}.\]
  At any point $(x_0,q_0)$ on the line defined by $x+s_k q=0$, all partial derivatives vanish.
  Thus, the origin is not an isolated singularity. 
  
  Conversely, we suppose that a parameter is generic.
  By Definition~\ref{genericpara}, the polynomial $\tilde{f_q}$ can be factored as follows:
  \[\tilde{f_q}(x,y,z,q) = \prod_{i=1}^{\mu+1}(x+s_i q)\]
  where $s_i \neq s_j$ for all $i \neq j$.
  By Euler's identity of $\tilde{f_q}$, we have:
  \begin{equation}\label{Euleridnt}
    (\mu+1)\tilde{f_q}(x, q)= x\frac{\partial \tilde{f_q}}{\partial x} + q\frac{\partial \tilde{f_q}}{\partial q}.
  \end{equation}
  This implies that any point where the partial derivatives vanish automatically satisfies $\tilde{f_q} = 0$.
  If there exists a singular point $(x_0,q_0)\neq 0$, by the equation~\ref{Euleridnt},
  the point $(x_0,q_0)$ must lie on at least one of the lines we denote by $x+s_{i_0} q=0$.
  Because the partial derivatives satisfy
  \[\frac{\partial \tilde{f_q}}{\partial x}\Big|_{(x_0,q_0)}
  =\Big\{
  \prod_{j \neq i_0}(x+s_j q)+ (x+s_{i_0} q)\Big(\prod_{j \neq i_0}(x+s_j q)\Big)'
  \Big\}\Big|_{(x_0,q_0)}=0\]
  and
  \[\frac{\partial \tilde{f_q}}{\partial q}\Big|_{(x_0,q_0)}
  =\Big\{
  s_{i_0}\prod_{j \neq i_0}(x+s_j q)+ (x+s_{i_0} q)\Big(\prod_{j \neq i_0}(x+s_j q)\Big)'
  \Big\}\Big|_{(x_0,q_0)}=0,\]
  then we have
  \[\prod_{j \neq i_0}(x_0+s_j q_0)=0.\]
  This implies that $x_0+s_j q_0=0$ for some other index $j \neq i_0$.
  This means that the point $(x_0,q_0)$ is an intersection of two distinct lines.
  Therefore, no singular point exists other than origin.
\end{proof}

\subsection{Graded matrix factorizations of rank $1$}

In this subsection, we give graded matrix factorizations of $\tilde{f_q}$ of rank $1$ explicitly.
Given a subset $I$ of $\{1,\dots,\mu+1\}$, we can define the graded matrix 
factorization $\overline{F_{I}}$ of rank $1$ as follows:
\[ \overline{F_{I}}:=\Bigl(\mf{S}{f_0}{f_1}{\tau^{|I|}S}\Bigr),\]
where 
\[ f_0=\prod_{i\in I}(x+s_iq)\ ,\ f_1=\prod_{j\notin I}(x+s_jq)\]
and $|I|$ is the number of elements of $I$. Note that 
$\overline{F_{I}}$ becomes a zero object in $\HMF^{gr}_{S}(\tilde{f_q})$
if $I$ is an empty set or $\{1,\dots,\mu+1\}$ itself.
We immediately obtain the following lemma.

\begin{lem}
  There exists an one\textrm{-}to\textrm{-}one correspondence between
  the set of all graded matrix factorizations of rank $1$ up to the grading shift and
  the power set of $\{1,\dots,\mu+1\}$ except for an empty set and $\{1,\dots,\mu+1\}$.
\end{lem}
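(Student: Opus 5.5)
The statement has two halves: every graded matrix factorization of rank $1$ has the form $\overline{F_I}$ up to $\tau$, and $I\mapsto\overline{F_I}$ is injective once the empty set and the full set are excluded. The plan relies on $S=\C[x,q]$ being a graded UFD and on the generic factorization $\tilde{f_q}=\prod_{i=1}^{\mu+1}(x+s_iq)$ into pairwise non-associate homogeneous linear factors, which holds because the $s_i$ are distinct.

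\emph{Surjectivity.} Let $\overline{F}=\bigl(\mf{\tau^{t}S}{f_0}{f_1}{\tau^{\bar t}S}\bigr)$ be a graded matrix factorization of rank $1$. Applying $\tau^{-t}$, we may assume $t=0$. The maps are multiplication by homogeneous polynomials $f_0,f_1\in S$ with $f_1f_0=\tilde{f_q}$. By unique factorization in $S$, and since the factors $x+s_iq$ are pairwise non-associate primes, there is a subset $I\subseteq\{1,\dots,\mu+1\}$ and a constant $c\in\C^{*}$ with
\[ f_0=c\prod_{i\in I}(x+s_iq),\qquad f_1=c^{-1}\prod_{j\notin I}(x+s_jq). \]
The degree condition on $f_0$ (degree zero as a map $S\to\tau^{\bar t}S$) forces $\bar t=|I|$, because each linear factor has degree $2/h$. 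The pair $(\mathrm{id},c^{-1})$ is then an isomorphism $\overline{F}\simeq\overline{F_I}$ in $\MF^{gr}_{S}(\tilde{f_q})$. If $I=\emptyset$ or $I=\{1,\dots,\mu+1\}$, one of $f_0,f_1$ is a unit and $\overline{F}$ is a zero object, as noted above. Otherwise $\overline{F}$ is reduced and lies in the image.

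\emph{Injectivity.} Suppose $\overline{F_I}\simeq\tau^{n}\overline{F_J}$ in $\HMF^{gr}_{S}(\tilde{f_q})$ with $I,J$ proper and nonempty. Passing through Proposition~\ref{A}, the functor $\overline{F}\mapsto\Cker(f_1)$ sends these objects to $S/(\prod_{j\notin I}(x+s_jq))$ and a grading shift of $S/(\prod_{j\notin J}(x+s_jq))$. These are indecomposable non-free modules over $R=S/(\tilde{f_q})$, because $\Cker$ of a reduced rank-one factorization has no free summand. In the stable category, an isomorphism between graded maximal Cohen--Macaulay modules without free summands lifts to an isomorphism of modules up to free summands, and none occur here. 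Since the annihilator of a cyclic module is an isomorphism invariant, the ideals $(\prod_{j\notin I}(x+s_jq))$ and $(\prod_{j\notin J}(x+s_jq))$ coincide. Unique factorization then gives $I=J$, and comparing degrees gives $n=0$.

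The main obstacle is the injectivity step, specifically the lift from isomorphism in $\HMF^{gr}_{S}(\tilde{f_q})$ to an honest module isomorphism. If one prefers to avoid citing this, there is a direct route. Take a homotopy equivalence $\Phi=(\phi_0,\phi_1)$ with inverse $\Psi$. The composite $\Psi\Phi-\mathrm{id}$ is null-homotopic, so $\phi_0\psi_0=1+h_1f_0+f_1h_0$. For proper nonempty $I$, both $f_0$ and $f_1$ have positive degree. A degree-zero scalar comparison then forces $\phi_0\in\C^{*}$. The commutation relation $\phi_1f_0=f'_0\phi_0$ then identifies $f_0$ with $f'_0$ up to a unit, which again gives $I=J$ and $n=0$.
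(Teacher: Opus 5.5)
Your proposal is correct and follows the argument the paper leaves implicit when it says the lemma is obtained ``immediately'': unique factorization of $\tilde{f_q}=\prod_{i=1}^{\mu+1}(x+s_iq)$ into pairwise non-associate linear factors forces every rank-one factorization to be $\overline{F_I}$ up to $\tau$ and the rescaling $(\mathrm{id},c^{-1})$. The paper never spells out injectivity. Your annihilator argument via Proposition~\ref{A} fills that gap, and so does your more elementary degree argument: it gives $\psi_0\phi_0=1$, hence $n=0$, and then mutual divisibility of $f_0$ and $f'_0$ once you use the commutation relations of both $\Phi$ and $\Psi$.
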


We need some explicit data of morphisms between two graded matrix factorizations 
of rank $1$. For this purpose, we caluculate the spectrum
$\mathfrak{sp}(\overline{F_I},\overline{F_J})$ defined 
in Definition \ref{phasespectrum}. 

\begin{lem}\label{I=J}
  We suppose that $I=J$. We represent the spectrum $\mathfrak{sp}(\overline{F_I})$ as 
  $\{ p_0 \le \dots \le p_k\}$ for some $k \in \Z_{\ge 0}$. Then, one has
  \[ 0 = p_0 \le \dots \le p_k = 2(\mu-1),\]
  for any subset $I \subsetneq \{1,\dots,\mu+1\}$.
\end{lem}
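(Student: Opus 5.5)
The plan is to compute the graded endomorphism algebra $\bigoplus_{n\in\Z}\HMF^{gr}_{S}(\tilde{f_q})(\overline{F_I},\tau^{n}\overline{F_I})$ explicitly, and then read off the spectrum. Throughout, $S=\C[x,q]$ with $\deg x=\deg q=2/h$ and $h=\mu+1$. We set $a:=|I|$ and $b:=\mu+1-|I|$. Since $I$ is nonempty and proper, we have $a,b\ge 1$. Since $\phi(\overline{F_I},\tau^{n}\overline{F_I})=\phi(\tau^n\overline{F_I})-\phi(\overline{F_I})=2n$, it suffices to show two things: the space $\HMF^{gr}_{S}(\tilde{f_q})(\overline{F_I},\tau^{n}\overline{F_I})$ is nonzero exactly for $0\le n\le \mu-1$, and it is nonzero at both endpoints.

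\emph{Step 1 (morphisms).} A morphism $\overline{F_I}\to\tau^{n}\overline{F_I}$ is a pair $(\phi_0,\phi_1)$ of rank-one maps, given by homogeneous polynomials of degree $2n/h$, with $\phi_1 f_0=f_0\phi_0$ and $\phi_0 f_1=f_1\phi_1$. Since $S$ is a domain, this forces $\phi_0=\phi_1=:g\in S_{2n/h}$. Conversely, any such $g$ gives a morphism.

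\emph{Step 2 (homotopies).} By the definition of homotopy, $(g,g)\sim 0$ if and only if $g=h_1f_0+f_1h_0$ for homogeneous $h_0,h_1$ of the appropriate degrees. Checking the two homotopy equations gives the same condition, since both components are $h_1f_0+f_1h_0$ in rank one. Hence
\[ \bigoplus_{n}\HMF^{gr}_{S}(\tilde{f_q})(\overline{F_I},\tau^{n}\overline{F_I})\simeq S/(f_0,f_1),\]
with the summand for $n$ identified with the homogeneous piece of $x$-degree $n$, i.e. the span of monomials $x^iq^j$ with $i+j=n$.

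\emph{Step 3 (Hilbert series).} This is the only point where genericity enters. Because the $s_i$ are pairwise distinct (Definition~\ref{genericpara}), $f_0=\prod_{i\in I}(x+s_iq)$ and $f_1=\prod_{j\notin I}(x+s_jq)$ share no linear factor. They are therefore coprime in the UFD $S$, so $(f_0,f_1)$ is a regular sequence of forms of degrees $a$ and $b$ in two variables of weight one. The Koszul complex then gives the Hilbert series
\[ \frac{(1-t^{a})(1-t^{b})}{(1-t)^{2}}=(1+t+\dots+t^{a-1})(1+t+\dots+t^{b-1}).\]
This polynomial has nonnegative coefficients, constant term $1$, and leading term $t^{a+b-2}=t^{\mu-1}$ with coefficient $1$. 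Its degree-$n$ coefficient is positive precisely for $0\le n\le\mu-1$.

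The spectrum is therefore supported on $\{2n\mid 0\le n\le \mu-1\}$, with $p_0=0$ (the identity, $n=0$) and $p_k=2(\mu-1)$ (the one-dimensional socle of the complete intersection). The only step needing care is the coprimality in Step 3. I expect no real obstacle beyond keeping the degree conventions consistent: phases scale as $2n$, homotopies have the shifted degrees, and all indices are measured in units of $2/h$.
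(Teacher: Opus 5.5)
Your proof is correct, but it takes a different route from the paper's.

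\textbf{The paper's route.} The paper proves only the two vanishing statements. For $n<0$ it uses that $S$ has no negative-degree part. For $n\ge\mu$ it reduces to $n=\mu$ and dehomogenizes at $q=1$. It then takes a B\'ezout identity $a f_0(x,1)+b f_1(x,1)=1$ and uses division with remainder plus a degree count. This writes every $g\in S_{2\mu/h}$ as $r f_0+s f_1$, so $(g,g)$ is null-homotopic.

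\textbf{Your route.} You identify the whole graded endomorphism space $\bigoplus_{n}\HMF^{gr}_{S}(\tilde{f_q})(\overline{F_I},\tau^{n}\overline{F_I})$ with $S/(f_0,f_1)$. You then read everything off from the Hilbert series $(1+t+\dots+t^{a-1})(1+t+\dots+t^{b-1})$ of this complete intersection of two coprime forms.

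\textbf{Comparison.} The paper's argument is more hands-on: a one-variable Euclidean algorithm, with no Koszul complex or Hilbert series. However, it only establishes the bounds $0\le p_0$ and $p_k\le 2(\mu-1)$. That both endpoints are attained is left implicit there: the identity at $n=0$ and, less obviously, a nonzero class at $n=\mu-1$. Your computation proves both. It also treats all $n\ge\mu$ at once, with no reduction to $n=\mu$. Finally, it gives the full multiplicities of the spectrum: the value $2n$ occurs with multiplicity equal to the coefficient of $t^n$ in that product.
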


\begin{proof} 
  It is enough to show that the space of morphisms 
  $\HMF^{gr}_{S}(\tilde{f_q})(\overline{F_{I}},\tau^{n}\overline{F_{I}})$
  is zero for $n < 0$ and $ n \ge \mu$.
  Since the part of negative degree of $S$ is zero, we have
  $\HMF^{gr}_{S}(\tilde{f_q})(\overline{F_{I}},\tau^{k}\overline{F_{I}})=0$ 
  for the case $n < 0$.
  For the case $n \ge \mu$, it is enough to show 
  $\HMF^{gr}_{S}(\tilde{f_q})(\overline{F_{I}},\tau^{\mu}\overline{F_{I}})=0$.
  Any morphism $\Phi \in \HMF^{gr}_{S}(\tilde{f_q})(\overline{F_{I}},\tau^{\mu}\overline{F_{I}})$
  is given by the form $(g,g)$, where $g$ is a homogeneous element with 
  degree $2\mu/h$ in $S$.
  By the assumption, we have $\gcd(f_0(x,1),f_1(x,1))=1$. Then, there exist 
  polynomials $a$ and $b$ in $\C[x]$ such that 
  $af_0(x,1)+bf_1(x,1)=1$ from Bézout's identity. 
  Dividing $a\cdot g(x,1)$ and $b \cdot g(x,1)$ by $f_1(x,1)$ and $f_0(x,1)$ respectively, 
  we have $a \cdot g(x,1)=f_1(x,1)p+r$ and $b \cdot g(x,1)=f_0(x,1)q+s$, 
  where $p,q$ are the quotients and $r,s$ are the remainders. 
  We have 
  \[ g(x,1) = a \cdot g(x,1) f_0(x,1) + b \cdot g(x,1) f_1(x,1),\]
  \[(p+q)f_0(x,1)f_1(x,1)=g(x,1)-rf_0(x,1)-sf_1(x,1).\]
  Since the highest degree of the polynomial of the RHS is equal to $\mu$,
  we have $p+q=0$. Then, we have $g(x,1)=rf_0(x,1)+sf_1(x,1)$ which implies that 
  any morphism $\Phi \in \HMF^{gr}_{S}(\tilde{f_q})(\overline{F_{I}},\tau^{\mu}\overline{F_{I}})$
  is homotopic to the zero morphism.
\end{proof}

\begin{lem}\label{IsubsetneqJ}
  We suppose that $I \subsetneq J$. We represent the spectrum 
  $\mathfrak{sp}(\overline{F_I},\overline{F_J})$ as
  $\{ p_0 \le \dots \le p_k\}$ for some $k \in \Z_{\ge 0}$. Then, one has
  \[ |J|-|I| = p_0 \le \dots \le p_k = |J|-|I| + 2(\mu-1).\]
  In particular, we have the morphism
  \[ \Phi_{I}^{J}:=\Big(1,\prod_{i\in I^{c} \cap J}(x+s_i q)\Big)
  \in \HMF^{gr}_{S}(\tilde{f_q})(\overline{F_{I}},\overline{F_{J}}).\]
\end{lem}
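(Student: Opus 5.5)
We reduce everything to an explicit description of the morphism spaces $\HMF^{gr}_{S}(\tilde{f_q})(\overline{F_I},\tau^{n}\overline{F_J})$. Write $f_0^I=\prod_{i\in I}(x+s_iq)$ and $f_1^I=\prod_{j\notin I}(x+s_jq)$, and similarly for $J$. A morphism $\overline{F_I}\to\tau^n\overline{F_J}$ is a pair $(\phi_0,\phi_1)$ of homogeneous polynomials satisfying
\[ \phi_1 f_0^I = f_0^J\phi_0, \qquad \phi_0 f_1^I = f_1^J \phi_1 . \]
Since $I\subsetneq J$, we have $f_0^J=f_0^I\cdot g$ with $g:=\prod_{i\in I^c\cap J}(x+s_iq)$, and $f_1^I=g\cdot f_1^J$. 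Both equations then reduce to the single condition $\phi_1=g\phi_0$, with $\phi_0$ arbitrary. Thus the morphisms are exactly the pairs $(\phi_0,g\phi_0)$ with $\phi_0\in S$ homogeneous of the appropriate degree. In particular $\Phi_I^J=(1,g)$ is a morphism; it has degree zero with the shift chosen so that the source $S$ maps to $S$.

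Next we compute degrees and phases. We have $\phi(\overline{F_I})=|I|$ (from $t=0,\bar t=|I|$ up to normalization), so $\phi(\overline{F_I},\tau^n\overline{F_J})=|J|-|I|+2n$. It therefore suffices to show that the morphism space is nonzero for $n=0$ and for $n=\mu-1$, and zero for $n<0$ and $n\ge\mu$.

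\begin{itemize}
\item For $n<0$ the space vanishes because $\phi_0$ would have negative degree.
\item For $n=0$, the morphism $\Phi_I^J$ is not null-homotopic. A homotopy would give $1=h_1f_0^I+f_1^J h_0$ with homogeneous $h_0,h_1$ of degrees dictated by the grading. Both $f_0^I$ and $f_1^J$ lie in $\mathfrak m$ when $I\neq\emptyset$ and $J\neq\{1,\dots,\mu+1\}$, which is a contradiction. (If $I=\emptyset$ or $J$ is everything, the object is zero; those cases are excluded implicitly.)
\end{itemize}

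For the upper end we argue as in Lemma~\ref{I=J}. Modulo homotopy, $\phi_0$ is determined modulo the ideal $(f_0^I,f_1^J)$. Indeed, a homotopy changes $\phi_0$ by $h_1f_0^I+f_1^Jh_0$, and $\phi_1$ changes compatibly: $g(h_1f_0^I+f_1^Jh_0)=h_1f_0^J+f_1^Ih_0$, which is the correct form. So the morphism spaces are the graded pieces of $S/(f_0^I,f_1^J)$. Because $I\cap J^c=\emptyset$, the polynomials $f_0^I$ and $f_1^J$ share no linear factor (the $s_i$ are distinct), so they form a regular sequence in $\C[x,q]$. Hence $S/(f_0^I,f_1^J)$ is a complete intersection of degrees $|I|$ and $\mu+1-|J|$ (in $\tau$-units), whose Hilbert series is
\[ \frac{(1-t^{|I|})(1-t^{\mu+1-|J|})}{(1-t)^2}. \]
Its socle sits in degree $|I|+(\mu+1-|J|)-2=\mu-1-(|J|-|I|)$.

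This does not literally match the claimed top value $|J|-|I|+2(\mu-1)$, so the main obstacle is to get the degree bookkeeping right: the normalization of $\deg\phi_0$ versus $n$, the convention that $\phi(\overline F)$ sums $t_i+\bar t_i$, and the phase convention in which $\tau$ adds $2$. I would first recheck, using the $I=J$ case (where Lemma~\ref{I=J} gives top $2(\mu-1)$, i.e.\ $n=\mu-1$), how the degree of $\phi_0$ relates to $n$. In that case the relevant quotient is $S/(f_0^I,f_1^I)$, whose socle degree is $\mu-1$, and this matches. For general $I\subsetneq J$ I would then determine the vanishing range directly: nonvanishing at $n=\mu-1$ via the socle element, and vanishing for $n\ge\mu$ via the Bézout/division argument of Lemma~\ref{I=J} with $(f_0^I,f_1^J)$ in place of $(f_0,f_1)$. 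If the socle degree turns out smaller than $\mu-1$, I would re-express the phase difference with the paper's conventions; the key claimed content is that the extremes are attained exactly by the unit element and by the socle element of this complete intersection.
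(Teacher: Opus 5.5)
Your reduction is correct, and it is exactly the ``direct calculation'' the paper leaves out. For $I\subseteq J$ the morphisms are the pairs $(\phi_0,g\phi_0)$, and a homotopy changes $\phi_0$ by an element of $(f_0^I,f_1^J)$. Hence $\HMF^{gr}_{S}(\tilde{f_q})(\overline{F_I},\tau^n\overline{F_J})\cong \bigl(S/(f_0^I,f_1^J)\bigr)_n$, with phase $|J|-|I|+2n$. This gives $p_0=|J|-|I|$ and shows that $\Phi_I^J$ is a nonzero morphism.

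The gap is in your last paragraph. The mismatch you found is not a bookkeeping error on your side. The step you propose to fix it, nonvanishing at $n=\mu-1$ ``via the socle element'', fails: the socle of the complete intersection $S/(f_0^I,f_1^J)$ sits in degree $\mu-1-(|J|-|I|)<\mu-1$, so the morphism space is already zero at $n=\mu-1$. No change of phase normalization can rescue the stated upper end either. The difference $p_k-p_0=2(n_{\max}-n_{\min})$ does not depend on the normalization, and you have shown $n_{\max}-n_{\min}=\mu-1-(|J|-|I|)$, not $\mu-1$.

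Here is a concrete counterexample to the stated upper endpoint. Take $\mu=2$, $I=\{1\}$, $J=\{1,2\}$. A morphism $\overline{F_I}\to\tau\overline{F_J}$ is $(\ell,(x+s_2q)\ell)$ with $\ell$ linear. It is null-homotopic, since the homotopy coefficients are constants and $x+s_1q$, $x+s_3q$ span the linear forms. Hence $\mathfrak{sp}(\overline{F_I},\overline{F_J})=\{1\}$, whereas the statement demands $p_k=3$.

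What your argument actually proves is $p_k=2(\mu-1)-(|J|-|I|)$. The same correction applies to Lemma~\ref{IsupsetneqJ}, where the top value should be $2(\mu-1)-(|I|-|J|)$. Serre duality with $\mathcal S=\tau^{\mu-1}$ forces exactly this. It requires $\mathfrak{sp}(\overline{F_J},\overline{F_I})=2(\mu-1)-\mathfrak{sp}(\overline{F_I},\overline{F_J})$, and the two intervals as stated in the paper would then force $|J|-|I|=0$.

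So commit to your computation. The lower endpoint and the morphism $\Phi_I^J$ are established, but the upper endpoint in the statement has to be corrected, not proved. The corrected, smaller bound is in fact what the vanishing claims in the proof of Theorem~\ref{mainthm} rely on, for instance case (1) with $l=1$ and $m-n\ge 2$.
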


\begin{proof}
  It is proved by the direct caluculations of 
  $\HMF^{gr}_{S}(\tilde{f_q})(\overline{F_{I}},\tau^{n}\overline{F_{J}})$
  based on the explicit form of $\overline{F_{I}}$ and $\overline{F_{J}}$. 
\end{proof}

\begin{lem}\label{IsupsetneqJ}
  We suppose that $I \supsetneq J$. We represent the spectrum 
  $\mathfrak{sp}(\overline{F_I},\overline{F_J})$ as 
  $\{ p_0 \le \dots \le p_k\}$ for some $k \in \Z_{\ge 0}$. Then, one has
  \[ |I|-|J| = p_0 \le \dots \le p_k = |I|-|J| + 2(\mu-1).\]
  In particular, we have the morphism 
  \[\overline{\Phi}_{I}^{J}:=\Big(\prod_{i\in I \cap J^{c}}(x+s_i q),1\Big)
  \in \HMF^{gr}_{S}(\tilde{f_q})(\overline{F_{I}},\tau^{|I|-|J|}\overline{F_{J}}).\]
\end{lem}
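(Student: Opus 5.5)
The plan is to reduce everything to a single homogeneous polynomial, as in the proofs of Lemma~\ref{I=J} and Lemma~\ref{IsubsetneqJ}, and then use the coprimality of two factors of $\tilde{f_q}$.

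\textbf{Reduction to one polynomial.} Write $P:=\prod_{i\in I\cap J^{c}}(x+s_iq)$. Since $J\subsetneq I$, the factorizations satisfy
\[
f_0^{I}=f_0^{J}P, \qquad f_1^{J}=f_1^{I}P .
\]
A morphism $\Phi=(g_0,g_1)\colon\overline{F_I}\to\tau^{n}\overline{F_J}$ consists of:
\begin{itemize}
\item a homogeneous $g_0\in S$ of degree $2n/h$;
\item a homogeneous $g_1\in S$ of degree $2n'/h$, where $n':=n+|J|-|I|$.
\end{itemize}
They must satisfy $g_1f_0^{I}=f_0^{J}g_0$ and $g_0f_1^{I}=f_1^{J}g_1$. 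Substituting the factorizations and cancelling (since $S$ is a domain), both conditions reduce to $g_0=Pg_1$. Hence $\Phi$ is determined by the single polynomial $g_1$, which is arbitrary of degree $2n'/h$.

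\textbf{Phase count.} From the explicit shape of $\overline{F_I}$ we have $\phi(\overline{F_I})=|I|$ and $\phi(\tau^{n}\overline{F_J})=|J|+2n$. Therefore
\[
\phi(\overline{F_I},\tau^{n}\overline{F_J})=2n'+|I|-|J| .
\]
So the claim is equivalent to two statements:
\begin{itemize}
\item the morphism space vanishes unless $0\le n'\le \mu-1$;
\item for $n'=0$ it is nonzero and contains $\overline{\Phi}_I^J=(P,1)$.
\end{itemize}
For $n'<0$ there is no nonzero $g_1$, since $S$ has no negative-degree part.

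\textbf{Case $n'=0$.} Here $g_1=1$ gives $\overline{\Phi}_I^J$, and I need to check it is not null-homotopic. A homotopy consists of two maps:
\begin{itemize}
\item $h_1\colon\tau^{|I|}S\to\tau^{n}S$, of degree $2(n'-|J|)/h<0$ because $J\neq\emptyset$ (this holds as $\overline{F_J}$ is nonzero);
\item $h_0$, of degree $2(n'+|I|-h)/h<0$ because $|I|<\mu+1=h$.
\end{itemize}
Both are therefore zero, so every nonzero morphism with $n'=0$, in particular $\overline{\Phi}_I^J$, is not null-homotopic. This gives $p_0=|I|-|J|$.

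\textbf{Case $n'\ge\mu$ (the main step).} I must show every $g_1$ of degree $n'$ is null-homotopic. The homotopy relation on the second component reads $g_1=f_0^{J}h_1+h_0f_1^{I}$. The first component is $Pg_1$, and it is then automatically matched by the same $(h_0,h_1)$ because of the factorizations above; this compatibility should be verified explicitly but is formal.

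Now $f_0^{J}=\prod_{J}(x+s_iq)$ and $f_1^{I}=\prod_{I^{c}}(x+s_iq)$. Since $J\cap I^{c}=\emptyset$ and the parameter is generic, these two polynomials are coprime binary forms of degrees $a=|J|$ and $b=\mu+1-|I|$. I would then invoke the standard fact, proved exactly by the Bézout-and-division argument of Lemma~\ref{I=J}, that coprime binary forms of degrees $a,b$ generate every form of degree $\ge a+b-1$. Here
\[
a+b-1=\mu+|J|-|I|<\mu\le n',
\]
so $g_1$ lies in the ideal and $\Phi\sim0$. This vanishing, together with the careful bookkeeping of the homotopy degrees and the compatibility of the first component, is the step I expect to require the most care.

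Finally, taking the top admissible value $n'=\mu-1$ gives $p_k=|I|-|J|+2(\mu-1)$.
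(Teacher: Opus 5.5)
\textbf{Different route, correct lower endpoint.} The paper computes nothing here. It uses $T\overline{F_I}\simeq\tau^{|I|}\overline{F_{I^c}}$ to identify $\HMF^{gr}_{S}(\tilde{f_q})(\overline{F_I},\tau^{n}\overline{F_J})$ with $\HMF^{gr}_{S}(\tilde{f_q})(\overline{F_{I^c}},\tau^{n+|J|-|I|}\overline{F_{J^c}})$, and then quotes Lemma~\ref{IsubsetneqJ} for $I^c\subsetneq J^c$. Your direct computation is sound as far as it goes. The morphism space is $\bigl(S/(f_0^{J},f_1^{I})\bigr)_{n'}$, you handle $n'<0$ and $n'=0$ correctly, and $\overline{\Phi}_I^J$ is a nonzero class. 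So $p_0=|I|-|J|$ is established.

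\textbf{The gap: the upper endpoint.} Your final sentence does not follow. To get $p_k=|I|-|J|+2(\mu-1)$ you must show the morphism space is \emph{nonzero} at $n'=\mu-1$. Your reformulation into ``two statements'' silently drops this requirement, and your own main step refutes it. Since $f_0^J$ and $f_1^I$ are coprime forms of degrees $a=|J|$ and $b=\mu+1-|I|$, every form of degree $\ge a+b-1=\mu-(|I|-|J|)$ lies in the ideal. Because $|I|>|J|$, this range already contains $n'=\mu-1$.

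Carrying your computation through: $S/(f_0^J,f_1^I)$ is a complete intersection with Hilbert series $(1+t+\cdots+t^{a-1})(1+t+\cdots+t^{b-1})$. It is nonzero exactly for $0\le n'\le \mu-1-(|I|-|J|)$, so the top of the spectrum is $2(\mu-1)-(|I|-|J|)$, not $|I|-|J|+2(\mu-1)$.

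A concrete check: take $\mu=2$, $I=\{1,2\}$, $J=\{1\}$. The space is $\bigl(S/(x+s_1q,\,x+s_3q)\bigr)_{n'}$, which is $\C$ for $n'=0$ and zero otherwise. Hence $\mathfrak{sp}(\overline{F_I},\overline{F_J})=\{1\}$, whereas the statement predicts $p_k=3$.

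The Serre functor $\tau^{\mu-1}$ of Corollary~\ref{Serre} gives the same answer. It forces $\mathfrak{sp}(\overline{F_I},\overline{F_J})=2(\mu-1)-\mathfrak{sp}(\overline{F_J},\overline{F_I})$. The ranges asserted in Lemmas~\ref{IsubsetneqJ} and~\ref{IsupsetneqJ} violate this symmetry, since it would produce negative phases. The symmetric range from $|I|-|J|$ to $2(\mu-1)-(|I|-|J|)$ satisfies it.

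\textbf{Consequence.} No argument can close your last step as stated. The paper's proof inherits the same error from Lemma~\ref{IsubsetneqJ}, whose direct calculation actually yields the range from $|J|-|I|$ to $2(\mu-1)-(|J|-|I|)$. You should state and prove the corrected upper bound. It is in fact what the vanishing claims in cases (1) and (5) of the exceptionality proof require: with the bound as stated, the phases $2\mu+2-(|I|-|J|)$ and $2\mu-(|I|-|J|)$ used there would not be excluded.
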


\begin{proof}
  For two subsets $I,J$ satisfying $I \supsetneq J$,
  by the translation functor $T$ on $\HMF^{gr}_{S}(\tilde{f_q})$,
  one has $T\overline{F_{I}} \simeq \tau^{|I|}\overline{F_{I^c}}$.
  Then we have the isomorphism
  \begin{equation}\label{subsetsub}
    \begin{split}
      \HMF^{gr}_{S}(\tilde{f_q})(\overline{F_{I}},\tau^{n}\overline{F_{J}}) 
      &\simeq 
      \HMF^{gr}_{S}(\tilde{f_q})(T\overline{F_{I}},T\tau^{n}\overline{F_{J}})\\
      &\simeq 
      \HMF^{gr}_{S}(\tilde{f_q})(\tau^{|I|}\overline{F_{I^c}},\tau^{n+|J|}\overline{F_{J^c}})\\
      &\simeq 
      \HMF^{gr}_{S}(\tilde{f_q})(\overline{F_{I^c}},\tau^{n+|J|-|I|}\overline{F_{J^c}}).
    \end{split}
  \end{equation}
  The morphism $\overline{\Phi}_{I}^{J}$ is obtained by the isomorphism (\ref{subsetsub})
  for $n=|I|-|J|$, which is corresponding to $\Phi_{I^c}^{J^c}$ in Lemma~\ref{IsubsetneqJ}.
  Therefore the statement follows from Lemma~\ref{IsubsetneqJ} and the isomorphism (\ref{subsetsub}).
\end{proof}

\begin{lem}\label{IJempty}
  We suppose that $I \cap J = \emptyset$. Then, we have
  $\mathfrak{sp}(\overline{F_I},\overline{F_J})=\emptyset$.
\end{lem}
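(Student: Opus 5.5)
We need to show $\HMF^{gr}_{S}(\tilde{f_q})(\overline{F_I},\tau^{n}\overline{F_J})=0$ for every $n\in\Z$ whenever $I\cap J=\emptyset$, with $I,J$ nonempty proper subsets. The plan is a direct computation on morphisms of rank-$1$ factorizations, followed by a divisibility argument.

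\emph{Form of a morphism.} A morphism $\Phi\colon\overline{F_I}\to\tau^{n}\overline{F_J}$ is a pair $(\phi_0,\phi_1)$ of homogeneous elements of $S=\C[x,q]$ satisfying
\[
\phi_1 f_0^{I}=f_0^{J}\phi_0,\qquad \phi_0 f_1^{I}=f_1^{J}\phi_1 .
\]
Write $f_0^{I}=\prod_{i\in I}(x+s_iq)$ and $f_0^{J}=\prod_{j\in J}(x+s_jq)$. Since $I\cap J=\emptyset$ and the $s_i$ are pairwise distinct (the parameter is generic), the linear forms $x+s_iq$ are pairwise non-associate primes of the UFD $S$, so $\gcd(f_0^I,f_0^J)=1$. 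From the first equation, $f_0^{J}$ divides $\phi_1$. Write $\phi_1=f_0^{J}\psi$; cancelling $f_0^J$ then gives $\phi_0=f_0^{I}\psi$. The second equation is then automatic, since $f_0^If_1^I=f_0^Jf_1^J=\tilde{f_q}$.

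\emph{Null-homotopy.} Consequently every morphism has the form $\Phi=(f_0^I\psi,\,f_0^J\psi)$ with $\psi$ homogeneous of the appropriate degree, possibly negative, in which case $\psi=0$. Take the homotopy $h_1=\psi$ (from $F_1$ to $F_0'$, of degree zero after the shifts) and $h_0=0$. Then
\[
h_1f_0^I+f_1'h_0=\psi f_0^I=\phi_0,\qquad f_0'h_1+h_0f_1^I=f_0^J\psi=\phi_1 .
\]
So $\Phi\sim0$, every Hom space vanishes, and the multiset $\mathfrak{sp}(\overline{F_I},\overline{F_J})$ is empty.

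\emph{Step needing care.} The only real check is that the degree of $h_1$ is compatible with the gradings $\tau^{|I|}S$ and $\tau^{n}S$. This follows automatically because $\psi$ is determined by $\phi_1$, whose degree is fixed by the morphism condition, so it amounts to bookkeeping with the indices $t_i,\bar t_i$. Genericity is used only to ensure the coprimality $\gcd(f_0^I,f_0^J)=1$.
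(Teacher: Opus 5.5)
Your proposal is correct and takes the same route as the paper, which simply states that the lemma follows from a direct computation of $\HMF^{gr}_{S}(\tilde{f_q})(\overline{F_{I}},\tau^{n}\overline{F_{J}})$ from the explicit rank-one factorizations. Your argument supplies that computation: coprimality of $f_0^I$ and $f_0^J$ forces every morphism to be $(f_0^I\psi,\,f_0^J\psi)$ with $\psi$ of weight $n-|I|$, and the homotopy $h_1=\psi$, $h_0=0$ matches the paper's convention ($h_1\colon F_1\to F'_0$ of degree zero).
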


\begin{proof}
  It is proved by the direct caluculations of 
  $\HMF^{gr}_{S}(\tilde{f_q})(\overline{F_{I}},\tau^{n}\overline{F_{J}})$
  based on the explicit form of $\overline{F_{I}}$ and $\overline{F_{J}}$.
\end{proof}

\subsection{Full exceptional collections of the category  $\HMF^{gr}_{S}(\tilde{f_q})$}

In this subsection, we give the full strongly exceptional collection 
consisting of some graded matrix factorizations of rank $1$
in $\HMF^{gr}_{S}(\tilde{f_q})$.

\begin{lem}\label{shifthom}
  For any $I \subsetneq \{1,\dots,\mu+1\}$, we have
  $\mathfrak{sp}(\overline{F_I},T\overline{F_I})=\emptyset$.
\end{lem}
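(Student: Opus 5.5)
We need to show $\HMF^{gr}_{S}(\tilde{f_q})(\overline{F_I},\tau^{n}T\overline{F_I})=0$ for every $n\in\Z$. The plan is to reduce this to a disjoint-support statement and invoke Lemma~\ref{IJempty}. As in the proof of Lemma~\ref{IsupsetneqJ}, we use $T\overline{F_I}\simeq\tau^{|I|}\overline{F_{I^c}}$. To see this explicitly, write $\overline{F_I}=(S\overset{f_0}{\to}\tau^{|I|}S\overset{f_1}{\to})$ with $f_0=\prod_{i\in I}(x+s_iq)$ and $f_1=\prod_{j\notin I}(x+s_jq)$. Then
\[T\overline{F_I}=\Bigl(\mf{\tau^{|I|}S}{-f_1}{-f_0}{\tau^{h}S}\Bigr),\]
with $h=\mu+1$. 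The sign can be absorbed by the isomorphism $(1,-1)$. Comparing with $\tau^{|I|}\overline{F_{I^c}}=(\tau^{|I|}S\to\tau^{|I|+|I^c|}S)$ and using $|I|+|I^c|=\mu+1=h$, the two agree on the nose.

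It follows that
\[\HMF^{gr}_{S}(\tilde{f_q})(\overline{F_I},\tau^{n}T\overline{F_I})\simeq \HMF^{gr}_{S}(\tilde{f_q})(\overline{F_I},\tau^{n+|I|}\overline{F_{I^c}})\]
for all $n$. Hence $\mathfrak{sp}(\overline{F_I},T\overline{F_I})$ is empty exactly when $\mathfrak{sp}(\overline{F_I},\overline{F_{I^c}})$ is empty. Since $I\cap I^c=\emptyset$, Lemma~\ref{IJempty} gives the claim.

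The edge case $I=\emptyset$ is harmless, since then $\overline{F_I}$ is a zero object.

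Because Lemma~\ref{IJempty} is stated with only a sketch, I would also verify the disjoint case directly, which is the only real obstacle. Take a morphism $(a,b)\colon\overline{F_I}\to\tau^{m}\overline{F_J}$ with $I\cap J=\emptyset$, where $a,b$ are homogeneous. Write $f_0^I=\prod_{i\in I}(x+s_iq)$, $f_1^I=\prod_{i\notin I}(x+s_iq)$, and similarly for $J$. The commutation relations are $b f_0^I=f_0^J a$ and $a f_1^I=f_1^J b$.

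Since $s_i$ are distinct and $I\cap J=\emptyset$, the polynomial $f_0^J$ is coprime to $f_0^I$. So from $b f_0^I=f_0^J a$ we get $f_0^J\mid b$, say $b=f_0^J c$, and then $a=f_0^I c$. Setting $h_1=c$ (of the correct degree) and $h_0=0$, we get $(a,b)=(h_1 f_0^I,\, f_0^J h_1)$, which is null-homotopic.

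The key point is that a consistent choice of $c$ exists for every $n$. This holds because $S=\C[x,q]$ is a UFD and the linear forms $x+s_iq$ are pairwise non-associate exactly by genericity. The degree bookkeeping for $h_1$ then matches automatically.
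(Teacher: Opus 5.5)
Your proof is correct and follows the paper's own argument: the paper likewise uses $T\overline{F_I}\simeq\tau^{|I|}\overline{F_{I^c}}$ (with $|I|+|I^c|=h=\mu+1$) and then applies Lemma~\ref{IJempty} to the disjoint pair $I, I^c$. Your additional coprimality check is also valid: $f_0^J\mid b$, so $(a,b)=(cf_0^I,\,f_0^Jc)$ is null-homotopic via $h_1=c$, $h_0=0$, and this fills in the ``direct calculation'' that the paper leaves implicit for Lemma~\ref{IJempty}.
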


\begin{proof}
  This statement immediately follows from 
  $T\overline{F_{I}} \simeq \tau^{|I|}\overline{F_{I^c}}$ and Lemma \ref{IJempty}.
\end{proof}

By Proposition \ref{T2}, Lemma \ref{I=J} and Lemma \ref{shifthom}, 
we obtain the following corollary.

\begin{cor}
  For any subset $I \subsetneq \{1,\dots,\mu+1\}$, the graded matrix factorization
  $\overline{F_{I}}$ is an exceptional object in $\HMF^{gr}_{S}(\tilde{f_q})$.
\end{cor}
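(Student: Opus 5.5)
Exceptionality of $\overline{F_I}$ requires two things: $\HMF^{gr}_{S}(\tilde{f_q})(\overline{F_I},\overline{F_I})=\C\cdot\mathrm{id}$ in the category (i.e.\ only the degree-zero piece $n=0$), and $\HMF^{gr}_{S}(\tilde{f_q})(\overline{F_I},T^{p}\overline{F_I})=0$ for all $p\neq 0$. The plan is to separate even and odd $p$ using Proposition~\ref{T2}. For $p=2k$ we have $T^{2k}\overline{F_I}=\tau^{kh}\overline{F_I}$ with $h=\mu+1$, while for $p=2k+1$ we have $T^{2k+1}\overline{F_I}=\tau^{kh}T\overline{F_I}$. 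So everything reduces to knowing the spaces $\HMF^{gr}_{S}(\tilde{f_q})(\overline{F_I},\tau^{n}\overline{F_I})$ and $\HMF^{gr}_{S}(\tilde{f_q})(\overline{F_I},\tau^{n}T\overline{F_I})$ for all $n\in\Z$. These are recorded by the spectra $\mathfrak{sp}(\overline{F_I})$ and $\mathfrak{sp}(\overline{F_I},T\overline{F_I})$.

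The odd case is immediate. Lemma~\ref{shifthom} says $\mathfrak{sp}(\overline{F_I},T\overline{F_I})=\emptyset$, so $\HMF^{gr}_{S}(\tilde{f_q})(\overline{F_I},\tau^{n}T\overline{F_I})=0$ for every $n$, in particular for $n=kh$. This kills all odd $p$.

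For even $p=2k\neq 0$ I will use the proof of Lemma~\ref{I=J}. It shows that $\HMF^{gr}_{S}(\tilde{f_q})(\overline{F_I},\tau^{n}\overline{F_I})=0$ unless $0\le n\le \mu-1$. Since $h=\mu+1>\mu-1$, every $n=kh$ with $k\neq 0$ lies outside this range. Hence $\Hom(\overline{F_I},T^{2k}\overline{F_I})=0$ for $k\neq 0$.

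It remains to identify the endomorphism space at $n=0$. A morphism $\overline{F_I}\to\overline{F_I}$ is a pair $(\phi_0,\phi_1)$ of degree-zero elements of $S$, i.e.\ constants. The relation $\phi_1 f_0=f_0\phi_0$ forces $\phi_0=\phi_1=c\in\C$, so the morphism is $c\cdot\mathrm{id}$. Homotopies $h_0$ of degree $-2$ vanish. The homotopy $h_1$ has degree $-2|I|/h<0$ relative to the grading shift, so it vanishes too; alternatively, $\mathrm{id}\neq 0$ because $\overline{F_I}$ is not a zero object when $I\neq\emptyset$ and $I\neq\{1,\dots,\mu+1\}$. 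Thus $\Hom(\overline{F_I},\overline{F_I})=\C\cdot\mathrm{id}$.

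The only point needing care is the bookkeeping that $\tau^{kh}$ with $k\neq0$ falls outside the window $[0,\mu-1]$ of nonvanishing shifts. This is exactly where the bound $p_k=2(\mu-1)<2h$ from Lemma~\ref{I=J} is used: the phases $2n$ are confined to $[0,2(\mu-1)]$, and this window lies strictly inside $(-2h,2h)$.
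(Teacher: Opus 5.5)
Your proof is correct and takes the same route as the paper, which derives the corollary directly from Proposition~\ref{T2} (using $T^{2}=\tau^{h}$ to split $p$ into even and odd cases), Lemma~\ref{I=J} for even $p$, and Lemma~\ref{shifthom} for odd $p$. Your explicit check that the degree-zero endomorphisms are exactly $\C\cdot\mathrm{id}$ is a useful addition, since Lemma~\ref{I=J} by itself only says that the spectrum starts at $0$, not that this piece is one-dimensional: the morphisms are constants, and for $I\neq\emptyset$ both homotopies vanish for degree reasons.
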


The following is the main result of the present paper.

\begin{thm}\label{mainthm}
  Let $a=(a_1,\dots,a_{\mu}) \in \{ 1,\dots,\mu+1\}^{\mu}$ be integers satisfying $a_i \neq a_j$ for $i \neq j$.
  We set a subset 
  $$I_k:=\{ a_1,\dots,a_k\} \subset \{1,\dots,\mu+1\}$$ 
  for $k=1,\dots,\mu$
  and define an ordered collection 
  $\mathcal{E}_{a}=(E_1,\dots,E_{\mu},E_{\mu+1},\dots,E_{2\mu})$
  as follows:
  \begin{equation*}
    E_k:=
    \begin{cases*}
      \overline{F_{I_k}} \text{\ if\ }1 \le k \le \mu,\\
      \tau\overline{F_{I_{k-\mu}}}\ \text{if}\ \mu+1 \le k \le 2\mu.
    \end{cases*}
  \end{equation*}
  Then, an ordered collection $\mathcal{E}_{a}$ forms a full strongly exceptional collection in 
  $\HMF^{gr}_{S}(\tilde{f_q})$.
\end{thm}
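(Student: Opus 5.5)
The plan is to split the proof into two parts. First I check that $\mathcal{E}_a$ is strongly exceptional, using the spectrum computations of the previous subsection. Then I check fullness by applying Theorem~\ref{categorygenerating}. Throughout I use Knörrer periodicity (Proposition~\ref{Knorrer}) to work over $S=\C[x,q]$. Here $h=\mu+1$, $\deg x=\deg q=2/h$ and $\phi(\overline{F_I})=|I|$. I also use Proposition~\ref{T2}, $T^2=\tau^{\mu+1}$, and the isomorphism $T\overline{F_I}\simeq\tau^{|I|}\overline{F_{I^c}}$. These reduce every space $\Hom(E_m,T^pE_n)$ to one of the two forms
\[
\HMF^{gr}_{S}(\tilde{f_q})(\overline{F_{I_i}},\tau^{N}\overline{F_{I_j}}) \quad (p \text{ even}),\qquad
\HMF^{gr}_{S}(\tilde{f_q})(\overline{F_{I_i}},\tau^{N}\overline{F_{I_j^c}}) \quad (p \text{ odd}),
\]
where $N$ depends explicitly on $p$, on $|I_j|$, and on whether $E_m,E_n$ lie in the first or the second block; the second block contributes an extra $\pm1$ to $N$.

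\emph{Strong exceptionality.} The subsets $I_1\subsetneq\cdots\subsetneq I_\mu$ are nested. So in the even case Lemma~\ref{I=J}, Lemma~\ref{IsubsetneqJ} or Lemma~\ref{IsupsetneqJ} applies. In the odd case I compare $I_i$ with $I_j^c$: either $I_i\cap I_j^c=\emptyset$, and Lemma~\ref{IJempty} gives vanishing, or the two sets are not nested. For the non-nested case I will redo the Bézout-type argument from the proof of Lemma~\ref{I=J}. A morphism $(g_0,g_1)$ between rank one factorizations is forced by the factor $\gcd$ structure of $f_0,f_1$, and generic distinct $s_i$ let one split any polynomial of degree at least $\mu$ into a homotopy. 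The spectrum lemmas convert each vanishing claim into the condition that $N$ lies outside an explicit window. For example, for $I\subsetneq J$ the window is $0\le N\le\mu-1$, and for $I\supsetneq J$ it is $|I|-|J|\le N\le |I|-|J|+\mu-1$.

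The main obstacle is this step. The lemmas only give the extreme values $p_0,p_k$ of the spectrum, whereas some shifts $N=\mu+1\pm1$ with $p=\pm2$ fall inside a window when $|I|-|J|$ is large. In those cases I will compute $\HMF^{gr}_{S}(\tilde{f_q})(\overline{F_I},\tau^N\overline{F_J})$ directly: it is the degree-$N$ part of $S/(f_0^{J},f_1^{I})$-type quotients. The aim is to show that exactly the degrees $0,\dots,\mu-1$ above the minimal one occur, with the relevant top degree matching the Serre functor $\tau^{\mu-1}$; here $d=1$ and $\varepsilon(R)=1-\mu$. I would then track carefully which pairs $(m,n)$ with $m>n$, or with $p\neq0$, can land in the window. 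If some bad case survives, the fix would be to reorder the collection by phase rather than by block.

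\emph{Fullness.} By Bondal's result the subcategory $\mathcal{T}'$ generated by $\mathcal{E}_a$ is right admissible, so it suffices to verify conditions (i) and (ii) of Theorem~\ref{categorygenerating}.

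For (ii), take $g_1=x+s_{a_1}q$ and $g_2=q$, and write $\tilde{f_q}=g_1\cdot\prod_{j\ne a_1}(x+s_jq)+q\cdot 0$. The explicit $s=2$ stabilization in the Example of Section~\ref{prel} then shows that $(R/\mathfrak m)^{stab}$ is the mapping cone of $(q,q)\colon \overline{F_{\{a_1\}}}\to\tau\overline{F_{\{a_1\}}}$, that is, of a morphism $E_1\to E_{\mu+1}$. Hence $(R/\mathfrak m)^{stab}\in\mathcal{T}'$.

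For (i), I first show that cones of $\Phi_{I}^{J}$ for $I\subsetneq J$ are, after cancelling the unit entry, rank one factorizations $\tau^{c}\overline{F_{J\setminus I}}$ up to $T$. Consequently $\mathcal{T}'$ contains all $\tau^{\epsilon}\overline{F_{I_j\setminus I_i}}$ with $\epsilon\in\{0,1\}$. Then I use $T\overline{F_I}\simeq\tau^{|I|}\overline{F_{I^c}}$ and $T^2=\tau^{\mu+1}$ to obtain $\tau^{2}\overline{F_{I_k}}$, and inductively $\tau^{-1}\overline{F_{I_k}}$, from triangles such as $\overline{F_{I_k}}\to\tau\overline{F_{I_k}}\to(\text{cone of }q)$. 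Combined with $\tau^{\mu+1}$-closure, this shows $\tau\mathcal{T}'=\mathcal{T}'$.
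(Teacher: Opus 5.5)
Your overall architecture matches the paper's: right admissibility via Bondal, then Theorem~\ref{categorygenerating}, with condition (ii) coming from the cone of $(q,q)\colon E_1\to E_{\mu+1}$. Note that the paper finds $(R/\mathfrak{m})^{stab}\simeq T^{-1}\tau^{h-2}C((q,q))$, so this step also relies on (i). Both substantive steps, however, contain genuine gaps.

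\emph{Strong exceptionality.} Your suspicion about the windows is justified. The upper ends stated in Lemmas~\ref{IsubsetneqJ} and~\ref{IsupsetneqJ} are not sharp: for $\mu=2$ and $I=\{1\}\subsetneq J=\{1,2\}$ the Hom is concentrated in $N=0$. With those windows, phase considerations alone do not exclude $p=2$ once $|I|-|J|\ge 2$. But you leave this unresolved, for three reasons:
\begin{itemize}
\item the decisive computation is postponed;
\item your announced target (``exactly the degrees $0,\dots,\mu-1$ above the minimal one occur'') is false, and it would leave precisely those bad cases alive;
\item the fallback of reordering the collection would change the statement.
\end{itemize}
The missing computation is short. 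Put $P_K=\prod_{k\in K}(x+s_kq)$, with $P_\emptyset=1$. Since the $s_k$ are distinct, every morphism $\overline{F_I}\to\tau^N\overline{F_J}$ has the form $(P_{I\setminus J}u,\,P_{J\setminus I}u)$. It is null-homotopic iff $u\in(P_{I\cap J},P_{(I\cup J)^c})$. Hence $\HMF^{gr}_{S}(\tilde{f_q})(\overline{F_I},\tau^N\overline{F_J})$ is the degree $N-|I\setminus J|$ part (counting $\deg x=\deg q=1$) of the complete intersection $S/(P_{I\cap J},P_{(I\cup J)^c})$. It is therefore nonzero iff $I\cap J\neq\emptyset$, $I\cup J\neq\{1,\dots,\mu+1\}$ and $|I\setminus J|\le N\le\mu-1-|J\setminus I|$.

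Now write $E_m=\tau^{\epsilon}\overline{F_{I_i}}$ and $E_n=\tau^{\delta}\overline{F_{I_j}}$. For $p=2l$ you need $N=l(\mu+1)+\delta-\epsilon$ to lie in a window contained in $[0,\mu-1]$, which forces $l=0$. For $l=0$ and $m>n$ one has $N=0<i-j$ or $N=-1$. For $p$ odd the pair of subsets becomes $(I_i,I_j^c)$. Either $I_i\cap I_j^c=\emptyset$ (when $i\le j$) or $I_i\cup I_j^c=\{1,\dots,\mu+1\}$ (when $i>j$), so the Hom vanishes in \emph{every} degree. Your B\'ezout plan only absorbs polynomials of degree $\ge\mu$ into homotopies, so it cannot give this all-degree vanishing in the non-nested case $i>j$.

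\emph{Fullness, condition (i).} Your target is the right one. Since $\tau E_k=E_{\mu+k}$ and $\tau^{-1}=\tau^{\mu}T^{-2}$, it suffices to show $\tau^2\overline{F_{I_k}}\in\mathcal{T}'$. But nothing in the sketch produces these objects.
\begin{itemize}
\item The cone of $\Phi_{I_i}^{I_j}$ is $\tau^{i}\overline{F_{I_j\setminus I_i}}$. So you obtain the shifts $\tau^{i}$ and $\tau^{i+1}$ of these segments, not $\tau^{0}$ and $\tau^{1}$.
\item The triangle $\overline{F_{I_k}}\xrightarrow{(q,q)}\tau\overline{F_{I_k}}\to C$ only places the rank-two object $C$ in $\mathcal{T}'$. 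Getting from it to $\tau^2\overline{F_{I_k}}$ would need $\tau C\in\mathcal{T}'$, which is circular.
\end{itemize}
Shifting the triangle $\overline{F_{I_{k+1}}}\to\tau\overline{F_{I_k}}\to\tau\overline{F_{\{a_{k+1}\}^c}}$ by $\tau$ and using $T\overline{F_{\{a\}}}\simeq\tau\overline{F_{\{a\}^c}}$, one sees that what is really needed is $\tau\overline{F_{\{a\}}}\in\mathcal{T}'$ for every $a$.

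The paper supplies the missing generation argument in the proof of Lemma~\ref{closed}:
\begin{itemize}
\item cones of $\overline{\Phi}_{I_k}^{I_{k-1}}$, together with $TE_1$ and $E_{2\mu}$, give every $\tau\overline{F_{\{a\}^c}}$, hence every $\overline{F_{\{a\}}}$;
\item further cones of maps $\Phi_I^J$ and $\overline{\Phi}_I^J$ between arbitrary (not only initial-segment) subsets, together with $T\overline{F_I}\simeq\tau^{|I|}\overline{F_{I^c}}$, yield all $\tau^{l}\overline{F_I}$ with $0\le l\le\mu$;
\item $T^2=\tau^{\mu+1}$ then gives closure under $\tau$.
\end{itemize}
Without some such bootstrapping, condition (i) is not established.
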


We prove Theorem~\ref{mainthm} in Section~\ref{prf}. 
As a consequence of Theorem~\ref{mainthm}, we obtain the following.

\begin{cor}\label{Serre}
  The functor $\tau^{\mu-1}$ is the Serre functor on 
  $\HMF^{gr}_{S}(\tilde{f_q})$. 
\end{cor}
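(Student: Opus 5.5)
The plan is to deduce the Serre functor from the full strongly exceptional collection of Theorem~\ref{mainthm}, following Bondal \cite{Bondalrep}. By Theorem~\ref{mainthm}, $\HMF^{gr}_{S}(\tilde{f_q})$ is generated by $\mathcal{E}_a$. By Bondal's theorem it is then equivalent to $D^b(\mathrm{mod}\text{-}A)$, where $A=\mathrm{End}(\oplus_k E_k)$ is a finite-dimensional algebra of finite global dimension. Hence a Serre functor $\mathcal{S}$ exists, and it is unique up to natural isomorphism. It therefore suffices to show that $\tau^{\mu-1}$, which is an exact autoequivalence commuting with $T$, satisfies the Serre duality pairing. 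That is, we want a functional $\mathrm{tr}_X:\HMF^{gr}_{S}(\tilde{f_q})(X,\tau^{\mu-1}X)\to\C$ for which the composition pairing
\[\HMF^{gr}_{S}(\tilde{f_q})(X,Y)\otimes \HMF^{gr}_{S}(\tilde{f_q})(Y,\tau^{\mu-1}X)\to \HMF^{gr}_{S}(\tilde{f_q})(X,\tau^{\mu-1}X)\xrightarrow{\mathrm{tr}_X}\C\]
is nondegenerate. Once this holds for $X,Y$ in the collection (and their translates $T^p$), the two functors $\HMF(X,-)$ and $\HMF(-,\tau^{\mu-1}X)^{*}$ are cohomological and agree on generators via a natural map. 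A five-lemma argument along the triangles building arbitrary objects from $\mathcal{E}_a$ then extends nondegeneracy to all $X,Y$.

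The first step is the case of rank $1$ objects, which covers all of $\mathcal{E}_a$ and their translates, since $T\overline{F_I}\simeq\tau^{|I|}\overline{F_{I^c}}$. Here I will compute the morphism spaces explicitly. For $I\subseteq J$, a morphism $\overline{F_I}\to\tau^n\overline{F_J}$ is determined by its first component $g$, the second being $g\prod_{i\in J\setminus I}(x+s_iq)$. Homotopies then show
\[\bigoplus_n\HMF^{gr}_{S}(\tilde{f_q})(\overline{F_I},\tau^n\overline{F_J})\simeq \C[x,q]\Big/\Big(\textstyle\prod_{i\in I}(x+s_iq),\ \prod_{j\notin J}(x+s_jq)\Big)\]
up to a grading shift. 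This is exactly the mechanism in the proof of Lemma~\ref{I=J}, where Bézout for coprime binary forms kills the high degrees. In the generic case the two generators are coprime binary forms of degrees $a=|I|$ and $b=\mu+1-|J|$. So the quotient is an Artinian complete intersection, hence Gorenstein, with one-dimensional socle in degree $a+b-2$. This degree matches the top of the spectra in Lemmas~\ref{I=J}, \ref{IsubsetneqJ} and \ref{IsupsetneqJ}, which run over a window of length $2(\mu-1)$.

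The remaining cases follow from the case just treated. The case $I\supsetneq J$ reduces to it via the isomorphism (\ref{subsetsub}), and for $I\cap J=\emptyset$ both sides vanish by Lemma~\ref{IJempty}. When neither $I\subseteq J$ nor $J\subseteq I$, I expect a similar quotient by the forms for $I\setminus J$ and $J^c\setminus I^c$, which I would compute in the same way.

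In particular $\HMF(\overline{F_I},\tau^{\mu-1}\overline{F_I})\simeq\C$ is spanned by the socle element of $\C[x,q]/(f_0,f_1)$, and I take $\mathrm{tr}$ to be projection onto it. Composition of morphisms between rank $1$ objects is multiplication of the first components $g$. So the pairing $\HMF(\overline{F_I},\tau^n\overline{F_J})\otimes\HMF(\overline{F_J},\tau^{\mu-1-n}\overline{F_I})\to\C$ becomes the multiplication pairing of a Gorenstein Artinian algebra into its socle, which is perfect. The degree bookkeeping (the phase shift $|J|-|I|$ appearing in the spectra) should match exactly $\tau^{\mu-1}$, consistent with $\varepsilon(R)=h-\sum w_i=\mu-1$ and $d=1$ in Theorem~\ref{cmserre}.

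The main obstacle will be making these identifications canonical and natural. One must check that the various quotient-ring descriptions for $\HMF(\overline{F_I},\tau^n\overline{F_J})$ and $\HMF(\overline{F_J},\tau^{m}\overline{F_I})$ are compatible under composition. In particular, the socle functionals $\mathrm{tr}_{\overline{F_I}}$ for different $I$ must be chosen coherently, so that $\mathrm{tr}(\beta\alpha)=\mathrm{tr}(\alpha\beta)$; naturality in both variables is what the five-lemma extension requires. I would first fix normalizations by comparing with the standard generators $\Phi_I^J$ and $\overline{\Phi}_I^J$. I would then verify the cyclicity on products of these, which suffices since they generate all morphisms among rank $1$ objects multiplicatively over $S$.
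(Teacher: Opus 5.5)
Your route differs from the paper's. The paper computes the iterated left mutation $L^{2\mu-1}(E_{2\mu})$ via Proposition~\ref{ser} and checks $T^{2\mu-1}L^{2\mu-1}(E_{2\mu})\simeq\tau^{\mu-1}E_{2\mu}$. That check only matches $\mathcal{S}$ and $\tau^{\mu-1}$ on one object, so the functorial statement there really rests on Theorem~\ref{cmserre}, with $d=1$ and $K_R\simeq\tau^{h-2}R=\tau^{\mu-1}R$. In the paper's sign convention this means $-\varepsilon(R)=\mu-1$, not $\varepsilon(R)=\mu-1$.

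Your rank-one mechanism, Gorenstein Artinian quotients of $\C[x,q]$, is the right one, but the extension step has a genuine gap. You only define $\mathrm{tr}_X$ when $X$ has rank one, as socle projection in $\C[x,q]/(f_0,f_1)$. With these traces the five lemma does work as long as one argument stays rank one. For fixed rank-one $X$, the map $\beta\mapsto\mathrm{tr}_X(\beta\circ -)$ is natural in $Y$. For fixed rank-one $Y$, the map $\beta\mapsto\mathrm{tr}_Y(\tau^{\mu-1}(-)\circ\beta)$ is natural in $X$. But nothing in the proposal produces a functional on $\HMF^{gr}_{S}(\tilde{f_q})(X,\tau^{\mu-1}X)$ for $X$ of higher rank. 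Cyclicity among rank-one objects only gives a natural isomorphism $\tau^{\mu-1}\simeq\mathcal{S}$ on the full subcategory of rank-one objects. Such an isomorphism does not propagate along triangles in a bare triangulated category: the isomorphism of cones supplied by the axioms is not unique, so naturality is lost after the first step. Hence ``extends nondegeneracy to all $X,Y$'' is not justified. To close the gap, use a trace defined on every object at once and cyclic by construction, such as the Kapustin--Li residue trace for matrix factorizations; then only nondegeneracy needs checking and your d\'evissage applies. Alternatively, argue with a dg enhancement, or simply invoke Theorem~\ref{cmserre}.

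The rank-one part also needs corrections. Write $\prod_X$ for $\prod_{i\in X}(x+s_iq)$.
\begin{itemize}
\item Your guess for the non-nested case cannot be right, since $J^c\setminus I^c=I\setminus J$. Solving $\phi_1\prod_I=\prod_J\phi_0$ gives morphisms $\bigl(u\prod_{I\setminus J},\,u\prod_{J\setminus I}\bigr)$. Dividing out homotopies then gives, in all cases and up to a shift,
\[\bigoplus_{n}\HMF^{gr}_{S}(\tilde{f_q})(\overline{F_I},\tau^{n}\overline{F_J})\simeq\C[x,q]\Big/\Big(\prod_{I\cap J},\ \prod_{(I\cup J)^c}\Big).\]
This contains your nested formula and Lemma~\ref{IJempty}, and it is symmetric in $I$ and $J$.
\item Composition is not literally multiplication of these parameters. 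The composite $\overline{F_I}\to\tau^{n}\overline{F_J}\to\tau^{\mu-1}\overline{F_I}$ has first component $uu'\prod_{I\triangle J}$. So you must check that multiplication by $\prod_{I\triangle J}$ embeds the ring above into $\C[x,q]/(\prod_I,\prod_{I^c})$, and hence carries socle to socle. It does, by coprimality of the linear factors. The degrees match because $|I\cap J|+|(I\cup J)^c|-2+|I\triangle J|=\mu-1$.
\item For $I\subsetneq J$ your own ring has socle in degree $\mu-1-(|J|-|I|)$. So $\mathfrak{sp}(\overline{F_I},\overline{F_J})$ ends at $2(\mu-1)-(|J|-|I|)$, not at $|J|-|I|+2(\mu-1)$ as Lemma~\ref{IsubsetneqJ} states. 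This shorter window is exactly what duality with $\tau^{\mu-1}$ requires, so your ``matches the top of the spectra'' remark should not lean on that lemma.
\end{itemize}
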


Remark that the exsistence of the Serre functor on $\HMF^{gr}_{S}(\tilde{f_q})$ is followed
by Theorem~\ref{cmserre} and Proposition~\ref{A}.
More precisely, it is a special case of $d=1$ and $-\varepsilon(R)=\mu-1$ in Theorem~\ref{cmserre}.
In this paper, we give the proof of Corollary~\ref{Serre} by the direct caluculations and using
the following result due to Bondal \cite{Bondalrep}.

\begin{pro}\label{ser}
  Let $\mathcal{T}$ be a $\C$-linear triangulated category  
  of finite type which admits some full exceptional collection $(E_1,\dots,E_l)$.
  For a object $X\in \mathcal{T}$, we define the object $L^{l-1}X$ by induction:
  $L^0 X =X$, and 
  \begin{equation}\label{D}
    L^{k+1}X \rightarrow \Hom_{\mathcal{T}}(E_{l-k},L^{k}X)\otimes E_{l-k}
    \rightarrow L^{k}X\rightarrow TL^{k+1}X,
  \end{equation}
  where $\Hom_{\mathcal{T}}(E,F)\otimes E$ indicates 
  $\oplus_{p\in \Z} (T^{-p}E)^{\oplus \dim_{\C} \Hom_{\mathcal{T}}(E,T^{p}F)}$.
  Then, $\mathcal{T}$ has the Serre functor $\mathcal{S}$ and the isomorphism
  $\mathcal{S}(E_i) \simeq T^{l-1}L^{l-1}(E_i)$ holds for all $i=1,\dots,l$.
\end{pro}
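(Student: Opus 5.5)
The plan follows Bondal's original strategy. First I construct $\mathcal{S}$ abstractly as a representing object of a dual Hom functor. Then I identify $\mathcal{S}(E_i)$ by testing it against the generators $E_1,\dots,E_l$.

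\emph{Step 1: existence of $\mathcal{S}$.} Since $\mathcal{T}$ is of finite type and generated by the exceptional collection, every cohomological functor $H:\mathcal{T}^{op}\to \mathrm{Vect}_{\C}$ of finite type is representable. I would prove this by induction on $l$ using the semiorthogonal decomposition $\mathcal{T}=\langle \langle E_1,\dots,E_{l-1}\rangle,\langle E_l\rangle\rangle$ provided by the right admissibility result (\cite[Theorem 3.2]{Bondalrep}), which is quoted earlier in the paper.
\begin{itemize}
\item For a single exceptional object $E$, the functor $H$ restricted to $\langle E\rangle$ is represented by $\bigoplus_p (T^{p}E)^{\oplus \dim H(T^{p}E)}$.
\item The inductive step glues the representing objects on the two pieces by a cone, using the gluing triangle.
\end{itemize}
Applying this to $H=\Hom_{\mathcal{T}}(-,X)$ and to $X\mapsto \Hom_{\mathcal{T}}(X,-)^{*}$ (the dual functor, which is cohomological of finite type by the finite-type hypothesis) gives an object $\mathcal{S}X$ with
\[ \Hom_{\mathcal{T}}(Y,\mathcal{S}X)\simeq \Hom_{\mathcal{T}}(X,Y)^{*}, \]
natural in $Y$. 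The Yoneda lemma then makes $\mathcal{S}$ a functor. The symmetric construction applied to covariant functors gives a quasi-inverse, so $\mathcal{S}$ is an equivalence.

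\emph{Step 2: the orthogonality of $L^{k}X$.} By construction the evaluation map $\Hom_{\mathcal{T}}(E_{l-k},L^{k}X)\otimes E_{l-k}\to L^{k}X$ induces an isomorphism on $\Hom_{\mathcal{T}}(E_{l-k},T^{p}(-))$ for all $p$. This uses exceptionality of $E_{l-k}$. The long exact sequence of the triangle (\ref{D}) then gives $\Hom_{\mathcal{T}}(E_{l-k},T^{p}L^{k+1}X)=0$ for all $p$. Moreover, $L^{k+1}X$ is built from $L^{k}X$ and copies of $E_{l-k}$, and $\Hom(E_j,T^{p}E_{l-k})=0$ for $j>l-k$. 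So by induction $L^{k}X$ lies in the right orthogonal of $\langle E_{l-k+1},\dots,E_l\rangle$. In particular $L^{l-1}X\in\langle E_2,\dots,E_l\rangle^{\perp}$.

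\emph{Step 3: identification of $\mathcal{S}(E_i)$.} I compare $Z:=T^{l-1}L^{l-1}E_i$ with $\mathcal{S}E_i$. The characterizing property of $\mathcal{S}E_i$ is
\[ \Hom_{\mathcal{T}}(E_j,T^{p}\mathcal{S}E_i)\simeq \Hom_{\mathcal{T}}(E_i,T^{-p}E_j)^{*} \quad\text{for all } j,p. \]
Since the $E_j$ generate, an object with this property is determined by a morphism inducing these isomorphisms, via the five lemma on triangles. The plan is to unwind (\ref{D}) step by step starting from $X=E_i$.
\begin{itemize}
\item For $l-k>i$ we have $\Hom(E_{l-k},T^{p}E_i)=0$, so the first $l-i-1$ steps should produce only shifts, up to the translations absorbed by $T^{l-1}$.
\item At the step $k=l-i$ (mutation through $E_i$ itself) the cone genuinely changes the object.
\item The remaining steps mutate through $E_{i-1},\dots,E_1$.
\end{itemize}
Tracking the long exact Hom sequences at each step should show that $\Hom(E_j,T^{p}Z)$ is dual to $\Hom(E_i,T^{-p}E_j)$, and that this holds compatibly with composition.

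The main obstacle is Step 3. Step 2 and the Hom computations give the right dimensions, but producing an actual morphism $Z\to\mathcal{S}E_i$ (or the reverse) that realizes the duality \emph{naturally} is harder. Naturality is exactly what the five-lemma argument needs. I would first build the morphism inductively along the triangles (\ref{D}), by lifting the canonical element of $\Hom(E_i,E_i)^{*}=\Hom(E_i,\mathcal{S}E_i)$ through each cone. If that bookkeeping becomes unwieldy, the fallback is to invoke Bondal's identification of $\mathcal{S}$ with the composition of left mutations of the whole collection, i.e.\ that the dual collection is carried by $\mathcal{S}$, and read off $\mathcal{S}(E_i)$ from that.
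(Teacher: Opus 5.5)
Your Step~3 cannot be completed, and the obstruction is not the naturality bookkeeping. The identification it targets is false for the statement as printed.

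Run your own bullets with $X=E_i$, $i\ge 2$. The mutations through $E_l,\dots,E_{i+1}$ are pure shifts, so $L^{l-i}E_i\simeq T^{-(l-i)}E_i$. The next step ($k=l-i\le l-2$) mutates through $E_i$ itself. Since $E_i$ is exceptional, $\Hom_{\mathcal{T}}(E_i,L^{l-i}E_i)\otimes E_i\simeq T^{-(l-i)}E_i$ and the evaluation map is an isomorphism. Hence $L^{l-i+1}E_i\simeq 0$ and $Z=0\neq\mathcal{S}E_i$. This is exactly the step where you wrote that the cone ``genuinely changes the object''.

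Your Step~2 already rules the claim out. It puts $Z$ in $\langle E_2,\dots,E_l\rangle^{\perp}$, whereas $\Hom_{\mathcal{T}}(E_i,\mathcal{S}E_i)\simeq\Hom_{\mathcal{T}}(E_i,E_i)^{*}\simeq\C$. For $i=1$ every step is a shift and $Z\simeq E_1$. That is not $\mathcal{S}E_1$ as soon as $\Hom_{\mathcal{T}}(E_1,T^{p}E_j)\neq 0$ for some $j>1$ and some $p$.

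The paper gives no proof of this statement; it is quoted from Bondal. The printed version is misindexed: in the proof of Corollary~\ref{Serre} the first mutation of $E_{2\mu}$ is through $E_{2\mu-1}$, and only the last object is used. What is true is the following: with $E_{l-k-1}$ in place of $E_{l-k}$ in (\ref{D}), one has $\mathcal{S}(E_l)\simeq T^{l-1}L^{l-1}(E_l)$.

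For that corrected statement you need no natural comparison map. Put $\mathcal{A}=\langle E_1,\dots,E_{l-1}\rangle$ and $M=T^{l-1}L^{l-1}E_l$.
\begin{enumerate}
\item Your Step~2, reindexed, gives $M\in\mathcal{A}^{\perp}$.
\item Applying the octahedral axiom inductively to the triangles (\ref{D}) gives a triangle $A\to E_l\to M\to TA$ with $A\in\mathcal{A}$. Hence $\Hom_{\mathcal{T}}(E_l,T^{p}M)\simeq\Hom_{\mathcal{T}}(E_l,T^{p}E_l)$ for all $p$.
\item The idea that replaces your five-lemma step: fullness, together with the admissibility result quoted in the paper, gives ${}^{\perp}\mathcal{A}=\langle E_l\rangle$. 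Serre duality then gives $\mathcal{A}^{\perp}=\mathcal{S}({}^{\perp}\mathcal{A})=\langle\mathcal{S}E_l\rangle$.
\item Every object of $\langle\mathcal{S}E_l\rangle$ is a finite sum $\bigoplus_{p}(T^{p}\mathcal{S}E_l)^{\oplus m_p}$. The Hom count against $E_l$ from step 2 forces $m_p=\delta_{p,0}$, i.e.\ $M\simeq\mathcal{S}E_l$.
\end{enumerate}
To get $\mathcal{S}E_i$ for $i<l$, apply this to a full exceptional collection ending in $E_i$, e.g.\ $(\mathcal{S}E_{i+1},\dots,\mathcal{S}E_l,E_1,\dots,E_i)$, not to $(E_1,\dots,E_l)$. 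Your Step~1 is the standard Bondal--Kapranov representability argument and is fine.
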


\begin{proof}[Proof of Corollary \ref{Serre}]
  We will apply Proposition \ref{ser} to the full strongly exceptional collection
  $\mathcal{E}_{a}$ described in Theorem \ref{mainthm}.
  We assume $a_i=i$ for all $i=1,\dots,\mu$ without loss of generality.
  It is enough to show $\tau^{\mu-1}E_{2\mu} \simeq T^{2\mu-1}L^{2\mu-1}(E_{2\mu})$.
  If $k=1$, the exact triangle (\ref{D}) is
  \[ L^{1}(E_{2\mu}) \rightarrow E_{2\mu-1}
  \rightarrow E_{2\mu} \rightarrow \tau^{\mu}\overline{F_{\{\mu\}}}.\]
  Then, we have $L^{1}(E_{2\mu})\simeq \overline{F_{\{\mu\}^c}}$.
  By the direct caluculations, we have
  \[\HMF^{gr}_{S}(\tilde{f_q})(T^{-p}\overline{E_{2\mu-k}},\overline{F_{\{\mu\}^c}})=0\]
  for any $p\in \Z$ and $k=2,...,\mu$.
  Therefore, the exact triangle (\ref{D}) is 
  \[ L^{k}(E_{2\mu}) \rightarrow 0
  \rightarrow L^{k-1}(E_{2\mu}) \rightarrow L^{k-1}(E_{2\mu})\]
  for $k=2,...,\mu$. If $k=\mu+1$, the exact triangle (\ref{D}) is
  \[ L^{\mu+1}(E_{2\mu}) \rightarrow T^{-(\mu-1)}E_{2\mu-1}
  \rightarrow L^{\mu}(E_{2\mu})\simeq T^{-(\mu-1)}\overline{F_{\{\mu\}^c}}
  \rightarrow T^{-(\mu-1)}\tau^{\mu-1}\overline{F_{\{\mu+1\}}}.\]
  Then, we have $L^{\mu+1}(E_{2\mu})\simeq T^{-(\mu-1)}\tau^{-1}\overline{F_{\{\mu+1\}^c}}$.
  By Lemma~\ref{IsubsetneqJ}, we have
  \[\HMF^{gr}_{S}(\tilde{f_q})(T^{-p}\overline{E_{2\mu-k}},\tau^{-1}\overline{F_{\{\mu+1\}^c}})=0\]
  for any $p\in \Z$ and $k=\mu+2,...,2\mu-1$.
  Therefore, the exact triangle (\ref{D}) is
  \[ L^{k}(E_{2\mu}) \rightarrow 0
  \rightarrow L^{k-1}(E_{2\mu}) \rightarrow L^{k-1}(E_{2\mu})\]
  for $k=\mu+2,...,2\mu-1$. Then, we have 
  \[T^{2\mu-1}L^{2\mu-1}(E_{2\mu})\simeq 
  T^{2\mu-1}T^{-(2\mu-3)}\tau^{-1}\overline{F_{\{\mu+1\}^c}}
  =\tau^{\mu}\overline{F_{\{\mu+1\}^c}}=\tau^{\mu-1}E_{2\mu}.\]
\end{proof}
\section{Proof of Theorem \ref{mainthm}}\label{prf}

First, we will show that the collection $\mathcal{E}_{a}$ described in Theorem~\ref{mainthm}
forms a strongly exceptional collection in $\HMF^{gr}_{S}(\tilde{f_q})$.

\begin{lem}
  The collection $\mathcal{E}_{a}$ forms an exceptional collection in $\HMF^{gr}_{S}(\tilde{f_q})$.
\end{lem}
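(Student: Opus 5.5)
The plan is to reduce everything to one explicit formula for morphism spaces between rank-$1$ factorizations, and then to check degrees. Each $E_k$ is exceptional by the Corollary following Lemma~\ref{shifthom}, since $\tau$ is an autoequivalence. It therefore remains to show
\[
\HMF^{gr}_{S}(\tilde{f_q})(E_m,T^pE_n)=0 \quad\text{for all } p\in\Z \text{ and } m>n .
\]
By Proposition~\ref{T2} (with $h=\mu+1$) and the isomorphism $T\overline{F_I}\simeq\tau^{|I|}\overline{F_{I^c}}$ from the proof of Lemma~\ref{IsupsetneqJ}, every $T^pE_n$ is of one of two forms:
\[
T^{2k}E_n=\tau^{k(\mu+1)+\epsilon}\,\overline{F_{I_{n'}}},\qquad
T^{2k+1}E_n=\tau^{k(\mu+1)+|I_{n'}|+\epsilon}\,\overline{F_{I_{n'}^c}}.
\]
Here $n'\in\{1,\dots,\mu\}$ and $\epsilon\in\{0,1\}$ record whether $E_n$ lies in the first or second half of $\mathcal{E}_a$. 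So everything reduces to computing $\HMF^{gr}_{S}(\tilde{f_q})(\overline{F_I},\tau^N\overline{F_J})$ for arbitrary $I,J$ and $N\in\Z$.

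\textbf{Step 1: a general formula.} Write $f_K:=\prod_{i\in K}(x+s_iq)$. A morphism $\overline{F_I}\to\tau^N\overline{F_J}$ is a pair $(g_0,g_1)$ of homogeneous elements with $g_1f_I=f_Jg_0$. Since the linear factors are pairwise coprime (genericity), this forces
\[
g_0=f_{I\setminus J}\,u,\qquad g_1=f_{J\setminus I}\,u
\]
for a homogeneous $u$ of degree $N-|I\setminus J|$ (in units of $2/h$). Next I would show that the homotopies
\[
(h_1f_I+f_{J^c}h_0,\; f_Jh_1+h_0f_{I^c})
\]
correspond exactly to replacing $u$ by $u+f_{I\cap J}\alpha+f_{(I\cup J)^c}\beta$. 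The result is
\[
\HMF^{gr}_{S}(\tilde{f_q})(\overline{F_I},\tau^N\overline{F_J})\;\simeq\;\bigl(S/(f_{I\cap J},f_{(I\cup J)^c})\bigr)_{N-|I\setminus J|}.
\]
The quotient is a complete intersection of two coprime forms in $\C[x,q]$ of degrees $|I\cap J|$ and $|(I\cup J)^c|$. Hence it vanishes identically if either set is empty, and otherwise it is nonzero exactly in degrees $0,\dots,|I\cap J|+|(I\cup J)^c|-2$. This recovers Lemmas~\ref{I=J}--\ref{IJempty} as special cases, which is a useful consistency check.

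\textbf{Step 2: case analysis.} Put $m'$ for the index with $E_m=\tau^{\delta}\overline{F_{I_{m'}}}$, $\delta\in\{0,1\}$. Since the sets $I_k$ are nested, every pair in question has the form $(I_{m'},I_{n'})$ or $(I_{m'},I_{n'}^c)$, and the nesting makes both $I\cap J$ and $(I\cup J)^c$ explicit.

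For odd $p$ with $m'\ge n'$, we have $I_{m'}\cup I_{n'}^c=\{1,\dots,\mu+1\}$, so the Hom space is zero. For odd $p$ with $m'<n'$ (possible only when $E_m$ is in the second half), $I_{m'}\cap I_{n'}^c=\emptyset$, so again the Hom space is zero.

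For even $p=2k$, the degree of $u$ is $k(\mu+1)+\epsilon-\delta-|I\setminus J|$. I will check that this never lies in the window $[0,\,|I\cap J|+|(I\cup J)^c|-2]$. For instance, with $m'>n'$ and $\epsilon=\delta$ the window forces $N\in[m'-n',\mu-1]$, which contains no multiple of $\mu+1$. With $m'\le n'$, $\delta=1$, $\epsilon=0$ it forces $k(\mu+1)-1\in[n'-m',\mu-1]$, which is also impossible. The remaining shift combinations are handled in the same way.

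\textbf{Expected main obstacle.} The hardest part is Step~1, namely proving that the homotopy relation is exactly the ideal $(f_{I\cap J},f_{(I\cup J)^c})$ and nothing larger. I would do this by dehomogenizing at $q=1$ and using Bézout, as in the proof of Lemma~\ref{I=J}. The degree bookkeeping in Step~2 is routine once the convention for the degrees of $b_i,\bar b_i$ is fixed consistently.
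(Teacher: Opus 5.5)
Your argument is correct. Its skeleton is the paper's: reduce $T^pE_n$ via $T^2=\tau^{\mu+1}$ and $T\overline{F_I}\simeq\tau^{|I|}\overline{F_{I^c}}$, then split by the parity of $p$ and by which halves of $\mathcal{E}_a$ contain $E_m,E_n$. The difference is the key input. The paper handles each case by quoting the spectrum bounds of Lemmas~\ref{I=J}--\ref{IJempty}, whose proofs are left as ``direct calculations''. You instead prove the single formula
\[
\HMF^{gr}_{S}(\tilde{f_q})(\overline{F_I},\tau^N\overline{F_J})\simeq\bigl(S/(f_{I\cap J},f_{(I\cup J)^c})\bigr)_{N-|I\setminus J|}.
\]

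This formula is right, and your ``main obstacle'' is only a two-line check. Write $J^c=(I\setminus J)\sqcup(I\cup J)^c$ and $I^c=(J\setminus I)\sqcup(I\cup J)^c$. Then a null-homotopic pair is exactly $(f_{I\setminus J}v,\,f_{J\setminus I}v)$ with $v=f_{I\cap J}h_1+f_{(I\cup J)^c}h_0$, and the degrees forced on $h_1,h_0$ are exactly those of your $\alpha,\beta$. B\'ezout, or the Hilbert series $(1-t^a)(1-t^b)/(1-t)^2$, is needed only for the degree range of the quotient.

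What your route buys goes beyond uniformity, although nestedness does kill all odd $p$ at once. Put $d:=\bigl||J|-|I|\bigr|$. For nested $I\neq J$, your formula puts the top of $\mathfrak{sp}(\overline{F_I},\overline{F_J})$ at $2(\mu-1)-d$. That is what Serre duality for $\tau^{\mu-1}$ forces, given the lower ends $d$. It is not the value $d+2(\mu-1)$ printed in Lemmas~\ref{IsubsetneqJ} and \ref{IsupsetneqJ}; for instance, with $\mu=3$, $I=\{1\}$, $J=\{1,2,3\}$ the spectrum is just $\{2\}$.

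This matters for the proof. With the printed bound, the paper's even-$p$ step does not exclude $p=2$ in its case (1) when $m-n\ge 2$, nor in its case (5). For example, with $\mu=3$, $(m,n)=(3,1)$ and $p=2$, the relevant phase is $6$, which the printed Lemma~\ref{IsupsetneqJ} even lists as $p_k$. Your sharp window does exclude these cases. So your sentence that the formula ``recovers'' those lemmas should say something different: it recovers Lemmas~\ref{I=J} and \ref{IJempty} and the lower ends of the other two, and corrects their upper ends.

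One small slip: for $m'<n'$ (with $\delta=1$, $\epsilon=0$) the window is $N\in[0,\mu-1-(n'-m')]$, not $[n'-m',\mu-1]$. Both intervals lie inside $[0,\mu-1]$, which contains no $k(\mu+1)-1$, so your conclusion stands.
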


\begin{proof}
We fix $m,n \in \{1,2,\dots,2\mu-1,2\mu\}$ satisfying $m>n$. 
We should check following five cases:
\begin{itemize}
  \item[(1)]  For any $m,n \in \{1,\dots,\mu\}$, we consider the case with
  \[\HMF^{gr}_{S}(\tilde{f_q})(E_m,T^{p}E_n)= 
  \HMF^{gr}_{S}(\tilde{f_q})(\overline{F_{I_m}},T^{p}\overline{F_{I_n}}).\]
  If $p= 2l +1$, $l \in \Z$, we have 
  \[\HMF^{gr}_{S}(\tilde{f_q})(\overline{F_{I_m}},T^{p}\overline{F_{I_n}})
  =\HMF^{gr}_{S}(\tilde{f_q})(\overline{F_{I_m}},\tau^{l(\mu+1)+|I_n|}\overline{F_{I^{c}_n}})=0\]
  by the direct caluculations of 
  $\HMF^{gr}_{S}(\tilde{f_q})(\overline{F_{I_m}},\tau^{|I_n|}\overline{F_{I^{c}_n}})$.

  If $p=2l$, $l\in \Z$, by Lemma \ref{IsupsetneqJ}, one has
  $\phi(\overline{F_{I_m}},\tau^{l(\mu+1)}\overline{F_{I_n}}) \notin 
  \mathfrak{sp}(\overline{F_{I_m}},\overline{F_{I_n}})$.
  Then, we have 
  \[\HMF^{gr}_{S}(\tilde{f_q})(\overline{F_{I_m}},T^{p}\overline{F_{I_n}})= 
  \HMF^{gr}_{S}(\tilde{f_q})(\overline{F_{I_m}},\tau^{l(\mu+1)}\overline{F_{I_n}})=0.\]
  
  \item[(2)] For any $m,n \in \{\mu+1,\dots,2\mu\}$, we consider the cese with
  \[\HMF^{gr}_{S}(\tilde{f_q})(E_m,T^{p}E_n)=
  \HMF^{gr}_{S}(\tilde{f_q})(\tau\overline{F_{I_m}},T^{p}\tau\overline{F_{I_n}}).\]
  By the isomorphism
  \[\HMF^{gr}_{S}(\tilde{f_q})(\tau\overline{F_{I_m}},T^{p}\tau\overline{F_{I_n}})
  \simeq \HMF^{gr}_{S}(\tilde{f_q})(\overline{F_{I_m}},T^{p}\overline{F_{I_n}}),\]
  we have $\HMF^{gr}_{S}(\tilde{f_q})(E_m,T^{p}E_n)=0$ for all $p \in \Z$.

  \item[(3)] For any $m \in \{\mu+1,\dots,2\mu\}$ and $n \in \{1,\dots,\mu\}$
  satisfying $m-\mu = n$, namely, $I_{m-\mu}=I_n$, we consider the case with
  \[\HMF^{gr}_{S}(\tilde{f_q})(E_m,T^{p}E_n)=
  \HMF^{gr}_{S}(\tilde{f_q})(\tau\overline{F_{I_n}},T^{p}\overline{F_{I_n}}).\]
  By the isomorphism
  \[\HMF^{gr}_{S}(\tilde{f_q})(\tau\overline{F_{I_n}},T^{p}\overline{F_{I_n}})
  \simeq \HMF^{gr}_{S}(\tilde{f_q})(\overline{F_{I_n}},T^{p}\tau^{-1}\overline{F_{I_n}})\]
  and Lemma \ref{I=J}, we have $\HMF^{gr}_{S}(\tilde{f_q})(E_m,T^{p}E_n)=0$ for all $p\in \Z$.

  \item[(4)] For any $m \in \{\mu+1,\dots,2\mu\}$ and $n \in \{1,\dots,\mu\}$
  satisfying $m-\mu < n$, namely, $I_{m-\mu} \subsetneq I_n$, we consider the case with
  \[\HMF^{gr}_{S}(\tilde{f_q})(E_m,T^{p}E_n)=
  \HMF^{gr}_{S}(\tilde{f_q})(\tau\overline{F_{I_{m-\mu}}},T^{p}\overline{F_{I_n}}).\]
  If $p= 2l +1$, $l \in \Z$, since the intersection of $I_{m-\mu}$ and $I^{c}_n$ is 
  empty, we have 
  \[\HMF^{gr}_{S}(\tilde{f_q})(\tau\overline{F_{I_{m-\mu}}},T^{p}\overline{F_{I_n}})
  \simeq \HMF^{gr}_{S}(\tilde{f_q})(\overline{F_{I_{m-\mu}}},
  \tau^{l(\mu+1)+|I_n|-1}\overline{F_{I^{c}_n}})=0\]
  by Lemma~\ref{IJempty}.

  If $p=2l$, $l\in \Z$, one obtains that
  $\phi(\overline{F_{I_{m-\mu}}},\tau^{l(\mu+1)-1}\overline{F_{I_n}}) \notin 
  \mathfrak{sp}(\overline{F_{I_{m-\mu}}},\overline{F_{I_n}})$ 
  by Lemma~\ref{IsubsetneqJ}.
  Then, we have 
  \[\HMF^{gr}_{S}(\tilde{f_q})(\tau\overline{F_{I_{m-\mu}}},T^{p}\overline{F_{I_n}})
  \simeq \HMF^{gr}_{S}(\tilde{f_q})(\overline{F_{I_{m-\mu}}},
  \tau^{l(\mu+1)-1}\overline{F_{I_n}})=0.\]
  
  \item[(5)] For any $m \in \{\mu+1,\dots,2\mu\}$ and $n \in \{1,\dots,\mu\}$
  satisfying $m-\mu > n$, namely, $I_n \subsetneq I_{m-\mu}$, we consider the case with
  \[\HMF^{gr}_{S}(\tilde{f_q})(E_m,T^{p}E_n)=
  \HMF^{gr}_{S}(\tilde{f_q})(\tau\overline{F_{I_{m-\mu}}},T^{p}\overline{F_{I_n}}).\]
  If $p= 2l +1$, $l \in \Z$, we have 
  \[\HMF^{gr}_{S}(\tilde{f_q})(\tau\overline{F_{I_{m-\mu}}},T^{p}\overline{F_{I_n}})
  \simeq \HMF^{gr}_{S}(\tilde{f_q})(\overline{F_{I_{m-\mu}}},
  \tau^{l(\mu+1)+|I_n|-1}\overline{F_{I^{c}_n}})=0\]
  by the direct caluculations of 
  $\HMF^{gr}_{S}(\tilde{f_q})
  (\overline{F_{I_{m-\mu}}},\tau^{l(\mu+1)+|I_n|-1}\overline{F_{I^{c}_n}})$.
  If $p=2l$, $l\in \Z$, we have 
  \[\HMF^{gr}_{S}(\tilde{f_q})(\tau\overline{F_{I_{m-\mu}}},T^{p}\overline{F_{I_n}})
  \simeq \HMF^{gr}_{S}(\tilde{f_q})(\overline{F_{I_{m-\mu}}},
  \tau^{l(\mu+1)-1}\overline{F_{I_n}})=0\]
  since one obtains that
  $\phi(\overline{F_{I_{m-\mu}}},\tau^{l(\mu+1)-1}\overline{F_{I_n}}) \notin 
  \mathfrak{sp}(\overline{F_{I_{m-\mu}}},\overline{F_{I_n}})$
  by Lemma~\ref{IsupsetneqJ}. 
\end{itemize}
\end{proof}

\begin{lem}
  The collection $\mathcal{E}_{a}$ forms a strongly exceptional collection in $\HMF^{gr}_{S}(\tilde{f_q})$.
\end{lem}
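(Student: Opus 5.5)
We must show that $\HMF^{gr}_{S}(\tilde{f_q})(E_m,T^{p}E_n)=0$ for all $p\neq 0$ and all $m,n$. The preceding lemma already handles $m>n$ for every $p$, and the corollary that each $\overline{F_I}$ is exceptional handles $m=n$. So it remains to treat $m<n$ with $p\neq 0$. The plan is to reduce every case to a Hom space between rank $1$ factorizations using two identities. The first is Proposition~\ref{T2}, $T^{2l}=\tau^{l(\mu+1)}$. The second is $T\overline{F_I}\simeq\tau^{|I|}\overline{F_{I^c}}$, which gives $T^{2l+1}\overline{F_I}\simeq\tau^{l(\mu+1)+|I|}\overline{F_{I^c}}$. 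We also use $\tau$-invariance of Hom to remove the extra $\tau$ in the second block. I would split into the same kinds of cases as before.

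\emph{Case $m<n\le\mu$, or $\mu<m<n$.} By $\tau$-invariance both reduce to $\HMF^{gr}_{S}(\tilde{f_q})(\overline{F_{I_m}},T^{p}\overline{F_{I_n}})$ with $I_m\subsetneq I_n$.
\begin{itemize}
\item[(a)] For $p$ odd, the target is a shift of $\overline{F_{I_n^c}}$. Since $I_m\cap I_n^c=\emptyset$, Lemma~\ref{IJempty} gives zero.
\item[(b)] For $p=2l$ with $l\neq 0$, the target is $\tau^{l(\mu+1)}\overline{F_{I_n}}$, whose phase difference with $\overline{F_{I_m}}$ is $|I_n|-|I_m|+2l(\mu+1)$. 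By Lemma~\ref{IsubsetneqJ} the spectrum lies in $[\,|I_n|-|I_m|,\ |I_n|-|I_m|+2(\mu-1)\,]$. Hence the only admissible shift is $l=0$, and the space vanishes.
\end{itemize}

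\emph{Case $m\le\mu<n$.} Put $J=I_{n-\mu}$. We must show $\HMF^{gr}_{S}(\tilde{f_q})(\overline{F_{I_m}},T^p\tau\overline{F_J})=0$ for $p\neq 0$.
\begin{itemize}
\item[(a)] For $p$ odd, the target is a $\tau$-shift of $\overline{F_{J^c}}$. When $J\supseteq I_m$ we have $I_m\cap J^c=\emptyset$, so Lemma~\ref{IJempty} kills it.
\item[(b)] For $p=2l$ with $J=I_m$ or $J\supsetneq I_m$, the target is $\tau^{l(\mu+1)+1}\overline{F_J}$. Lemma~\ref{I=J} and Lemma~\ref{IsubsetneqJ} confine the allowed shift $k$ to $0\le k\le\mu-1$. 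The value $k=l(\mu+1)+1$ lies in this window only for $l=0$ (for $\mu=1$ not even then), so $p=0$ is the only survivor.
\end{itemize}

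\emph{Remaining subcase: $J\subsetneq I_m$, with $d=|I_m|-|J|\ge1$.} This is where I expect the real obstacle. Here Lemma~\ref{IsupsetneqJ} only bounds the allowed $\tau$-shifts to the window $[d,\ d+\mu-1]$. For $d\ge 3$ this window can contain $\mu+2$, the shift coming from $p=2$, so the endpoint information in the spectrum is too coarse.
\begin{itemize}
\item[(a)] For even $p$, I would compute directly. A degree-$k$ morphism $\overline{F_I}\to\tau^k\overline{F_J}$ with $J\subset I$ is a pair $(g_0,g_1)$. The commutation relation forces $g_0=\prod_{i\in I\setminus J}(x+s_iq)\,g_1$. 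Modulo homotopy, $g_1$ then ranges over the graded ring $S/(f^J_0,f^I_1)$, where $f^J_0=\prod_{j\in J}(x+s_jq)$ and $f^I_1=\prod_{i\notin I}(x+s_iq)$. These two forms are coprime of degrees $|J|$ and $\mu+1-|I|$, so by genericity they form a regular sequence in $\C[x,q]$. The quotient is therefore a complete intersection with Hilbert series $(1-t^{|J|})(1-t^{\mu+1-|I|})/(1-t)^2$, nonzero exactly in degrees $0,\dots,\mu-1-d$. This sharp bound lets me check directly that the shift coming from $T^{2l}\tau$, with $l\neq0$, falls outside the nonzero range.
\item[(b)] For odd $p$, the target involves $J^c$, and $I_m\cap J^c=I_m\setminus J\neq\emptyset$. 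So Lemma~\ref{IJempty} does not apply. I would redo the same Koszul/Hilbert-series computation for $\HMF^{gr}_{S}(\tilde{f_q})(\overline{F_{I}},\tau^{k}\overline{F_{J^c}})$ and verify that the shift $l(\mu+1)+|J|+1$ lies outside the nonzero range for every $l$.
\end{itemize}
This careful degree bookkeeping, rather than appeal to the spectrum endpoints alone, is the step I expect to need the most care.
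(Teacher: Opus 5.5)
Your proposal is correct and uses the same case split as the paper's proof (its cases (1)'--(5)'), with the same reductions $T^{2}=\tau^{\mu+1}$ and $T\overline{F_I}\simeq\tau^{|I|}\overline{F_{I^c}}$ combined with Lemma~\ref{IJempty} and the spectrum lemmas. The one real difference is case (4)' ($I_{n-\mu}\subsetneq I_m$), which the paper dismisses as ``similar to case (5)''. You are right that the endpoint in Lemma~\ref{IsupsetneqJ} cannot exclude the shift $\mu+2$ when $d\ge 3$. Your identification of the morphism space with $(S/(f_0^J,f_1^I))_{k-d}$, which is nonzero only for $d\le k\le\mu-1$, closes this gap. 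That computation also shows the true top of these spectra is $2(\mu-1)-d$, so the upper endpoints in Lemmas~\ref{IsubsetneqJ} and~\ref{IsupsetneqJ} are only upper bounds, and that is all you use elsewhere. For odd $p$ the analogous quotient vanishes outright, because $I_m\cup J^{c}$ is the whole index set.
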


\begin{proof}
We fix $m,n \in \{1,2,\dots,2\mu-1,2\mu\}$ satisfying $m<n$ and $p \neq 0$.
We should check following five cases:

\begin{itemize}
  \item[(1)']  For any $m,n \in \{1,\dots,\mu\}$, namely, $I_{m} \subsetneq I_{n}$,
  we consider the case with
  \[\HMF^{gr}_{S}(\tilde{f_q})(E_m,T^{p}E_n)= 
  \HMF^{gr}_{S}(\tilde{f_q})(\overline{F_{I_m}},T^{p}\overline{F_{I_n}}).\]
  If $p= 2l +1$, $l \in \Z$, we have 
  \[\HMF^{gr}_{S}(\tilde{f_q})(\overline{F_{I_m}},T^{p}\overline{F_{I_n}})
  =\HMF^{gr}_{S}(\tilde{f_q})(\overline{F_{I_m}},\tau^{l(\mu+1)+|I_n|}\overline{F_{I^{c}_n}})=0\]
  by the direct caluculations of 
  $\HMF^{gr}_{S}(\tilde{f_q})(\overline{F_{I_m}},\tau^{|I_n|}\overline{F_{I^{c}_n}})$.

  If $p=2l$, $l\in \Z$, one obtains that
  $\phi(\overline{F_{I_m}},\tau^{l(\mu+1)}\overline{F_{I_n}}) \notin 
  \mathfrak{sp}(\overline{F_{I_m}},\overline{F_{I_n}})$ by Lemma \ref{IsubsetneqJ}.
  Then, we have 
  \[\HMF^{gr}_{S}(\tilde{f_q})(\overline{F_{I_m}},T^{p}\overline{F_{I_n}})= 
  \HMF^{gr}_{S}(\tilde{f_q})(\overline{F_{I_m}},\tau^{l(\mu+1)}\overline{F_{I_n}})=0.\]
  
  \item[(2)'] For any $m,n \in \{\mu+1,\dots,2\mu\}$, by the isomorphism
  \[\HMF^{gr}_{S}(\tilde{f_q})(E_i,T^{p}E_j)=
  \HMF^{gr}_{S}(\tilde{f_q})(\tau\overline{F_{I_m}},T^{p}\tau\overline{F_{I_n}})
  \simeq \HMF^{gr}_{S}(\tilde{f_q})(\overline{F_{I_m}},T^{p}\overline{F_{I_n}}),\]
  we have the same results as in the case~(1)'. 
  
  \item[(3)'] For any $m \in \{1,\dots,\mu\}$ and $n \in \{\mu+1,\dots,2\mu\}$
  satisfying $m = n - \mu$, namely, $I_{n-\mu}=I_m$, we consider the case with
  \[\HMF^{gr}_{S}(\tilde{f_q})(E_m,T^{p}E_n)=
  \HMF^{gr}_{S}(\tilde{f_q})(\overline{F_{I_m}},T^{p}\tau\overline{F_{I_m}}).\]
  By Lemma \ref{I=J} and Lemma \ref{shifthom}, we obtain that the RHS is zero
  for any $p\in \Z\setminus \{0\}$.
  
  \item[(4)'] For any $m \in \{1,\dots,\mu\}$ and $n \in \{\mu+1,\dots,2\mu\}$
  satisfying $m > n - \mu$, namely, $I_{n-\mu} \subsetneq I_m$, 
  we consider the case with
  \[\HMF^{gr}_{S}(\tilde{f_q})(E_m,T^{p}E_n)=
  \HMF^{gr}_{S}(\tilde{f_q})(\overline{F_{I_{m}}},T^{p}\tau\overline{F_{I_{n-\mu}}}).\]
  By using a similar caluculation as in the case (5), we obtain that the RHS is zero
  for any $p\in \Z\setminus \{0\}$.
  
  \item[(5)'] For any $m \in \{1,\dots,\mu\}$ and $n \in \{\mu+1,\dots,2\mu\}$
  satisfying $m < n - \mu$, namely, $I_{m} \subsetneq I_{n-\mu}$, 
  we consider the case with
  \[\HMF^{gr}_{S}(\tilde{f_q})(E_m,T^{p}E_n)=
  \HMF^{gr}_{S}(\tilde{f_q})(\overline{F_{I_{m}}},T^{p}\tau\overline{F_{I_{n-\mu}}}).\]
  By using a similar caluculation as in the case (4), we obtain that the RHS is zero
  for any $p\in \Z\setminus \{0\}$.
\end{itemize}
\end{proof}

\subsection{Grading shifts}

Finally we will employ our Theorem \ref{categorygenerating} to show that 
$\mathcal{E}_{a}$ is a full strongly exceptional collection.
For this purpose, we first show that that $\langle\mathcal{E}_{a}\rangle$
satisfies the condition (i) in Theorem \ref{categorygenerating}.

\begin{lem}\label{closed}
  The smallest full triangulated subcategory $\langle\mathcal{E}_{a}\rangle$
  contains all graded matrix factorizations of rank $1$.
\end{lem}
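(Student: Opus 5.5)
The plan is to show that $\langle\mathcal{E}_{a}\rangle$ contains $\tau^{n}\overline{F_{J}}$ for every $n\in\Z$ and every nonempty $J\subsetneq\{1,\dots,\mu+1\}$. This is done in three steps: a basic exact triangle, all grading shifts of the singletons $\overline{F_{\{j\}}}$, and then arbitrary $J$ by induction on $|J|$.

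\emph{Step 1: the basic triangle.} For $I\subsetneq J$ I take the morphism $\Phi_{I}^{J}=\bigl(1,\prod_{i\in J\setminus I}(x+s_iq)\bigr)$ of Lemma~\ref{IsubsetneqJ} and compute its mapping cone $C(\Phi_I^J)$ explicitly. It is a rank $2$ matrix factorization in which the entry $1$ of $c_0$ is a unit. I therefore expect to split off a trivial summand $\bigl(\mf{\tau^{n'}S}{1}{\tilde{f_q}}{\tau^{n'}S}\bigr)$ by elementary row and column operations over $S$. Keeping track of the degrees, what remains should give an exact triangle
\[ \overline{F_{I}} \overset{\Phi_I^J}{\longrightarrow} \overline{F_{J}} \longrightarrow \tau^{|I|}\overline{F_{J\setminus I}} \longrightarrow T\overline{F_{I}}. \]
This agrees with the triangle $E_{2\mu-1}\to E_{2\mu}\to\tau^{\mu}\overline{F_{\{\mu\}}}$ already used in the proof of Corollary~\ref{Serre}. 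Since $\langle\mathcal{E}_a\rangle$ is triangulated, whenever two of the three objects $\tau^{n}\overline{F_I}$, $\tau^n\overline{F_J}$, $\tau^{n+|I|}\overline{F_{J\setminus I}}$ lie in $\langle\mathcal{E}_a\rangle$, so does the third. Two further closure properties are available: $T\overline{F_I}\simeq\tau^{|I|}\overline{F_{I^c}}$ lets me pass between $I$ and $I^c$, and Proposition~\ref{T2} gives $T^2=\tau^{\mu+1}$.

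\emph{Step 2: all shifts of singletons.} Set $a_{\mu+1}$ to be the remaining index. Applying Step 1 to the chains $I_{k-1}\subset I_k$, inside both $E_1,\dots,E_\mu$ and $E_{\mu+1},\dots,E_{2\mu}$, yields $\tau^{k-1}\overline{F_{\{a_k\}}}$ and $\tau^{k}\overline{F_{\{a_k\}}}$ for $k=1,\dots,\mu$. Next, $TE_\mu$ and $TE_{2\mu}$ give $\tau^{\mu}\overline{F_{\{a_{\mu+1}\}}}$ and $\tau^{\mu+1}\overline{F_{\{a_{\mu+1}\}}}$. So every $a_k$, $1\le k\le\mu+1$, appears at the two consecutive shifts $k-1$ and $k$.

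I then propagate. From $\tau^{k}\overline{F_{\{a_k\}}}$ and $\tau^{k+1}\overline{F_{\{a_{k+1}\}}}$, the triangle for $\{a_k\}\subset\{a_k,a_{k+1}\}$ gives $\tau^{k}\overline{F_{\{a_k,a_{k+1}\}}}$. Combining this with $\tau^{k}\overline{F_{\{a_{k+1}\}}}$ in the triangle for $\{a_{k+1}\}\subset\{a_k,a_{k+1}\}$ gives $\tau^{k+1}\overline{F_{\{a_k\}}}$. Symmetrically, going down in $k$ lowers the shift. Iterating moves each singleton's window of available shifts up and down step by step. The range of shifts obtained this way has length at least $\mu+1$, and $T^{2}=\tau^{\mu+1}$ then gives $\tau^n\overline{F_{\{j\}}}$ for all $n\in\Z$ and all $j$.

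\emph{Step 3: general $J$.} Write $J=I\sqcup\{j\}$ and apply the triangle of Step 1 with $\tau^n\overline{F_I}$, already known by induction, and $\tau^{n+|I|}\overline{F_{\{j\}}}$ from Step 2. This gives $\tau^n\overline{F_J}$ for every $n$.

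The main obstacle is Step 1. One has to check carefully that the cone of $\Phi_I^J$ really reduces to $\tau^{|I|}\overline{F_{J\setminus I}}$, with exactly that grading shift, and that the triangle is oriented the way the propagation in Step 2 needs. The degree bookkeeping in Step 2 also has to be rechecked so that no shift is skipped.
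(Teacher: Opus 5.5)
Your proof is correct, but it is organized differently from the paper's. Both arguments rest on the same cone identification $C(\Phi_I^J)\simeq\tau^{|I|}\overline{F_{J\setminus I}}$ (your Step 1) and on $T^2=\tau^{\mu+1}$. One small slip in Step 1: with the paper's cone convention the unit $\phi_0=1$ sits in $c_1$, not $c_0$. So the trivial summand that splits off is $S\xrightarrow{\tilde{f_q}}\tau^{h}S\xrightarrow{1}S$, and nothing else changes.

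The paper runs through five claims that use the superset morphisms $\overline{\Phi}_I^J$ and the complement isomorphism $T\overline{F_I}\simeq\tau^{|I|}\overline{F_{I^c}}$ throughout. It first produces $\tau\overline{F_{\{k\}^c}}$, then $\tau^{\mu}\overline{F_{\{k\}}}$, then some intermediate shifts of singletons, then $\overline{F_{K^c}}$ for all $K$ with $|K|\le\mu-1$, and only at the end all shifts $\tau^{l}\overline{F_I}$, $0\le l\le\mu$. You use only the cones of $\Phi_I^J$, plus $T$ on $E_\mu$ and $E_{2\mu}$. You saturate all shifts of the singletons first, and then reach every $J$ by a one-line induction on $|J|$. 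This needs a single explicit cone computation, no superset morphisms from Lemma~\ref{IsupsetneqJ}, and less bookkeeping.

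The one place to tighten is the word ``iterating'' in Step 2. The rule you are actually using is the following. If $\tau^{m}\overline{F_{\{b\}}}$ and $\tau^{m+1}\overline{F_{\{b\}}}$ lie in $\langle\mathcal{E}_a\rangle$, and one of $\tau^{m}\overline{F_{\{a\}}}$, $\tau^{m+1}\overline{F_{\{a\}}}$ does, then so does the other. To see this, pass through $\tau^{m}\overline{F_{\{a,b\}}}$ using the two triangles for $\{a\}\subset\{a,b\}$ and $\{b\}\subset\{a,b\}$.

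From the start, $a_{m+1}$ has both shifts $m$ and $m+1$. Apply the rule to $a=a_k$ with donor $b=a_{m+1}$, for $m=k,k+1,\dots,\mu$ in increasing order and for $m=k-2,\dots,0$ in decreasing order. This gives every $a_k$ the shifts $0,\dots,\mu+1$, i.e.\ $\mu+2\ge h$ consecutive shifts. That is exactly what $T^{\pm2}=\tau^{\pm(\mu+1)}$ requires to reach all shifts.
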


\begin{proof}
  We assume $a_i=i$ for all $i=1,\dots,\mu$ without loss of generality. By Lemma \ref{IsubsetneqJ}
  and Lemma \ref{IsupsetneqJ}, there are exact triangles
  \[\overline{F_{I}} \xrightarrow{\Phi_{I}^{J}} 
  \overline{F_{J}} \rightarrow 
  C(\Phi_{I}^{J}) \rightarrow T\overline{F_{I}}\]
  and
  \[\overline{F_{I'}} \xrightarrow{\overline{\Phi}_{I'}^{J'}} 
  \tau^{|I'|-|J'|}\overline{F_{J'}} \rightarrow 
  C(\overline{\Phi}_{I'}^{J'}) \rightarrow T\overline{F_{I'}}\]
  for $I \subsetneq J$ and $I' \supsetneq J'$.
  Moreover, there exist natural isomorphisms
  \[ C(\Phi_{I}^{J}) \simeq \tau^{|I|}\overline{F_{I^c \cap J}}\ ,
  \ C(\overline{\Phi}_{I'}^{J'}) \simeq \tau^{|I'|-|J'|}\overline{F_{I^c \cup J}}.\]

\begin{cl}\label{I}
  The collection $\mathcal{E}_{a}$ generates 
  the objects $\tau\overline{F_{\{k\}^c}}$ for all $k=1,\dots,\mu+1$.
\end{cl}

  For all $k=2,\dots,\mu$, there is an exact triangle
  \[ \overline{F_{I_k}} \xrightarrow{\overline{\Phi}_{I_k}^{I_{k-1}}}
  \tau\overline{F_{I_{k-1}}} \xrightarrow{} 
  \tau\overline{F_{\{k\}^c}} \xrightarrow{} T\overline{F_{I_k}},\]
  Moreover, we have $\tau\overline{F_{\{1\}^c}} \simeq 
  T\overline{F_{I_1}}$ and 
  $\tau\overline{F_{\{\mu+1\}^c}} \simeq \tau\overline{F_{I_\mu}}$
  for $k=1$ and $k=\mu+1$ respectively.

\begin{cl}\label{II}
  The collection $\mathcal{E}_{a}$ generates 
  the objects $\tau^{\mu}\overline{F_{\{k\}}}$
  for all $k=1,\dots,\mu+1$.
\end{cl}
 
  For $k=1$, we have the following exact triangles
  \[\overline{F_{I_1}} \xrightarrow{\Phi_{I_1}^{I_{\mu}}} 
  \overline{F_{I_{\mu}}} \rightarrow 
  \tau\overline{F_{\{2,\dots,\mu\}}} \rightarrow T\overline{F_{I_1}},\]
  and
  \[\tau\overline{F_{\{2,\dots,\mu\}}} \xrightarrow{\tau\Phi_{\{2,\dots,\mu\}}^{I_{\mu}}} \tau\overline{F_{I_{\mu}}} \rightarrow
  \tau^{\mu}\overline{F_{\{1\}}} \rightarrow T\tau\overline{F_{\{2,\dots,\mu\}}}.\]
  For $k=2,\dots,\mu-1$, we have the following exact triangle
  \[\overline{F_{I_k}} \xrightarrow{\Phi_{I_k}^{I_{\mu}}} \overline{F_{I_{\mu}}} \rightarrow 
  \tau^{k}\overline{F_{\{k+1,\dots,\mu\}}} \rightarrow T\overline{F_{I_k}}.\]
  For $k=2,\dots,\mu$, we also have the following exact triangle
  \[\tau\overline{F_{I_{k-1}}} \xrightarrow{\tau\Phi_{I_{k-1}}^{I_{\mu}}} 
  \tau\overline{F_{I_{\mu}}} \rightarrow 
  \tau^{k}\overline{F_{\{k,\dots,\mu\}}} \rightarrow T\tau\overline{F_{I_{k-1}}}.\]
  Then, we have the exact triangles 
  \[\tau^{k}\overline{F_{\{k+1,\dots,\mu\}}} \rightarrow \tau^{k}\overline{F_{\{k,\dots,\mu\}}}
  \rightarrow \tau^{\mu}\overline{F_{\{k\}}} \rightarrow T\tau^{k}\overline{F_{\{k+1,\dots,\mu\}}}\]
  for $k=2,\dots,\mu-1$. Moreover, we have
  $\tau^{\mu}\overline{F_{\{\mu+1\}}} \simeq T\overline{F_{I_{\mu}}}$
  for $k=\mu+1$. 
  Hence Claim~\ref{II} follows.

\begin{cl}\label{III}
  The collection $\mathcal{E}_{a}$ generates 
  the objects $\tau^{m}\overline{F_{\{1\}}},\dots,\tau^{m}\overline{F_{\{m+1\}}}$
  for $m=1,\dots,\mu-1$.
\end{cl}

  By Claim~\ref{I}, we have the isomorphism
  $\overline{F_{\{l\}}} \simeq T^{-1}\tau\overline{F_{\{l\}^c}}$. Hence 
  $\langle\mathcal{E}_{a}\rangle$ contains $\overline{F_{\{l\}}}$ for all $l=1,\dots,\mu+1$. 
  Then, there is an exact triangle
  \[\overline{F_{\{l\}}} \xrightarrow{\Phi_{\{l\}}^{I_{m}}} \overline{F_{I_{m}}} 
  \rightarrow \tau\overline{F_{I_{m}\setminus \{l\}}} \rightarrow T\overline{F_{\{l\}}}\]
  for all $l=1,\dots,m$ and
  \[\overline{F^{\{l'\}}} \xrightarrow{\Phi_{\{l'\}}^{I_{m+1}}} \overline{F_{I_{m+1}}} \rightarrow 
  \tau\overline{F_{I_{m+1}\setminus \{l'\}}} \rightarrow T\overline{F_{\{l'\}}}\]
  for all $l'=1,\dots,m+1$. If $l'=1,\dots,m$ and $l=l'$, then we have the following
  exact triangle
  \[\tau\overline{F_{I_{m}\setminus \{l\}}} \rightarrow
  \tau\overline{F_{I_{m+1}\setminus \{l'\}}} \rightarrow  
  \tau^{m}\overline{F_{\{m+1\}}} \rightarrow T\tau\overline{F_{I_{m}\setminus \{l\}}}.\]
  If $l'=m+1$, then we have the following exact triangle
  \[\tau\overline{F_{I_{m}\setminus \{l\}}} \rightarrow
  \tau\overline{F_{I_{m}}} \rightarrow \tau^{m}\overline{F_{\{l\}}} 
  \rightarrow T\tau\overline{F_{I_{m}\setminus \{l\}}}\]
  for all $l=1,\dots,m$. Therefore Claim~\ref{III} follows.

\begin{cl}\label{IV}
  The collection $\mathcal{E}_{a}$ generates
  the objects $\overline{F_{\{k_1,\dots,k_l\}^c}}$
  for $l=1,...,\mu-1$ and $k_1 \neq \cdots \neq k_{l}$.
\end{cl}

  By Claim~\ref{II}, we have 
  $\langle\mathcal{E}_{a}\rangle$ contains $\overline{F_{\{k_1\}^c}}
  \simeq T^{-1}\tau^{\mu}\overline{F_{\{k_1\}}}$ for all $k_1=1,\dots,\mu+1$.
  We will show Claim~\ref{IV} for the case $l=2$.
  Considering the case $m=\mu-1$ in Claim~\ref{III}, we have
  $\langle\mathcal{E}_{a}\rangle$ contains the objects
  $\tau^{\mu-1}\overline{F_{\{k\}}}$ for $k=1,\dots,\mu$. 
  In addition, we obtain the object 
  $\tau^{\mu-1}\overline{F_{\{\mu+1\}}}$ by the following exact triangle
  \[\overline{F_{I_{\mu-1}}} \rightarrow \overline{F_{\{\mu\}^c}} 
  \simeq T^{-1}\tau^{\mu}\overline{F_{\{\mu\}}} \rightarrow 
  \tau^{\mu-1}\overline{F_{\{\mu+1\}}} \rightarrow T\overline{F_{I_{\mu-1}}}.\]
  Therefore, we have the following exact triangle 
  \[\overline{F_{\{k_1\}^c}} \rightarrow \tau^{\mu-1}\overline{F_{\{k_2\}}} 
  \rightarrow \tau^{\mu-1}\overline{F_{\{k_1,k_2\}}} \rightarrow T\overline{F_{\{k_1\}^c}}\]
  for $k_1,k_2 \in \{1,\dots,\mu+1\}$ satisfying $k_1 \neq k_2$.
  Hence, the collection $\mathcal{E}_{a}$ generates $\overline{F_{\{k_1,k_2\}^c}}$ 
  for all $k_1 \neq k_2$ by the isomorphism 
  $\overline{F_{\{k_1,k_2\}^c}} \simeq T^{-1}\tau^{\mu-1}\overline{F_{\{k_1,k_2\}}}$.
  Repeating inductively this argument,
  we have that the collection $\mathcal{E}_{a}$ generates $\overline{F_{\{k_1,k_2,\dots,k_l\}^c}}$ 
  for all $l=1,...,\mu-1$ and $k_1 \neq k_2 \neq \cdots \neq k_{l}$. 

\begin{cl}\label{V}
  The collection $\mathcal{E}_{a}$ generates
  the objects $\tau^{l}\overline{F_{I}}$ for any subset $I$ and $l=0,1,\dots,\mu$.
\end{cl}

  By Claim~\ref{IV}, for any subset $I$, we have the exact triangle
  $$ \overline{F_{\{k_1,k_2,\dots,k_l\}}} \rightarrow \overline{F_{I\cup \{k_1,k_2,\dots,k_l\}}} 
  \rightarrow \tau^{l}\overline{F_{I}} \rightarrow T\overline{F_{\{k_1,k_2,\dots,k_l\}}}, $$
  where $k_1,\dots,k_l$ are some distinct elements of $I^c$ and $0< l \le \mu-|I|$.
  On the other hand, substituting $I^c$ for $I$, we have
  $\langle\mathcal{E}_{a}\rangle$ contains $\tau^{l'}\overline{F_{I^c}}$
  for $0< l' \le \mu-|I^c|=|I|-1$. 
  Moreover, we have $\langle\mathcal{E}_{a}\rangle$ contains $\overline{F_{I}}$ and 
  $T\overline{F_{I^c}}\simeq \tau^{|I^c|}\overline{F_{I}}$.
  By the isomorphism
  $T\tau^{l'}\overline{F_{I^c}} \simeq \tau^{l'+|I^c|}\overline{F_{I}}$,
  the collection $\mathcal{E}_{a}$ generates the objects $\tau^{l}\overline{F_{I}}$
  for all $l=0,\dots,\mu$.

Summarizing, we have obtained the objects $\tau^{l}\overline{F_{I}}$ 
for any subset $I$ and $l=0,1,\dots,\mu$.
By Proposition~\ref{T2}, this concludes the proof of lemma.
\end{proof}

\subsection{$\langle\mathcal{E}_{a}\rangle$ contains $(R/\mathfrak{m})^{stab}$}

Next, we will show that $\langle\mathcal{E}_{a}\rangle$ satisfies the condition (ii) 
in Theorem \ref{categorygenerating}.
By Lemma~\ref{closed}, the object $(R/\mathfrak{m})^{stab}$ is obtained by taking
the mapping cone $\overline{F_{\{i\}}}\rightarrow \tau\overline{F_{\{i\}}}$ 
and grading shift as follows.
The multiplication by $q$ on $R/\mathfrak{p_i}$ 
in $\gr\text{-}{R}$ induces the following exact triangle
\[(R/(x+s_iq))^{stab} \xrightarrow{(q,q)} \tau(R/(x+s_iq))^{stab} 
\rightarrow C((q,q)) \rightarrow T(R/(x+s_iq))^{stab}\]
in $\HMF^{gr}_{S}(\tilde{f_q})$. This is equivalent to
\[ \overline{F_{\{i\}}}\xrightarrow{(q,q)} \tau\overline{F_{\{i\}}} 
\rightarrow C((q,q)) \rightarrow T\overline{F_{\{i\}}}\]
in $\langle\mathcal{E}_{a}\rangle$. By the definition of mapping cone,
the matrix factorization $C((q,q))$ has the form
\[  \Bigl(\mf{\tau S \oplus \tau S}{f_0}{f_1} 
  {\tau^{h}S \oplus \tau^{2}S}\Bigr),\]
\[ f_0=\begin{pmatrix}
      -\prod_{j \neq i}(x+s_j q)  &  0 \\
      q  &  x +s_i q\\
      \end{pmatrix},
  f_1=\begin{pmatrix}
      -(x +s_i q)   &  0\\
      q             &  \prod_{j \neq i}(x+s_j q) \\
\end{pmatrix}.\]
By the direct caluculations, one obtains that $(R/\mathfrak{m})^{stab} \simeq T^{-1}\tau^{h-2}C((q,q))$
Since $\langle\mathcal{E}_{a}\rangle$ is closed under grading shift, the full subcategory  
$\langle\mathcal{E}_{a}\rangle$ contains $(R/\mathfrak{m})^{stab}$.

This completes the proof of Theorem~\ref{mainthm}.
\bibliographystyle{plain}
\bibliography{reference}
\end{document}